\DeclareMathOperator{\supp}{supp}
\DeclareMathOperator{\lip}{Lip}
\DeclareMathOperator{\LIP}{LIP}
\newtheorem{theorem}{Theorem}[section]
\newtheorem{lemma}[theorem]{Lemma}
\newtheorem{proposition}[theorem]{Proposition}
\newtheorem{problem}[theorem]{Problem}
\newtheorem{corollary}[theorem]{Corollary}
\theoremstyle{definition}
\theoremstyle{remark}
\newtheorem{remark}[theorem]{Remark}
\numberwithin{equation}{section}
\title[Continuous operators from spaces of Lipschitz functions]{Continuous operators from spaces of Lipschitz functions}
\author[C. Bargetz]{Christian Bargetz}
\address{Universität Innsbruck, Department of Mathematics, Innsbruck, Austria.}
\email{christian.bargetz@uibk.ac.at}
\author[J.\ K\k{a}kol]{Jerzy K\k{a}kol}
\address{Faculty of Mathematics and Computer Science, Adam Mickiewicz University, Pozna\'n, Poland, and Institute of Mathematics, Czech Academy of Sciences, Prague, Czech Republic.}
\email{kakol@amu.edu.pl}
\author[D.\ Sobota]{Damian Sobota}
\address{Kurt G\"odel Research Center, Department of Mathematics, Vienna University, Vienna, Austria.}
\email{damian.sobota@univie.ac.at}
\keywords{spaces of Lipschitz functions, Lipschitz-free spaces, weak topologies, continuous operators, continuous surjections, Grothendieck spaces, density}
\subjclass[2020]{Primary: 46E15, 26A16. Secondary: 46E10, 46B80.}
\begin{document}

\begin{abstract}
  We study the existence of continuous (linear) operators from the Banach spaces $\lip_0(M)$ of Lipschitz functions on infinite metric spaces $M$ vanishing at a distinguished point and from their predual spaces $\mathcal{F}(M)$ onto certain Banach spaces, including $C(K)$-spaces and the spaces $c_0$ and $\ell_1$. For pairs of spaces $\lip_0(M)$ and $C(K)$ we prove that if they are endowed with topologies weaker than the norm topology, then usually no continuous (linear or not) surjection exists between those spaces. It is also showed that if a metric space $M$ contains a bilipschitz copy of the unit sphere $S_{c_0}$ of the space $c_0$, then $\lip_0(M)$ admits a continuous  operator onto $\ell_1$ and hence onto $c_0$. Using this, we provide several conditions for a space $M$ implying that $\lip_0(M)$ is not a Grothendieck space. Finally, we obtain a new characterization of the Schur property for Lipschitz-free spaces: a space $\mathcal{F}(M)$ has the Schur property if and only if for every complete discrete metric space $N$ with cardinality $d(M)$ the spaces $\mathcal{F}(M)$ and $\mathcal{F}(N)$ are weakly sequentially homeomorphic. 
\end{abstract}

\maketitle

\section{Introduction}

The goal of the present paper is to attempt a systematic study of the existence of continuous (linear) operators from the Banach space $\lip_0(M)$ of Lipschitz functions on a metric space $M$ vanishing at a distinguished point and from its predual space $\mathcal{F}(M)$, the Lipschitz-free Banach space of $M$, onto certain Banach spaces, including $C(K)$-spaces and the space $\ell_1$. This general line of  research is motivated by a series of results for specific metric spaces $M$, usually belonging to well-known classes of Banach spaces, see, e.g., the monograph~\cite{Weaver} and the articles \cite{Aliaga}, \cite{C-C-D}, \cite{LeandroGuzman}, \cite{Doucha}, \cite{GK}, or \cite{Hajek}.

The first main topic of this paper is closely related to the following general question.

\begin{problem}
  For which metric spaces $M$ and $N$, do there exist surjective continuous (not necessarily linear) mappings $(E,\tau)\rightarrow (F,\sigma)$, where $E\in \{C(M), \lip_0(M)\}$, $F\in \{C(N), \lip_0(N))\}$, and  $\tau,\sigma$ are any of the following topologies: norm topology, weak topology, compact-open topology, or pointwise topology?
\end{problem}

To deal with this problem, in Section \ref{sec:density} we compute the density characters of spaces of Lipschitz functions endowed with various topologies. For the norm topology, using Toru\'nczyk's classical theorem (\cite{Tor}), we observe in Corollary \ref{see} that, given two metric spaces $M$ and $N$, the Banach spaces $\lip_0(M)$ and $\lip_0(N)$ are homeomorphic if and only if $2^{d(M)}=2^{d(N)}$, where $d(X)$ denotes the density of a topological space $X$ (see Section \ref{sec:prelim} for relevant definitions and notations). The density of spaces $\lip_0(M)$ endowed with topologies significantly weaker than the norm topology or even the weak topology are studied in Theorem \ref{theorem:l_dens_tp_tk} and its several corollaries, in particular Theorem \ref{cor:lip_lip0_dens_tp_tk}. We show, among others, that if for a given metric space $M$ the space $\lip_0(M)$ is endowed with the topology $\tau$ contained between the pointwise topology and the compact-open topology, then for the densities we have $d(C(M),\tau)=d(\lip_0(M),\tau)\le d(M)$. This provides a metric extension for Noble's theorem \cite{Nob71}, cf. also \cite{GFLP}, as well as relates to results presented in the recent paper \cite{C-C-V} which concern the density character of the weak* topology on spaces of Lipschitz functions.

\medskip

Section \ref{sec:operators} significantly refers to the title of this work. The  motivating result for the section is Corollary~\ref{cor:lip0m_ck} stating that for an infinite metric space $M$ there always exist a compact space $K$ with density $d(K)=d(M)$ and weight $w(K)=2^{d(M)}$, and a continuous linear surjection from the Banach space $\lip_0(M)$ onto the Banach space $C(K)$. It appears that if we weaken the norm topology of $\lip_0(M)$ to the pointwise topology, then such a surjection does not exist. Namely, in Theorem~\ref{theorem_lipp_onto_lipw} we prove that if $\lip_0(M)$ is endowed with the pointwise topology $\tau_p$, then there is no continuous (linear or not) surjection from the space $(\lip_0(M),\tau_p)$ onto any space of the form $(\lip_0(N),\tau)$ or $(C(X),\tau')$ with Tychonoff $X$, where the topologies $\tau$ and $\tau'$ contain, respectively, the weak topology of the Banach space $\lip_0(N)$ and the pointwise topology of $C(X)$. The latter in particular applies to the case of a compact space $X$ and the supremum norm topology of the Banach space $C(X)$.

In Theorem \ref{theorem:cpm_lip0n} we consider the reverse situation, which also appears to be negative. Namely, we show that for every infinite Tychonoff space $X$ and infinite metric space $N$ for which the inequality $w(X)<2^{d(N)}$ holds, there is no continuous (linear or not) surjection from $(C(X),\tau)$ onto $(\lip_0(N),\tau')$, where $\tau$ is a topology contained between the pointwise topology and the compact-open topology on $C(X)$ and a topology $\tau'$ contains the weak topology of the Banach space $\lip_0(N)$. The assumption $w(X)<2^{d(N)}$ cannot be removed nor even relaxed to $w(X)\le2^{d(N)}$, which is justified by the well-known isomorphisms $C(\beta\mathbb{N})\simeq\ell_\infty\simeq\lip_0([0,1])$, where $\beta\mathbb{N}$ is the \v{C}ech--Stone compactification of the set $\mathbb{N}$ of natural numbers. 
On the other hand, it should be pointed out, that  according to C\'{u}th, Doucha, and Wojtaszczyk, see~\cite[proof of Theorem 4.1]{Doucha}, there is no continuous linear surjection $T\colon C(K)\rightarrow\lip_0([0,1]^2)$ for any compact space $K$.  This in particular yields that for any metric space $M$ containing the square $[0,1]^2$ there is no continuous linear surjection $T\colon C(K)\rightarrow\lip_0(M)$ for any compact space $K$.

\medskip

In Subsection \ref{sec:lipschitz_free} we study the existence of continuous mappings from or onto Lipschitz-free Banach spaces. In Proposition \ref{prop:fm_lip0n} we observe that, again by virtue of the Toru\'nczyk theorem, for an infinite metric space $M$  and a Banach space $E$ there exists a continuous (linear) surjection from the Lipschitz-free space $\mathcal{F}(M)$ onto $E$ if and only if $\mathcal{F}(M)$ contains a subset homeomorphic to $E$ if and only if $d(M)\ge d(E)$. Further, motivated by a recently studied open problem due to Krupski \cite{Krupski-1} (see also \cite{K-M}), asking whether the  spaces  $C_p(K)$ and $C(L)_w$ can be homeomorphic for infinite compact spaces $K$ and $L$, where, for a Tychonoff space $X$ and a Banach space $E$, $C_p(X)$ denotes the space $C(X)$ endowed with the pointwise topology and $E_w$ denotes $E$ endowed with the weak topology, we discuss in Theorem~\ref{theorem_cpm_homeo_fmw} possible natural conditions put on two metric spaces $M$ and $N$ under which there is no homeomorphism between spaces $C_p(M)$ and  $\mathcal{F}(N)_w$.

One of the conditions considered in Theorem \ref{theorem_cpm_homeo_fmw} is that the space $\mathcal{F}(N)$ has the Schur property. Recall that the a Banach space $E$ has the \emph{Schur property} if every weakly convergent sequence in $E$ is norm convergent. It is a classical result that the Banach space $\ell_1$ has the Schur property. The Schur property of Lipschitz-free spaces has been thoroughly studied by Aliaga \textit{et al.} in \cite{AGPP} and~\cite{Aliaga}, where it was proved, among others, that, for a given metric space $M$, the space $\mathcal{F}(M)$ has the Schur property if and only if the Banach space $L_1([0,1])$ does not isomorphically embed into $\mathcal{F}(M)$ if and only if the completion $\tilde{M}$ of $M$ is purely $1$-unrectifiable, that is, $\tilde{M}$ does not contain bilipschitz copies of subsets of the real line $\mathbb{R}$ of non-zero Lebesgue measure.

We provide a new characterization of the Schur property of Lipschitz-free spaces, more in the spirit of the title of our paper. Namely, in Theorem \ref{theorem_schur_discrete} we prove that for an infinite  metric space  $M$  the space $\mathcal{F}(M)$ has the Schur property if and only if for every complete discrete metric space $N$ with cardinality $d(M)$ the spaces $\mathcal{F}(M)_w$ and $\mathcal{F}(N)_w$ are sequentially homeomorphic. The main tool used for proving this result is Theorem \ref{lee} stating that a Banach space $E$ has the Schur property if and only if the spaces $E$ and $E_w$ are sequentially homeomorphic. We also observe in Theorem \ref{theorem:weak_seq_homeo} that, for every two Banach spaces $E$ and $F$ both having the Schur property, the spaces $E_w$ and $F_w$ are sequentially homeomorphic if and only if $d(E)=d(F)$.

\medskip

Section \ref{sec:ell_1} is devoted to the second main problem of this paper, motivated by the fact that for every infinite metric space $M$, the space $\lip_0(M)$ contains a complemented copy of the Banach space $\ell_\infty(d(M))$ (by H\'{a}jek--Novotn\'{y}'s result \cite{Hajek}, see also Theorem \ref{cor1}), hence there is always a continuous linear surjection $T\colon\lip_0(M)\rightarrow\ell_2(2^{d(M)})$ and thus a continuous linear surjection $S\colon\lip_0(M)\rightarrow\ell_2$, see Corollary~\ref{separable}.

\begin{problem}
For which metric spaces $M$, in particular for which Banach spaces $M$, do there exist surjective continuous operators from the Banach spaces $\lip_0(M)$ onto the space $\ell_1$?
\end{problem}

The main result of Section \ref{sec:ell_1}, Theorem~\ref{thm:BiLipCK}, asserts that if $E$ is a separable Banach space which is an absolute Lipschitz retract and contains an isomorphic copy of the Banach space $c_0$, then for every metric space $M$ containing a bilipschitz copy of the unit sphere $S_{E}$ of $E$ the space $\lip_0(M)$ admits a continuous  operator onto $\ell_1$. Hence, the conclusion holds in particular
for any metric space $M$ containing a bilipschitz copy of the unit sphere $S_{c_0}$ of $c_0$, see Corollary~\ref{cor:bilipschitz_c0}.

Using Corollary \ref{cor:bilipschitz_c0} and the theorem of Dalet \cite{Dalet2} stating that the space $\lip_0(\ell_1)$ contains a complemented copy of $\ell_1$, we provide in Theorem \ref{theorem:examples_onto_ell1} an extensive list of examples of Banach spaces $E$ for which the spaces $\lip_0(E)$ admit continuous  operators onto $\ell_1$. To this list belong, e.g., the following Banach spaces:
\begin{itemize}
\item $C(K)$-spaces and so $L_\infty(\mu)$-spaces, 
\item $L_1(\mu)$-spaces,
\item $\lip_0(M)$-spaces,
\item $\mathcal{F}(M)$-spaces.
\end{itemize}

Containing bilipschitz copies of the unit sphere of an infinite-dimensional Banach space is not necessary for a metric space $M$ to admit a continuous  operator from its Banach space $\lip_0(M)$ onto $\ell_1$, as was already observed in \cite[Remark 3.6]{C-C-D} for nets in $c_0$. Nevertheless, we show that there exists an infinite-dimensional arcwise-connected $\sigma$-compact metric space $M$ not containing the unit ball of any infinite-dimensional normed space but such that $\lip_0(M)$ admits a continuous operator onto $\ell_1$, see Proposition~\ref{prop:NoGrothendieckNoc0Ball}.

\medskip

The last research part of the paper, provided in Subsection \ref{sec:grothendieck}, concerns the following problem.

\begin{problem}
For which metric spaces $M$, in particular for which Banach spaces $M$, do the spaces $\lip_0(M)$ have the Grothendieck property?
\end{problem}

Recall that a Banach space $E$ is called \emph{Grothendieck} (or is said to have the \emph{Grothendieck property}) if every weak* convergent sequence in the dual space $E^*$  converges weakly. Typical examples of Grothendieck spaces are reflexive spaces, the space $\ell_\infty$ or more generally spaces $C(K)$ for $K$ extremally disconnected (Grothendieck \cite{Gro53}), the space $H^\infty$ of all bounded analytic functions on the unit disk (Bourgain \cite{Bou83}), and von Neumann algebras (Pfitzner \cite{Pfi94}). On the other hand, the spaces $\ell_1$ and $c_0$, or more generally spaces $C(K)$ for $K$ metric, are not Grothendieck. We refer the reader to \cite{Kania} for an extensive survey on Grothendieck Banach spaces. Notice that no space $\mathcal{F}(M)$  is a Grothendieck space since it contains a complemented copy of $\ell_{1}$ (see Corollary~\ref{hano_ell1}) and the Grothendieck property is preserved by continuous linear surjections.

It seems that apart from $\lip_0([0,1])\simeq\ell_\infty$ there is no known example of a Banach space  $\lip_0(M)$ which is a Grothendieck space. Recall that for the class of $C(K)$-spaces we have the following result, due to Cembranos \cite{Cem84} and R\"{a}biger \cite{Rab}: for every compact space $K$, the Banach space $C(K)$ is Grothendieck if and only if $C(K)$ does not contain any complemented copy of  $c_0$. This characterization cannot work for spaces $\lip_0(M)$, since for each metric space $M$ the space $\lip_0(M)$ is isometrically isomorphic to the dual Banach space $\mathcal{F}(M)^*$, hence it does not  contain any complemented copy of $c_0$ (see e.g. \cite[Theorem 2.4.15]{DDLS}), even though, by Theorem~\ref{cor1}, $\lip_0(M)$ always contains \textit{some} copy of $c_{0}$. It follows that to decide that a given space $\lip_0(M)$ is not Grothendieck we have to look for continuous linear surjections from $\lip_0(M)$ onto $c_0$ or prove that the dual space $\lip_0(M)^*$ is not weakly sequentially complete; see the discussion at the beginning of Section \ref{sec:grothendieck} for more details.

Based on results from the beginning of Section \ref{sec:ell_1}, in Subsection \ref{sec:grothendieck} we provide a number of conditions for metric spaces $M$ implying that their spaces $\lip_0(M)$ are not Grothendieck. For example, since the Grothendieck property is preserved by continuous linear surjections and the space  $\lip_0(E)$ contains a complemented copy of the dual $E^*$ for each  Banach space $E$, the space  $\lip_0(E)$ is not  Grothendieck provided that $E^*$ is not Grothendieck. This observation yields a class of Banach spaces $E$ for which $\lip_0(E)$ is not Grothendieck, e.g. all non-reflexive spaces with separable duals belong to this class, see Proposition \ref{prop:dual_not_groth}. 

An immediate consequence of Theorem~\ref{thm:BiLipCK} is that if $E$ is a separable Banach space which is an absolute Lipschitz retract and contains an isomorphic copy of the Banach space $c_0$, then for every metric space $M$ containing a bilipschitz copy of the unit sphere $S_{E}$ of $E$ the space $\lip_0(M)$ is not Grothendieck. Thus, if a metric space $M$ contains a bilipschitz copy of the unit sphere $S_{c_0}$ of $c_0$, then $\lip_0(M)$ is not Grothendieck; see Theorem \ref{Fo} and Corollary \ref{Gro}. Moreover, using the aforementioned Theorem \ref{theorem:examples_onto_ell1}, we get that if a Banach space $E$ is a $C(K)$-space, $L_1(\mu)$-space, $\lip_0(M)$-space, or $\mathcal{F}(M)$-space, then $\lip_0(E)$ is also not Grothendieck, see Corollary \ref{cor:many_non_grothendieck_spaces}.

\medskip

We complete the paper by providing several open questions in Section \ref{sec:problems}.

\section{Preliminaries\label{sec:prelim}}

Let $M$ be a metric space. If not stated otherwise, $\rho$ always denotes \textit{the} metric of $M$. For $x\in M$ and $\emptyset\neq A\subseteq M$, we write $\rho(x,A)=\inf\{\rho(x,y)\colon y\in A\}$. If $a\in M$ and $r>0$, then $B(a,r)=\{y\in M\colon \rho(a,y)<r\}$ is the open ball with center $a$ and radius $r$.

By $\LIP(M)$ we denote the vector space of all Lipschitz real-valued functions on $M$. By default, no topology is declared on $\LIP(M)$. As usual, by $\lip(M)$ we denote the Banach space of all bounded functions $f\in\LIP(M)$, endowed with the norm $\|f\|_{\lip(M)}=\|f\|_\infty+\lip(f)$, where $\|f\|_\infty$ denotes the supremum norm of $f$, i.e. $\|f\|_\infty=\sup_{x\in M}|f(x)|$, and $\lip(f)$ denotes the \emph{Lipschitz constant} of $f$, i.e.
\[\lip(f)=\sup_{\substack{x,y\in M\\x\neq y}}\frac{|f(x)-f(y)|}{\rho(x,y)}.\]
For a distinguished point $e\in M$, called the \emph{base point} of $M$, $\lip_0(M)$ denotes the Banach space of all functions $f\in\LIP(M)$ such that $f(e)=0$, equipped with the norm $\|f\|_{\lip_0(M)}=\lip(f)$. Note that for different choices of base points we get spaces $\lip_0(M)$ which are isometrically isomorphic. $\mathcal{F}(M)$ denotes the \emph{Lipschitz-free space} of $M$, that is, the Banach space equal to the closure of the linear subspace of the dual space $\lip_0(M)^*$ spanned by all one-point measures on $M$, i.e. $\mathcal{F}(M)=\overline{\mbox{span}\{\delta_x\colon x\in M\}}^{\|\cdot\|_{\lip_0(M)^*}}$. Recall that the dual space $\mathcal{F}(M)^*$ is isometrically isomorphic to $\lip_0(M)$.

\medskip

All topological spaces considered in this paper are assumed to be \emph{Tychonoff}, that is, completely regular and Hausdorff. For a topological space $X$, by $C(X)$ we denote the set of all continuous real-valued functions on $X$. $C_p(X)$ denotes $C(X)$ endowed with the pointwise topology $\tau_p$ (inherited from $\mathbb{R}^X$) and $C_k(X)$ denotes $C(X)$ endowed with the compact-open topology $\tau_k$. Note that $\tau_p\subseteq\tau_k$. If $\tau$ is a topology on $C(X)$ (e.g. $\tau=\tau_p$ or $\tau=\tau_k$) and $E$ is a subspace of $(C(X),\tau)$, then the subspace topology of $E$ will still be denoted by $\tau$.

If $X$ is a compact (Hausdorff) space, then, as usual, we endow $C(X)$ with the supremum norm $\|\cdot\|_\infty$. If $X$ is locally compact, then $C_0(X)$ denotes the subspace of $C(X)$ consisting of all functions which vanish at infinity, also endowed with the norm $\|\cdot\|_\infty$. In both cases the resulting spaces are Banach spaces.

For a metric space $M$, the spaces $\LIP(M)$, $\lip(M)$, and $\lip_0(M)$ are obviously subspaces of $C(M)$. Let $\lip_{0}(M)_p$ denote the space $\lip_{0}(M)$ endowed with the pointwise topology $\tau_p$ (inherited from $(C(M),\tau_p)$).

\medskip

Unless stated otherwise all vector spaces considered in this article are assumed to be over the field of real numbers. For a Banach space $E$, the closed unit ball of $E$ will be denoted by $B_E$. $E^*$ denotes the dual Banach space of $E$ and $E_w$ denotes $E$ endowed with the weak topology, i.e. the topology $w=\sigma(E,E^*)$. 
The weak* topology of $E^*$ will be denoted by $w^*$, i.e. $w^*=\sigma(E^*,E)$. 

If two Banach spaces $E$ and $F$ are isomorphic, then it is denoted by $E\simeq F$. If $G$ and $H$ are topological vector spaces (in particular, Banach spaces) and $T\colon G\to H$ is a mapping, then we do not \textit{a priori} assume that $T$ is linear; if $T$ is linear, then we call it an \emph{operator}.

The Banach spaces $c_0$ and $L_p(\Omega,\Sigma,\mu)$ ($1\le p\le\infty$), in particular $\ell_1(\Gamma)$ and $\ell_\infty(\Gamma)$ for a set $\Gamma$ and $\ell_\infty^n$ for an integer $n\in\mathbb{N}$, are defined in the standard way. Recall that a Banach space $E$ is \emph{$\ell_p$-saturated} for some $1\le p<\infty$ if every infinite-dimensional closed linear subspace of $E$ contains an isomorphic copy of the space $\ell_p$. 

\medskip

For a topological space $X$, by $w(x)$ we denote the \emph{weight} of $X$ (i.e. the minimal cardinality of a base of the topology of $X$), by $d(X)$ the \emph{density} of $X$ (i.e. the minimal cardinality of a dense subset of $X$), by $d_{seq}(X)$ the \emph{sequential density} of $X$ (i.e. the minimal cardinality of a \emph{sequentially dense} subset of $X$, that is, such a subset $A\subseteq X$ that for every $x\in X$ there exists a sequence $(x_n)_{n=0}^\infty$ in $A$ converging to $x$), by $c(X)$ the \emph{cellularity} of $X$ (i.e. the supremum of cardinalities of families of pairwise disjoint non-empty open subsets of $X$), and by $ww(X)$ the \emph{weak weight} of $X$ (i.e. the minimal weight $w(Y)$ of a Tychonoff space $Y$ such that there exists a continuous bijection $\varphi\colon X\rightarrow Y$). We always have $c(X)\le d(X)\le w(X)$, $d(X)\le d_{seq}(X)$, and $w(X)\ge ww(X)$. If $X$ is metric, then we have $c(X)=d(X)=d_{seq}(X)=w(X)\ge ww(X)$, and the cellularity is attained, that is, there exists a collection of pairwise disjoint non-empty open subsets of $X$ of cardinality $c(X)$ (see \cite[Theorem 8.1]{Hodel}).

\section{Density of spaces of Lipschitz functions\label{sec:density}}

In this paper we will frequently use the fact that for every infinite metric space $M$ the spaces $\lip(M)$ and $\lip_0(M)$ contain isomorphic copies of the Banach space $\ell_\infty(d(M))$, where $d(M)$ denotes the density of $M$ (see Theorems \ref{thm:linfLip} and \ref{cor1}). These results can be easily deduced from the proof of H\'ajek and Novotn\'{y}'s \cite[Proposition 3]{Hajek} when combined with Rosenthal's \cite[Corollary 1.2]{Rosenthal}. Since our proof is somewhat more direct, we add it for the convenience of the reader and to keep this paper more self-contained. The main difference between our proof and the one in~\cite{Hajek} is that we use the cellularity of $M$ to obtain directly a suitable family of disjoint open balls, which makes the argument simpler.

\begin{theorem}\label{thm:linfLip}
  For every infinite metric space $M$ the Banach space $\lip(M)$ contains an isomorphic copy of $\ell_{\infty}(d(M))$.
\end{theorem}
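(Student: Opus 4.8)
The plan is to exploit the metric structure to produce a large family of disjoint open balls and to use these balls as the "coordinate slots" on which an embedding of $\ell_\infty(d(M))$ can be built. First I would recall that, since $M$ is metric, the cellularity equals the density and is attained, so there is a family $\{B(x_i,r_i)\colon i\in I\}$ of pairwise disjoint nonempty open balls with $|I| = c(M) = d(M)$; moreover, by shrinking the radii we may assume $r_i \le 1$ for every $i$. For each $i$ choose a "bump" function $f_i\in\lip(M)$ supported in $B(x_i,r_i)$, concretely $f_i(y) = \max\bigl(0,\, 1 - \tfrac{2}{r_i}\rho(y,x_i)\bigr)$, which vanishes outside the half-ball, satisfies $f_i(x_i)=1$, $0\le f_i\le 1$, and has Lipschitz constant $2/r_i$. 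The functions $f_i$ have pairwise disjoint supports, which is the key feature that will make the estimates work.

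Next I would define the candidate embedding $T\colon \ell_\infty(I)\to\lip(M)$ by $T\bigl((a_i)_{i\in I}\bigr) = \sum_{i\in I} a_i f_i$. Because the supports are pairwise disjoint, at any point $y\in M$ at most one term is nonzero, so the sum is well-defined and pointwise bounded by $\|(a_i)\|_\infty$; hence $\|T(a)\|_\infty \le \|a\|_\infty$. For the Lipschitz constant, I would take $y,z\in M$ distinct and split into cases: if $y,z$ lie in the same ball $B(x_i,r_i)$, then $|T(a)(y)-T(a)(z)| = |a_i||f_i(y)-f_i(z)| \le \|a\|_\infty \tfrac{2}{r_i}\rho(y,z)$; but on that ball $\rho(y,z) < 2r_i$ so actually I want a bound of the form $C\|a\|_\infty\rho(y,z)$ — here one must be a little careful that the constant $2/r_i$ does not blow up. This is the point where the disjointness and the fact that a point outside $B(x_i,r_i)$ has $f_i=0$ can be leveraged: if $y$ is in the support of $f_i$ and $z$ is not in $B(x_i,r_i)$, then $|f_i(y)-f_i(z)| = f_i(y) = f_i(y)-f_i(y')$ where $y'$ is a point on the boundary sphere, giving $f_i(y)\le \tfrac{2}{r_i}\cdot\tfrac{r_i}{2} \cdot$ (ratio) and one relates $\rho(y,x_i)$ to $\rho(y,z)$; the upshot is $|f_i(y)-f_i(z)|\le 2\rho(y,z)/r_i$ but also $f_i(y)\le 1$, and combining these with $\rho(y,z)\ge$ dist from the support to the complement one gets a uniform bound. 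The cleanest route is to observe that each $f_i$ has Lipschitz constant $2/r_i$ but is $0$ outside a ball of radius $r_i/2$, so one shows the single estimate $|T(a)(y)-T(a)(z)|\le 2\|a\|_\infty \rho(y,z)$ by noting that the two points either see the same $f_i$ or the jump is "paid for" by travelling a distance at least $r_i/2$; in either case the ratio is at most $2/r_i$ and the bump value at most $1$ forces $\rho(y,z)\ge r_i/2$ when the value is close to $1$. I would organize this as: $\lip\bigl(\sum a_i f_i\bigr)\le 2\|a\|_\infty$. Hence $\|T(a)\|_{\lip(M)} = \|T(a)\|_\infty + \lip(T(a)) \le 3\|a\|_\infty$.

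For the lower bound, evaluating at the centers gives $T(a)(x_i) = a_i$, so $\|T(a)\|_\infty \ge \sup_i|a_i| = \|a\|_\infty$, whence $\|T(a)\|_{\lip(M)}\ge\|a\|_\infty$. Together with the upper bound this shows $T$ is an isomorphic embedding of $\ell_\infty(I) = \ell_\infty(d(M))$ into $\lip(M)$, completing the proof. The main obstacle, as indicated above, is the Lipschitz-constant estimate for $T(a)$: one must verify that the local constants $2/r_i$, which can be arbitrarily large as $r_i\to 0$, nonetheless combine into a single global constant independent of $i$, and this works precisely because a bump $f_i$ with value near $1$ forces any point where it has dropped to $0$ to be at distance comparable to $r_i$. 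I would also double-check the measurability/continuity of the infinite sum $\sum_i a_i f_i$ as a function on $M$ (it is locally a single continuous term, hence continuous) and that it indeed lies in $\lip(M)$ rather than just being bounded and locally Lipschitz — but since our Lipschitz estimate is global this is automatic.
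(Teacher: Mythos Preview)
Your claimed bound $\lip(T(a)) \le 2\|a\|_\infty$ is false in general, and this is exactly where the argument breaks. Take $M = \{0\} \cup \{1/n : n \ge 1\}$ with the usual metric; any infinite family of pairwise disjoint open balls in $M$ must have radii tending to $0$, e.g.\ $B(1/n, r_n)$ with $r_n < \tfrac{1}{2n(n+1)}$. Your bumps satisfy $f_n(1/n) = 1$, so for $a = ((-1)^n)_n \in \ell_\infty$ one gets $T(a)(1/n) = (-1)^n$ while $\rho(1/n, 1/(n+1)) = \tfrac{1}{n(n+1)} \to 0$; hence $\lip(T(a)) = \infty$ and $T(a) \notin \lip(M)$. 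The heuristic you invoke---that a bump of height near $1$ drops to $0$ only over distance comparable to $r_i$---controls each $f_i$ individually (yielding $\lip(f_i) = 2/r_i$) but gives no uniform control when two centres $x_i, x_j$ are themselves only at distance $\sim \max(r_i, r_j)$, as happens above.

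The paper fixes this by scaling the bump heights to the local geometry: it takes $f_\alpha(x) = \max\{\min(\delta_\alpha, 1) - \rho(x, y_\alpha),\, 0\}$ with $\delta_\alpha = \rho\bigl(y_\alpha, M \setminus B(y_\alpha, r_\alpha/2)\bigr)$, which forces $\lip(f_\alpha) \le 1$ irrespective of $r_\alpha$ and yields the global bound $\|T(v)\|_{\lip(M)} \le 3\|v\|_\infty$. The cost is that $\|f_\alpha\|_\infty = \min(\delta_\alpha, 1)$ may be arbitrarily small, so your sup-norm lower bound $\|T(a)\|_\infty \ge \|a\|_\infty$ is lost and $T$ is not obviously an embedding. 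Instead the paper checks only that $\|T(e_\alpha)\|_{\lip(M)} \ge 1$ for every basic vector $e_\alpha$ (the Lipschitz seminorm, not the sup norm, supplies this), and then applies a theorem of Rosenthal to pass to a subset $\Gamma$ of full cardinality $d(M)$ on which $T|_{\ell_\infty(\Gamma)}$ is an isomorphism.
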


\begin{proof}
  Since $M$ is a metric space, its cellularity $c(M)$ is equal to the density $d(M)$, and so we may pick a family of pairwise disjoint (open) balls $(B(y_\alpha , r_\alpha ))_{\alpha< d(M)}$ of cardinality $d(M)$ such that $r_\alpha\le 1$ for every $\alpha<d(M)$ (see \cite[Theorem~8.1]{Hodel}). For each $\alpha<d(M)$ we set
  \[
    \delta_{\alpha} = \rho(y_\alpha, M\setminus B(y_\alpha,r_\alpha/2)) = \inf\big\{\rho(x,y_\alpha)\colon x \in  M\setminus B(y_\alpha,r_\alpha/2)\big\}
  \]
  and, for every $x\in M$,
  \[
    f_\alpha(x) = \max\big\{\min\{\delta_\alpha,1\}-\rho(x,y_\alpha),\ 0\big\}.
  \]
  Note that $\min\{1,\delta_\alpha\}\ge r_\alpha/2$ and that $f_\alpha\colon M\rightarrow\mathbb{R}$ is such a bounded continuous function that
  \[
    \supp(f_\alpha)\subseteq\overline{B(y_\alpha,r_\alpha/2)}\subseteq B(y_\alpha,r_\alpha)\quad\text{and}\quad f_\alpha^{-1}[(0,\infty)]\subseteq B(y_\alpha,r_\alpha/2)
  \]
  as well as $\|f_\alpha\|_\infty=f_\alpha(y_\alpha)=\min\{1,\delta_\alpha\}$, so $\|f_\alpha\|_\infty\le1$.

  Fix $\alpha<d(M)$. We will estimate the Lipschitz constant of $f_\alpha$. Since $f_\alpha$ vanishes outside the ball $B(y_\alpha,r_\alpha/2)$, we only have to consider two cases. For $x,y\in B(y_\alpha,r_\alpha/2)$ we have
  \[
    |f_\alpha(x)-f_\alpha(y)| = |\rho(x,y_\alpha)-\rho(y,y_\alpha)| \le \rho(x,y).
  \]
  For $x\in B(y_\alpha,r_\alpha/2)$ and $y\not\in B(y_\alpha,r_\alpha/2)$ we have
  \[
    |f_\alpha(x)-f_\alpha(y)| = |f_\alpha(x)| \le\min\{\delta_\alpha,1\} - \rho(x,y_\alpha) \le \rho(y,y_\alpha)-\rho(x, y_\alpha) \le \rho(x,y).
  \]
  From these two inequalities we conclude that $\lip(f_\alpha)\le 1$. By the definition of $\delta_\alpha$ we may pick a sequence $(z_n)_{n=0}^{\infty}$ in $M\setminus B(y_\alpha,r_\alpha/2)$ with $\rho(y_\alpha,z_n)\searrow \delta_\alpha$ for $n\rightarrow \infty$. Using this sequence we have
  \[
    \frac{|f_\alpha(y_\alpha)-f_\alpha(z_n)|}{\rho(y_\alpha,z_n)} = \frac{\min\{1,\delta_\alpha\}}{\rho(y_\alpha,z_n)} \nearrow \min\{1,1/\delta_\alpha\},
  \]
  which implies that
  \[\tag{$*$}1\ge\lip(f_\alpha)\ge \min\{1,1/\delta_\alpha\},\]
  and hence $f_\alpha\in\lip(M)$.

  Since the functions $f_\alpha$'s have disjoint supports, we may define the operator
  \[
    T\colon \ell_\infty(d(M)) \rightarrow \lip(M), \qquad v=(v_\alpha)_{\alpha<d(M)} \longmapsto T(v)= \sum_{\alpha<d(M)} v_\alpha f_\alpha.
  \]

  Fix $v\in\ell_\infty(d(M))$. It is immediate that $\|T(v)\|_\infty \le \|v\|_\infty$, so $T(v)$ is a bounded function. To compute an upper bound for the Lipschitz constant of $T(v)$, we only need to consider the case of $x\in \supp(f_\alpha)\subseteq \overline{B(y_\alpha,r_\alpha/2)}$ and $y\in \supp(f_\beta)\subseteq \overline{B(y_\beta, r_\beta/2)}$, for $\alpha\neq\beta<d(M)$, since the other cases are already covered by the computation of the Lipschitz constants of $f_\alpha$'s (see ($*$)). We have
  \begin{align*}
    |T(v)(x)-T(v)(y)| &= |v_\alpha f_\alpha(x)- v_\beta f_\beta(y)| \le\\
    &\le|v_\alpha| (\min(1,\delta_\alpha)-\rho(x,y_\alpha)) + |v_\beta| (\min(1,\delta_\beta)-\rho(y,y_\beta))\le\\
    &\le |v_\alpha|(\rho(y,y_\alpha)-\rho(x,y_\alpha)) + |v_\beta| (\rho(x,y_\beta)-\rho(y,y_\beta)) \le 2\|v\|_\infty \rho(x,y).
  \end{align*}
Hence, $\lip(T(v))\le2\|v\|_\infty$, and so $T(v)\in\lip(M)$. We conclude that
  \[
    \|T(v)\|_{\lip(M)}= \|T(v)\|_\infty + \lip(T(v))\le 3 \|v\|_\infty.
  \]
  Since for all $\alpha<d(M)$, by ($*$), we also have
  \[
    \|T(e_\alpha)\|_{\lip(M)}=\|f_\alpha\|_{\lip(M)} = \|f_\alpha\|_\infty + \lip(f_\alpha)\ge\min\{1,\delta_\alpha\}+\min\{1,1/\delta_\alpha\}\ge 1,
  \]
  where $e_\alpha$ denotes the $\alpha$-th standard basic vector of $\ell_\infty(d(M))$, we may use Rosenthal's \cite[Theorem~7.10]{HajekBi} to conclude the existence of a set $\Gamma$ with cardinality $d(M)$ such that $\ell_\infty(\Gamma)$ is isomorphic to a subspace of $\lip(M)$. This finishes the proof.
\end{proof}

Following \cite[Definition~2.17]{Weaver}, for a complete metric space $M$ with the metric $\rho$ and the base point $e$, the space $M^\dagger$ is defined as $M\setminus\{e\}$ and equipped with the metric
\[
  \rho^\dagger(x,y) = \frac{\rho(x,y)+|\rho(x,e)-\rho(y,e)|}{\max\{\rho(x,e),\rho(y,e)\}}
\]
for $x,y\in M\setminus\{e\}$. By \cite[Theorem 2.20]{Weaver}, the Banach spaces $\lip_0(M)$ and $\lip(M^\dagger)$ are isomorphic---we will use this result together with Theorem \ref{thm:linfLip} to deduce the following theorem of H\'{a}jek and Novotn\'{y} \cite{Hajek}, which will constitute one of our main tools in what follows.

\begin{theorem}[H\'{a}jek--Novotn\'{y}]\label{cor1}
  Let $M$ be an infinite  metric space. The Banach space $\lip_0(M)$ contains an isomorphic copy of $\ell_{\infty}(d(M))$.
\end{theorem}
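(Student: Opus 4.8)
The plan is to reduce this to Theorem~\ref{thm:linfLip} via the passage from $\lip_0(M)$ to $\lip(M^\dagger)$, being careful about the fact that the construction $M \mapsto M^\dagger$ is only defined for \emph{complete} metric spaces. First I would pass to the completion $\tilde M$ of $M$, noting that $\lip_0(M)$ and $\lip_0(\tilde M)$ are isometrically isomorphic (every Lipschitz function on $M$ vanishing at the base point extends uniquely to a Lipschitz function on $\tilde M$ with the same Lipschitz constant, and $\mathcal{F}(M) = \mathcal{F}(\tilde M)$ is standard), and that $d(\tilde M) = d(M)$ since $M$ is dense in $\tilde M$. So without loss of generality $M$ may be assumed complete.

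Next, by \cite[Theorem~2.20]{Weaver} the Banach spaces $\lip_0(M)$ and $\lip(M^\dagger)$ are isomorphic, where $M^\dagger = M \setminus \{e\}$ carries the metric $\rho^\dagger$ displayed above. Then I would apply Theorem~\ref{thm:linfLip} to the metric space $M^\dagger$: it contains an isomorphic copy of $\ell_\infty(d(M^\dagger))$, hence so does $\lip_0(M)$. It remains only to check the cardinal arithmetic, namely that $d(M^\dagger) = d(M)$. Since $M^\dagger$ and $M \setminus \{e\}$ have the same underlying set, and $M \setminus \{e\}$ differs from $M$ by a single point, $d(M \setminus \{e\}) = d(M)$ for an infinite metric space $M$ (removing one point from an infinite metric space does not change the density: a countable-or-larger dense set can always be adjusted). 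So one needs that $\rho$ and $\rho^\dagger$ induce topologies on $M \setminus \{e\}$ with the same density; in fact they induce the same topology on $M \setminus \{e\}$ — this is implicit in \cite{Weaver} and follows because on any set bounded away from $e$ and bounded in $\rho$-norm the two metrics are bilipschitz equivalent, so the identity map is a homeomorphism, and density is a topological (indeed metric, hence preserved) invariant.

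The only genuine subtlety — and the step I expect to require the most care — is the reduction to the complete case: one must make sure that the isometric isomorphism $\lip_0(M) \cong \lip_0(\tilde M)$ and the equality $d(\tilde M) = d(M)$ are both invoked cleanly, since Weaver's machinery ($M^\dagger$, \cite[Theorem~2.20]{Weaver}) presupposes completeness. Everything else is bookkeeping: apply Theorem~\ref{thm:linfLip}, transport the copy of $\ell_\infty(d(M^\dagger))$ across the isomorphism $\lip_0(M) \simeq \lip(M^\dagger)$, and record $d(M^\dagger) = d(M)$. I would also remark that, just as in the proof of Theorem~\ref{thm:linfLip}, one could alternatively give a direct construction inside $\lip_0(M)$ using a maximal family of pairwise disjoint open balls of bounded radius together with suitably normalized bump functions vanishing at the base point, but routing through $\lip(M^\dagger)$ keeps the argument short given that Theorem~\ref{thm:linfLip} is already available.
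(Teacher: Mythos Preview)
Your proposal is correct and follows essentially the same route as the paper: reduce to the complete case, invoke \cite[Theorem~2.20]{Weaver} to pass from $\lip_0(M)$ to $\lip(M^\dagger)$, apply Theorem~\ref{thm:linfLip}, and verify $d(M^\dagger)=d(M)$ by showing that $\rho$ and $\rho^\dagger$ induce the same topology on $M\setminus\{e\}$. The only cosmetic difference is that the paper checks this last point via an explicit sequential-continuity computation for the identity map, whereas you sketch the (equally valid) local bilipschitz argument.
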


\begin{proof}
Since for every metric space $M$ and its completion $\tilde{M}$ the spaces $\lip_0(M)$ and $\lip_0(\tilde{M})$ are isometrically isomorphic, we may without loss of generality assume that $M$ is complete. We show that the identity mapping $f\colon M^\dagger\rightarrow M\setminus\{e\}$ is a homeomorphism, which implies that $d(M)=d(M^\dagger)$. Let $(x_n)_{n=0}^\infty$ be a sequence in $M\setminus\{e\}$.

  First note that if $(x_n)_{n=0}^\infty$ converges to some $x\in M\setminus\{e\}$ with respect to $\rho$, then we trivially have $\rho^\dagger(x_n,x)\rightarrow 0$ as well, which implies that $f$ is sequentially continuous and hence continuous.

  Now assume that $(x_n)_{n=0}^\infty$ converges to $x\in M\setminus\{e\}$ with respect to $\rho^\dagger$. 
  Note that for every $n\in\mathbb{N}$ we have
  \[\max\{\rho(x_n,e),\rho(x,e)\}\le\max\{\rho(x_n,x)+\rho(x,e),\rho(x,e)\}=\rho(x_n,x)+\rho(x,e),\]
 hence
  \[
     \rho^\dagger(x_n,x) = \frac{\rho(x_n,x)+|\rho(x_n,e)-\rho(x,e)|}{\max\{\rho(x_n,e),\rho(x,e)\}}\ge
     \frac{\rho(x_n,x)}{\rho(x_n,x)+\rho(x,e)},
  \]
  which, after the rearrangement, gives
  \[
  \rho^\dagger(x_n,x)\ge\rho(x_n,x)\cdot\frac{1-\rho^\dagger(x_n,x)}{\rho(x,e)}.
  \]
  The latter inequality yields that $\rho(x_n,x)\rightarrow0$ as $\rho^\dagger(x_n,x)\to0$. It follows that the inverse mapping $f^{-1}$ is also sequentially continuous and hence continuous. Consequently, $f$ is a homeomorphism between $M\setminus\{e\}$ and $M^\dagger$, and so $d(M)=d(M^\dagger)$. By aforementioned \cite[Theorem~2.20]{Weaver}, the Banach spaces  $\lip_0(M)$ and $\lip(M^\dagger)$ are isomorphic and hence the claim follows from Theorem~\ref{thm:linfLip}.
\end{proof}

As a direct consequence of Theorem \ref{cor1} we obtain H\'{a}jek and Novotný's (\cite[Proposition 3]{Hajek}). 

\begin{corollary}[H\'{a}jek--Novotný]\label{hano_ell1}
  For every infinite metric space $M$, the space $\mathcal{F}(M)$ contains a complemented copy of $\ell_{1}(d(M))$.
\end{corollary}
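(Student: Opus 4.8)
The plan is to deduce this from Theorem~\ref{cor1} by a standard duality argument. Since $\lip_0(M)$ contains an isomorphic copy of $\ell_\infty(d(M))$, and we want a \emph{complemented} copy of $\ell_1(d(M))$ in the predual $\mathcal{F}(M)$, the key point is to control the duality: $\mathcal{F}(M)^*=\lip_0(M)$ isometrically, and $\ell_1(d(M))^*=\ell_\infty(d(M))$.

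First I would make the embedding from Theorem~\ref{cor1} more concrete rather than abstract. Looking at the proof of Theorem~\ref{thm:linfLip}, the isomorphic copy of $\ell_\infty(d(M))$ inside $\lip(M)$ is given by an explicit operator $T(v)=\sum_\alpha v_\alpha f_\alpha$ with the $f_\alpha$ having pairwise disjoint supports contained in small balls $B(y_\alpha,r_\alpha/2)$. The crucial extra feature here is that $T$ is \emph{weak*-to-weak*} continuous in a suitable sense: since $f_\alpha(x)$ depends only on finitely many (in fact one) coordinates for each fixed $x$, the map $v\mapsto T(v)$ is continuous from $(\ell_\infty(d(M)),w^*)$ to $(\lip(M),\tau_p)$, and since the $f_\alpha$ are bounded Lipschitz functions vanishing at $e$ (after the $M^\dagger$ reduction, or working directly with balls avoiding $e$), $T$ lands in $\lip_0(M)=\mathcal{F}(M)^*$ and is $w^*$-$w^*$ continuous there. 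By the standard adjoint theorem, a bounded $w^*$-$w^*$ continuous operator $T\colon \ell_1(d(M))^*\to\mathcal{F}(M)^*$ is the adjoint $T=S^*$ of a bounded operator $S\colon\mathcal{F}(M)\to\ell_1(d(M))$. Because $T$ is an isomorphic embedding and is $w^*$-$w^*$ continuous with $w^*$-closed range (being an embedding of a dual space), $S$ is a surjection.

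Next I would produce the complementation. Having $S\colon\mathcal{F}(M)\to\ell_1(d(M))$ surjective with $T=S^*$ an isomorphic embedding, one gets a bounded right inverse or a projection by a duality/averaging argument: the composition $S^*\circ (S^*)^{-1}$ on the range, transported back, gives a bounded projection. Concretely, since $T=S^*$ is an isomorphism onto its range, pick the left inverse $R\colon T[\ell_\infty(d(M))]\to\ell_\infty(d(M))$, extend it (using that $\ell_\infty(d(M))$ is $1$-injective, i.e. an absolute Lipschitz retract / injective Banach space) to a bounded operator $\tilde R\colon\lip_0(M)\to\ell_\infty(d(M))$; then $\tilde R$ is the adjoint of some $Q\colon\ell_1(d(M))\to\mathcal{F}(M)$ provided $\tilde R$ can be taken $w^*$-$w^*$ continuous — here injectivity of $\ell_\infty$ combined with $w^*$-lower-semicontinuity lets one choose the extension to be $w^*$-$w^*$ continuous, or alternatively one simply notes $\ell_\infty(d(M))$ is $1$-complemented in $\lip_0(M)$ by a $w^*$-continuous projection since the $f_\alpha$ have disjoint supports (pointwise evaluation-type averaging). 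Then $Q$ is a bounded operator with $S\circ Q=\mathrm{id}_{\ell_1(d(M))}$, so $Q\circ S$ is a bounded projection of $\mathcal{F}(M)$ onto $Q[\ell_1(d(M))]\cong\ell_1(d(M))$.

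The main obstacle I anticipate is the $w^*$-$w^*$ continuity bookkeeping: Theorem~\ref{cor1} as stated only gives an abstract isomorphic embedding of $\ell_\infty(d(M))$ into $\lip_0(M)$, whereas to pass to the predual I genuinely need the embedding (and the projection onto it) to be $w^*$-$w^*$ continuous, which forces me to revisit the explicit construction in the proof of Theorem~\ref{thm:linfLip} and verify that $T$ and the projection are pointwise-continuous on bounded sets (equivalently $w^*$-$w^*$ continuous by Banach--Dieudonné). This is routine given the disjoint-support structure — each coordinate functional $v\mapsto \langle T(v),\mu\rangle$ for $\mu\in\mathcal{F}(M)$ reduces to a $w^*$-continuous functional on $\ell_\infty(d(M))$ because finitely supported measures are dense and $f_\alpha$ have disjoint supports — but it is the only place where a little care beyond citing Theorem~\ref{cor1} as a black box is needed. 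Everything else (the adjoint correspondence, injectivity of $\ell_\infty(d(M))$, and assembling the projection) is standard Banach space duality.
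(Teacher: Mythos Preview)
The paper's proof is one sentence: since $\mathcal{F}(M)^*=\lip_0(M)$ contains an isomorphic copy of $\ell_\infty(d(M))$ by Theorem~\ref{cor1}, Rosenthal's \cite[Corollary~1.2]{Rosenthal} gives directly that $\mathcal{F}(M)$ contains a complemented copy of $\ell_1(d(M))$. Rosenthal's result applies to \emph{any} isomorphic embedding of $\ell_\infty(\Gamma)$ into a dual $E^*$---no $w^*$-$w^*$ continuity hypothesis is required---so none of the bookkeeping you flag as the main obstacle actually arises.

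Your route amounts to reproving the relevant instance of Rosenthal's theorem by hand via the explicit disjoint-support construction. The strategy is sound, but two steps in the execution are shaky. First, the embedding in Theorem~\ref{cor1} is obtained through the Banach-space isomorphism $\lip_0(M)\simeq\lip(M^\dagger)$, whose compatibility with the relevant $w^*$ topologies you do not address; your shortcut of ``working directly with balls avoiding $e$'' is not free either, since in the $\lip_0$-norm one only has $\lip(f_\alpha)\ge\min\{1,1/\delta_\alpha\}$, which need not be uniformly bounded below. Second, producing a $w^*$-$w^*$ continuous left inverse from the injectivity of $\ell_\infty(d(M))$ is not justified---injectivity yields a bounded extension but says nothing about $w^*$-continuity---and the ``pointwise evaluation'' projection you propose would need careful normalization to be bounded. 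The clean repair of the second point is to bypass it: once you have a bounded surjection $S\colon\mathcal{F}(M)\to\ell_1(d(M))$, the lifting property of $\ell_1(\Gamma)$ (choose bounded preimages of the unit vectors) immediately gives a bounded right inverse $Q$, and $QS$ is the desired projection. With these fixes your argument can be completed, but it is substantially longer than simply citing Rosenthal.
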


\begin{proof}
  Since by Theorem~\ref{cor1} the dual space $\lip_0(M)$ of $\mathcal{F}(M)$ contains a copy of $\ell_\infty(d(M))$, the result follows from \cite[Corollary~1.2]{Rosenthal}.
\end{proof}

For the next corollary recall that $d(\ell_\infty(\Gamma))=2^{|\Gamma|}$ for any infinite set $\Gamma$.

\begin{corollary}\label{To}
For every infinite metric space $M$, the Banach spaces $\lip_0(M)$ and $\lip(M)$ both have density $2^{d(M)}$.
\end{corollary}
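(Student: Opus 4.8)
The plan is to sandwich both $d(\lip_0(M))$ and $d(\lip(M))$ between $2^{d(M)}$ from below and $2^{d(M)}$ from above, so that the statement falls out as a short corollary of Theorems~\ref{thm:linfLip} and~\ref{cor1}.

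For the lower bound I would quote those two theorems directly: each of $\lip(M)$ and $\lip_0(M)$ contains a subspace isomorphic, hence homeomorphic, to $\ell_\infty(d(M))$, and $d(\ell_\infty(d(M)))=2^{d(M)}$ as recalled just before the statement. Since a Banach space carries a metric topology and the density of a subspace of a metric space never exceeds the density of the ambient space (for metrizable spaces density coincides with weight, and weight is hereditary), this yields $d(\lip(M))\ge 2^{d(M)}$ and $d(\lip_0(M))\ge 2^{d(M)}$.

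For the upper bound I would use that the elements of $\lip(M)$ and of $\lip_0(M)$ are continuous on $M$, so, after fixing a dense set $D\subseteq M$ with $|D|=d(M)$, the restriction map $f\mapsto f|_D$ is injective on each of these spaces and carries it into $\mathbb{R}^D$. Because $M$ is infinite, $d(M)$ is infinite (a finite subset of a metric space is closed, hence cannot be dense in an infinite space), so $|\mathbb{R}^D|=(2^{\aleph_0})^{d(M)}=2^{d(M)}$; therefore $|\lip(M)|,|\lip_0(M)|\le 2^{d(M)}$, and since $d(X)\le|X|$ for every topological space $X$ we get $d(\lip(M)),d(\lip_0(M))\le 2^{d(M)}$. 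Combining the two bounds gives the asserted equalities.

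I do not expect a genuine obstacle here, as the result is essentially immediate from Theorems~\ref{thm:linfLip} and~\ref{cor1}; the only points that deserve a word of care are that $d(M)$ is infinite, so that the cardinal arithmetic $(2^{\aleph_0})^{d(M)}=2^{d(M)}$ is legitimate, and that density cannot grow when passing to subspaces of metric spaces, which is exactly what lets the embedded copy of $\ell_\infty(d(M))$ push its density up into the ambient Lipschitz space.
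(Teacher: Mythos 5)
Your proposal is correct and follows essentially the same route as the paper: the lower bound comes from the embedded copy of $\ell_\infty(d(M))$ given by Theorems~\ref{thm:linfLip} and~\ref{cor1}, and the upper bound from the fact that $M$ carries at most $2^{d(M)}$ continuous real-valued functions. The only difference is cosmetic: where the paper cites \cite[Theorem~10.1]{Hodel} for that cardinality bound, you reprove it directly via the injectivity of restriction to a dense subset, which is exactly the standard argument behind the citation.
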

\begin{proof}
  We already know that $d(\lip_0(M))\ge 2^{d(M)}$ and $d(\lip(M))\ge 2^{d(M)}$. The other direction follows from the fact that there are at most $2^{d(M)}$ many continuous real-valued functions on $M$, see e.g. \cite[Theorem~10.1]{Hodel}.
\end{proof}

Corollary \ref{To}, together with the classical Toru\'nczyk theorem (\cite{Tor}) stating that two infinite-dimensional Banach spaces are homeomorphic if and only if they have the same density, yields the following result. 

\begin{corollary}\label{see}
  Let $M$ and $N$ be infinite metric spaces. The Banach spaces $\ell_\infty(d(M))$, $\lip_0(M)$, $\lip(M)$, $\lip_0(N)$, and $\lip(N)$ are all pairwise homeomorphic if and only if $2^{d(M)}=2^{d(N)}$.
\end{corollary}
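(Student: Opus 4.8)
The plan is to deduce Corollary~\ref{see} by combining Corollary~\ref{To} with the Toru\'nczyk theorem, after first checking that all five spaces in question are indeed infinite-dimensional Banach spaces. Since $M$ and $N$ are infinite metric spaces, the spaces $\lip_0(M)$, $\lip(M)$, $\lip_0(N)$, $\lip(N)$ are infinite-dimensional (they contain $\ell_\infty(d(M))$, resp. $\ell_\infty(d(N))$, by Theorems~\ref{thm:linfLip} and~\ref{cor1}, and $d(M),d(N)\ge\aleph_0$), and $\ell_\infty(d(M))$ is infinite-dimensional for the same reason. So the Toru\'nczyk theorem applies to each of them, and two of them are homeomorphic if and only if they have equal density.

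Next I would compute the densities. By Corollary~\ref{To} we have $d(\lip_0(M))=d(\lip(M))=2^{d(M)}$ and $d(\lip_0(N))=d(\lip(N))=2^{d(N)}$, while the remark preceding Corollary~\ref{To} gives $d(\ell_\infty(d(M)))=2^{d(M)}$ (using that $d(M)$ is infinite). Hence the five densities are $2^{d(M)}$, $2^{d(M)}$, $2^{d(M)}$, $2^{d(N)}$, $2^{d(N)}$ respectively. If $2^{d(M)}=2^{d(N)}$, then all five densities coincide, so by Toru\'nczyk's theorem all five spaces are pairwise homeomorphic. Conversely, if the five spaces are pairwise homeomorphic, then in particular $\lip_0(M)$ is homeomorphic to $\lip_0(N)$, so by Toru\'nczyk's theorem $d(\lip_0(M))=d(\lip_0(N))$, i.e. $2^{d(M)}=2^{d(N)}$. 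This establishes the equivalence.

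There is essentially no obstacle here; the only point requiring a moment's care is the bookkeeping of \emph{which} implication needs \emph{which} of the (many) pairwise homeomorphisms, and making sure the hypothesis ``infinite-dimensional'' in Toru\'nczyk's theorem is legitimately in force for every space on the list. Both are immediate from the results quoted earlier in the section. One could phrase the whole argument in a single sentence: by Corollary~\ref{To} and the preceding remark all five spaces have density $2^{d(M)}$ or $2^{d(N)}$, and by Toru\'nczyk's theorem infinite-dimensional Banach spaces are homeomorphic exactly when their densities agree, whence the claim.
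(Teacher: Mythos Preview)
Your proposal is correct and follows exactly the paper's own approach: the corollary is stated as an immediate consequence of Corollary~\ref{To} together with Toru\'nczyk's theorem, and your write-up merely unpacks that one-line justification (including the harmless verification that all spaces involved are infinite-dimensional).
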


Of course, if $d(M)=d(N)$, then $2^{d(M)}=2^{d(N)}$, however the converse implication in general does not hold, as there are models of set theory (like original Cohen's model for violating the Continuum Hypothesis, see~\cite[Chapter VII]{Kunen}) in which the equality $2^{\aleph_0}=2^{\aleph_1}$ is satisfied.

Let us now move to other topologies on spaces of Lipschitz functions. We will be in particular interested in topologies laying between the pointwise topology $\tau_p$ and the compact-open topology $\tau_k$.

Let $(c_{0})_p$ be the standard Banach space $c_0$ but endowed with the product topology $\tau_p$ inherited from $\mathbb{R}^{\mathbb{N}}$. For every infinite Tychonoff space $X$ the space $C_p(X)$ contains a non-necessarily closed copy of $(c_0)_p$; a closed copy can be found provided that $X$ contains an infinite compact subset, see \cite{KMS} for a detailed discussion. The question when $C_p(X)$ contains a complemented copy of $(c_0)_p$ was investigated in \cite{BKS1} (cf. also \cite{KMSZ,KSZ,KSZ2}). Note that, by virtue of the Closed Graph Theorem, if for a compact space $X$ the space $C_p(X)$ contains a complemented copy of  $(c_{0})_p$, then the Banach space $C(X)$ contains a complemented copy of $c_0$ (the converse is however not true, see \cite{KSZ2} for an appropriate counterexample).
\begin{proposition}\label{prop_lip0p_c0p}
For any metric space $M$, the space $\lip_{0}(M)_p$ does not contain any closed copy of  $(c_{0})_p$.
 \end{proposition}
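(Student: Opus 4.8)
The plan is a proof by contradiction. Suppose that $F\subseteq\lip_{0}(M)$ is a linear subspace which is closed in $\lip_{0}(M)_p$ and admits a linear homeomorphism $\varphi\colon (c_{0})_p\to (F,\tau_p)$; write $g_{n}=\varphi(e_{n})$, where $(e_{n})$ is the unit vector basis of $c_{0}$. The first and, I expect, crucial step is to promote $\varphi$ to a \emph{bounded} operator $c_{0}\to\lip_{0}(M)$ by the Closed Graph Theorem. Indeed, $\varphi\colon c_{0}\to\lip_{0}(M)$ is linear and everywhere defined; and if $c^{(j)}\to c$ in $\|\cdot\|_{\infty}$ and $\varphi(c^{(j)})\to h$ in $\|\cdot\|_{\lip_{0}(M)}$, then both sequences also converge in $\tau_p$ --- here one uses that $\|\cdot\|_{\lip_{0}(M)}=\lip(\cdot)$, so that $|f(x)-f(e)|\le\lip(f)\rho(x,e)$ shows norm convergence in $\lip_{0}(M)$ forces pointwise convergence --- whence $h=\varphi(c)$ by the $\tau_p$-continuity of $\varphi$ and the Hausdorffness of $\tau_p$. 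So $\varphi$ has closed graph and is therefore bounded: there is $C>0$ with $\lip(\varphi(c))\le C\|c\|_{\infty}$ for every $c\in c_{0}$. This is precisely where the purely topological hypothesis is converted into quantitative control over Lipschitz constants; everything afterwards is comparatively soft.

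Next I would run the standard ``summing the basis'' obstruction. Put $u_{N}=e_{1}+\dots+e_{N}\in c_{0}$, so $\|u_{N}\|_{\infty}=1$, while $(u_{N})$ converges coordinatewise to $(1,1,1,\dots)\notin c_{0}$ and hence does not converge in $(c_{0})_p$. Let $s_{N}:=\varphi(u_{N})=g_{1}+\dots+g_{N}\in F$; by the bound above $\lip(s_{N})\le C$ for all $N$, and $s_{N}(e)=0$. Applying the continuous map $\varphi$ to the convergence $\sum_{n\le N}c_{n}e_{n}\to c$ in $(c_{0})_p$ shows $\varphi(c)(x)=\sum_{n}c_{n}g_{n}(x)$ for every $x\in M$ and every $c\in c_{0}$; since the series $\sum_{n}c_{n}g_{n}(x)$ therefore converges for every $c\in c_{0}$, the uniform boundedness principle on $c_{0}$ (equivalently, the classical fact that $\sum_{n}|a_{n}|<\infty$ whenever $\sum_{n}c_{n}a_{n}$ converges for all $(c_{n})\in c_{0}$) yields $\sum_{n}|g_{n}(x)|<\infty$ for each $x$. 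Consequently $s_{N}(x)=\sum_{n\le N}g_{n}(x)\to\psi(x):=\sum_{n}g_{n}(x)$ pointwise on $M$; being a pointwise limit of functions of Lipschitz constant at most $C$ that vanish at $e$, the function $\psi$ lies in $\lip_{0}(M)$ with $\lip(\psi)\le C$.

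Finally I would invoke the closedness of $F$: since $s_{N}\in F$, $s_{N}\to\psi$ in $\lip_{0}(M)_p$ and $\psi\in\lip_{0}(M)$, we get $\psi\in F$, say $\psi=\varphi(a)$ for some $a\in c_{0}$. Then $\varphi(u_{N})=s_{N}\to\psi=\varphi(a)$ in $\tau_p$, and continuity of $\varphi^{-1}\colon (F,\tau_p)\to (c_{0})_p$ forces $u_{N}\to a$ in $(c_{0})_p$, hence coordinatewise, so $a_{k}=\lim_{N}(u_{N})_{k}=1$ for every $k$; thus $a=(1,1,1,\dots)\notin c_{0}$, contradicting $a\in c_{0}$. (If $M$ is finite the hypothesis is vacuous, as then $\lip_{0}(M)$ is finite-dimensional; the argument above needs no such case split.) The only place where I anticipate having to be careful is the Closed Graph step and, inside it, the verification that norm convergence in $\lip_{0}(M)$ implies pointwise convergence --- immediate here because the $\lip_{0}$-norm is the Lipschitz seminorm and all functions in sight vanish at the base point.
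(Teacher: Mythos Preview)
Your proof is correct, but it follows a genuinely different route from the paper's.

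The paper argues as follows: the closed unit ball $B$ of $\lip_0(M)$ is $\tau_p$-compact (this is \cite[Theorem~2.37]{Weaver}), so $\lip_0(M)_p=\bigcup_n nB$ is $\sigma$-compact; intersecting with the closed subspace $E$ gives a countable compact cover of $E\cong(c_0)_p\cong C_p(\mathbb{N}^{\#})$, where $\mathbb{N}^{\#}$ is the one-point compactification of $\mathbb{N}$. This contradicts Arhangel'skii's theorem \cite[Theorem~I.2.1]{Arch} that $C_p(X)$ is never $\sigma$-compact for infinite $X$.

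By contrast, you promote the $\tau_p$-isomorphism to a norm-bounded operator via the Closed Graph Theorem and then exhibit an explicit obstruction (the partial sums $u_N=e_1+\dots+e_N$ have $\tau_p$-convergent images in $F$, forcing a preimage equal to the constant-$1$ sequence). The paper's argument is shorter, cites off-the-shelf $C_p$-theory, and generalises verbatim with $(c_0)_p$ replaced by any $C_p(X)$ for infinite compact $X$; your argument is more self-contained, stays entirely within elementary Banach-space language, and avoids the external reference, at the price of being tailored to $c_0$. Both approaches ultimately rest on the same underlying phenomenon---that pointwise limits of uniformly Lipschitz functions vanishing at the base point remain in $\lip_0(M)$---but the paper packages this as $\tau_p$-compactness of $B$ while you use it directly to identify the limit $\psi$.
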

\begin{proof}
Assume that $E$ is a closed linear subspace of $\lip_0(M)_p$ isomorphic to $(c_{0})_p$. Since the closed unit ball $B$ of $\lip_{0}(M)$ is compact in $\lip_{0}(M)_p$, for the sequence $(B_{n})_{n=0}^\infty$ of compact sets, where $B_{n}=nB$ for $n\in\mathbb{N}$, we obtain the compact cover $(B_{n}\cap E)_{n=0}^\infty$ of $E$. On the other hand, the space $(c_0)_p$ is isomorphic to the space $C_p(\mathbb{N}^{\#})$, where $\mathbb{N}^{\#}$ is a one-point compactification of the discrete space $\mathbb{N}$ of natural numbers. It follows that $C_p(\mathbb{N}^{\#})$ is covered by a sequence of compact sets. This however yields a contradiction by \cite[Theorem I.2.1]{Arch}.
\end{proof}
\begin{remark}
For any infinite non-discrete metric space $M$ its space $C_p(M)$ contains a complemented (so closed) copy of $(c_0)_p$ (see \cite{BKS1}). It follows from Proposition~\ref{prop_lip0p_c0p} that this copy of $(c_0)_p$ always contains a non-Lipschitz function.
\end{remark}

We will now compute densities of spaces of Lipschitz functions endowed with various topologies weaker than the compact-open topology. The next theorem is an extension of Noble's \cite[Theorem 1]{Nob71}.

\begin{theorem}\label{theorem:l_dens_tp_tk}
Let $M$ be a metric space. Let $\tau$ be any topology on the space $C(M)$ laying between the pointwise topology and the compact-open topology, i.e., $\tau_p\subseteq\tau\subseteq\tau_k$. Then,
\begin{enumerate}
	\item the space $\LIP(M)$ is dense in $(C(M),\tau)$, and
	\item $ww(M)=d(C(M),\tau)=d(\LIP(M),\tau)\le d(M)$.
\end{enumerate}
\end{theorem}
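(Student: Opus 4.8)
The plan is to pin every density appearing in the statement to $ww(M)$ by a squeezing argument, using throughout that the density of a topological space does not decrease when the topology is refined. Write $\kappa:=ww(M)$; I assume $M$ is infinite, the finite case being degenerate. I would prove two things: \emph{(a)} $ww(M)\le d(C(M),\tau_p)$; and \emph{(b)} there is a $\tau_k$-dense family $\mathcal H\subseteq\LIP(M)$ with $|\mathcal H|\le\kappa$. Part~(1) is then immediate from~(b) (indeed $\mathcal H\subseteq\LIP(M)$ is already $\tau_k$-dense in $C(M)$, hence $\tau$-dense for every coarser $\tau\subseteq\tau_k$), and this in turn gives $d(C(M),\tau)\le d(\LIP(M),\tau)$ for all such $\tau$. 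Consequently, for any $\tau_p\subseteq\tau\subseteq\tau_k$,
\[
ww(M)\overset{(a)}{\le}d(C(M),\tau_p)\le d(C(M),\tau)\le d(\LIP(M),\tau)\le d(\LIP(M),\tau_k)\le|\mathcal H|\le\kappa,
\]
so all these cardinals equal $ww(M)$; and $ww(M)\le w(M)=d(M)$ because $M$ is metric. This yields~(2).

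\textbf{Proof of (a).} This is the easy half of the classical identity $d(C_p(X))=ww(X)$ (Noble \cite{Nob71}). If $\mathcal D$ is $\tau_p$-dense in $C(M)$, then $\mathcal D$ separates the points of $M$: for $x\ne y$ the set $\{g\in C(M):g(x)\ne g(y)\}$ is $\tau_p$-open and nonempty (as $M$ is Tychonoff), hence meets $\mathcal D$. Therefore $x\mapsto(g(x))_{g\in\mathcal D}$ is a continuous injection of $M$ into $\mathbb R^{\mathcal D}$, a space of weight $|\mathcal D|$, so $ww(M)\le|\mathcal D|$; minimising over $\mathcal D$ gives~(a).

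\textbf{Proof of (b).} Fix a continuous bijection $\varphi\colon M\to Y$ with $Y$ Tychonoff and $w(Y)=\kappa$, compose it with an embedding $Y\hookrightarrow[0,1]^\kappa$, and write $\varphi=(\varphi_i)_{i<\kappa}$ with each $\varphi_i\colon M\to[0,1]$ continuous. The key point is that $\varphi$ restricts to a \emph{homeomorphism} on each compact $K\subseteq M$ (a continuous bijection from a compact space onto a Hausdorff one). Hence, given $f\in C(M)$ and compact $K\subseteq M$, the function $f\circ(\varphi|_K)^{-1}\in C(\varphi(K))$ extends, by the Tietze theorem on the compact Hausdorff space $[0,1]^\kappa$, to some $G\in C([0,1]^\kappa)$ with $G\circ\varphi=f$ on $K$; approximating $G$ uniformly by a rational-coefficient polynomial in finitely many coordinate projections (Stone--Weierstrass) produces a function $h$ which is a rational-coefficient polynomial in finitely many of the $\varphi_i$ and has $\sup_K|f-h|$ arbitrarily small. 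Since the sets $\{h:\sup_K|h-f|<\varepsilon\}$ ($K$ compact, $\varepsilon>0$) form a neighbourhood base at $f$ in $\tau_k$, the family $\mathcal G$ of all rational-coefficient polynomials in finitely many $\varphi_i$'s is $\tau_k$-dense in $C(M)$, consists of \emph{bounded} functions, and has cardinality $\kappa^{<\omega}\cdot\aleph_0=\kappa$. Finally I would Lipschitz-regularise $\mathcal G$ by inf-convolution with the metric: for $h\in\mathcal G$ and $n\ge1$ set $R_n(h)(x)=\inf_{y\in M}\bigl(h(y)+n\rho(x,y)\bigr)$, which is finite (as $h$ is bounded) and $n$-Lipschitz; a routine estimate, splitting the infimum according to whether $\rho(x,y)$ lies below or above a modulus-of-continuity threshold for $h$ on $K$, shows $R_n(h)\to h$ uniformly on every compact $K\subseteq M$. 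Thus $\mathcal H=\{R_n(h):h\in\mathcal G,\ n\ge1\}\subseteq\LIP(M)$ has $|\mathcal H|\le\kappa$, and each $h\in\mathcal G$ lies in the $\tau_k$-closure of $\mathcal H$, so $\mathcal H$ is $\tau_k$-dense; this proves~(b).

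\textbf{Main obstacle.} The delicate step is the upper bound~(b): the weak weight $ww(M)$ is a purely topological invariant, with no \emph{a priori} link to the metric, so it is not evident that the \emph{Lipschitz} functions — and not merely all continuous functions — already have $\tau_k$-density at most $ww(M)$. The inf-convolution $R_n$ is precisely the device that converts a small family of continuous functions dense for the coarse (compact-open) topology into a small family of \emph{Lipschitz} functions with the same property. Everything else (the compact-open neighbourhood base, Tietze on $[0,1]^\kappa$, Stone--Weierstrass, and the monotonicity of density under refinement of the topology) is routine.
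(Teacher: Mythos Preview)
Your proof is correct and the overall squeezing framework (pin everything between $ww(M)\le d(C_p(M))$ and an explicit $\tau_k$-dense family in $\LIP(M)$ of size $ww(M)$) coincides with the paper's, but the construction of the small Lipschitz family in~(b) is genuinely different. The paper works directly with a base $\mathcal B$ of the target space $Y$ and, for each pair $U,V\in\mathcal B$ with $\overline U\cap\overline V=\emptyset$, builds by hand bounded Lipschitz Urysohn-type functions $f^{U,V}_n(x)=\rho(x,G^{U,V}_n)/(\rho(x,G^{U,V}_n)+\rho(x,H^{U,V}))$ separating the corresponding preimages; rational polynomials in these (plus $\mathbf 1$) are already in $\LIP(M)$, so Stone--Weierstrass for $\tau_k$ finishes at once. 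You instead embed $Y\hookrightarrow[0,1]^\kappa$, use coordinate projections (which are merely continuous, not Lipschitz) to obtain a $\tau_k$-dense family $\mathcal G$ via Tietze $+$ Stone--Weierstrass, and only then force Lipschitzness by inf-convolution $R_n(h)(x)=\inf_{y\in M}(h(y)+n\rho(x,y))$. Your two-step approach cleanly decouples the purely topological density argument from the metric regularisation and invokes a standard tool (the Moreau--Yosida envelope); the paper's one-step approach is more explicit and, because it produces \emph{bounded} Lipschitz functions from the outset, is immediately recycled in the next corollary to get the same density identities for $\lip(M)$ rather than $\LIP(M)$. One minor remark: your $R_n(h)$ are in general unbounded below is false---they are bounded since $h$ is bounded---but they need not vanish at a base point, so if you later want the analogous statement for $\lip_0(M)$ you will still need the projection trick the paper uses in Corollary~3.10.
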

\begin{proof}
Since $M$ is metric, $d(M)=w(M)\ge ww(M)$. By \cite[Theorem 1]{Nob71}, $ww(M)=d(C(M),\tau)$.

We first show that $ww(M)\ge d(\LIP(M),\tau)$. Since $d(\LIP(M),\tau)\le d(\LIP(M),\tau_k)$, we may assume that $\tau=\tau_k$. Let $Y$ be a Tychonoff space such that there exists a continuous bijection $\varphi\colon M\rightarrow Y$ and a base $\mathcal{B}$ of the topology of $Y$ such that $|\mathcal{B}|=ww(M)$.

Let $U,V\in\mathcal{B}$ be a pair of non-empty sets such that $\overline{U}\cap\overline{V}=\emptyset$ and put $G^{U,V}=\varphi^{-1}[U]$ and $H^{U,V}=\varphi^{-1}[V]$. Of course, $\overline{G^{U,V}}\cap\overline{H^{U,V}}=\emptyset$. For each $n>0$ put also
\[G^{U,V}_n=\bigcup\Big\{B\big(x,1/(3n^2)\big)\colon\ x\in G^{U,V},\ \rho(x,H^{U,V})\ge1/n\Big\}\cap G^{U,V}.\]
We have $G^{U,V}_n\subseteq G^{U,V}_{n+1}\subseteq G^{U,V}$. Also, if $G^{U,V}_n\neq\emptyset$, then $\rho(x,G^{U,V}_n)\ge1/n-{1}/{(3n^2)}$ for every $x\in H^{U,V}$.

For each $n>0$ such that $G^{U,V}_n\neq\emptyset$, define a Urysohn function $f^{U,V}_n\colon M\rightarrow[0,1]$ for the sets $\overline{G^{U,V}_n}$ and $\overline{H^{U,V}}$ in the standard way:
\[f^{U,V}_n(x)=\frac{\rho(x,G^{U,V}_n)}{\rho(x,G^{U,V}_n)+\rho(x,H^{U,V})}\]
for all $x\in M$. Also, for each $n>0$ such that $G^{U,V}_n=\emptyset$, simply set $f^{U,V}_n(x)=0$ for all $x\in M$.
Note that in either case $f^{U,V}_n$ is Lipschitz (see e.g.~\cite[Proposition~2.1.1]{CMN}).

Since $\overline{G^{U,V}}\cap\overline{H^{U,V}}=\emptyset$, for every $x\in G^{U,V}$ there is $n>0$ such that $x\in G^{U,V}_n$ and hence $G^{U,V}=\bigcup_{n=1}^{\infty}G^{U,V}_n$. Consequently, for every $x\in G^{U,V}$ and $y\in H^{U,V}$ there is $n>0$ such that $f^{U,V}_n(x)=0$ and $f^{U,V}_n(y)=1$, that is, $f^{U,V}_n$ \emph{separates} $x$ and $y$.

Put
\[\mathcal{D}=\big\{f^{U,V}_n\colon\ U,V\in\mathcal{B},\ U\neq\emptyset\neq V,\ \overline{U}\cap\overline{V}=\emptyset,\ n>0\big\}.\]
By the argument presented in the previous paragraph, $\mathcal{D}$ separates points, as for each $x\neq y\in M$ we can find $U,V\in\mathcal{B}$ such that $\varphi(x)\in U$, $\varphi(y)\in V$, and $\overline{U}\cap\overline{V}=\emptyset$. Hence, by the Stone--Weierstrass theorem for the compact-open topology (see \cite[Problem 44B]{Willard}), the algebra generated by $\mathcal{D}$ and the constant function $\textbf{1}\colon M\rightarrow\{1\}$ is dense in $C_k(M)$. Hence, the set $\mathcal{E}_M$ of all finite polynomial combinations, with rational coefficients, of members of $\mathcal{D}\cup\{\textbf{1}\}$ is dense in $C_k(M)$. Since $\mathcal{E}_M\subseteq\LIP(M)$, $\mathcal{E}_M$ is also dense in $(\LIP(M),\tau_k)$. As $|\mathcal{E}_M|\le|\mathcal{B}|=ww(M)$, we get that $d(\LIP(M),\tau_k)\le ww(M)$.

\medskip

Since $\mathcal{E}_M$ is dense in $(C(M),\tau_k)$ and $\mathcal{E}_M\subseteq\LIP(M)$, $\LIP(M)$ is dense in $(C(M),\tau_k)$. Consequently, $\LIP(M)$ is dense in $(C(M),\tau)$ for any topology $\tau$ such that $\tau_p\subseteq\tau\subseteq\tau_k$. Thus, (1) holds.

\medskip

Since $\LIP(M)$ is dense in $C_p(M)$, $ww(M)=d(C_p(M))\le d(\LIP(M),\tau_p)$, and hence for any topology $\tau$ on $\LIP(M)$ such that $\tau_p\subseteq\tau\subseteq\tau_k$ we have
\[ww(M)\le d(\LIP(M),\tau_p)\le d(\LIP(M),\tau)\le d(\LIP(M),\tau_k)\le ww(M).\]
Thus, $ww(M)=d(\LIP(M),\tau)$. In particular, we get that $d(M)\ge ww(M)=d(C(M),\tau)=d(\LIP(M),\tau)$, so (2) holds, too.
\end{proof}

\begin{remark}
The inequality in Theorem~\ref{theorem:l_dens_tp_tk}.(2) might be strict. E.g., if $M$ is a discrete metric space of cardinality $2^{\aleph_0}$, then, by the Hewitt--Marczewski--Pondiczery theorem, $d(C_p(M))=d(\mathbb{R}^M)=\aleph_0$, whereas $d(M)=|M|=2^{\aleph_0}>\aleph_0$, and so $d(\LIP(M),\tau)<d(M)$ for every topology $\tau_p\subseteq\tau\subseteq\tau_k$.
\end{remark}

For a metric space $M$, the space $\mathcal{E}_M$---constructed in the proof of Theorem~\ref{theorem:l_dens_tp_tk}---consists of bounded Lipschitz functions on $M$, therefore $\mathcal{E}_M$ is a dense subset of $(\lip(M),\tau_k)$ and hence $d(\lip(M),\tau_k)\le ww(M)$. Also, it follows that $\lip(M)$ is dense in $C_k(M)$ and so in $C_p(M)$, hence $d(\lip(M),\tau_p)\ge d(C_p(M))=ww(M)$. Consequently, we get the following result.

\begin{corollary}\label{cor:lip_dens_tp_tk}
Let $M$ be a metric space. Let $\tau$ be any topology on the space $C(M)$ laying between the pointwise topology and the compact-open topology, i.e., $\tau_p\subseteq\tau\subseteq\tau_k$. Then,
\begin{enumerate}
	\item the space $\lip(M)$ is dense in $(C(M),\tau)$, and
	\item $d(C(M),\tau)=d(\lip(M),\tau)$.
\end{enumerate}
\end{corollary}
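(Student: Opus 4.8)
The whole argument is already foreshadowed by the paragraph preceding the statement, so the plan is simply to make that observation precise. First I would recall the set $\mathcal{E}_M$ constructed inside the proof of Theorem~\ref{theorem:l_dens_tp_tk}: it is the collection of all finite polynomial combinations with rational coefficients of the Urysohn functions $f^{U,V}_n$ together with the constant function $\mathbf 1$, it has cardinality at most $ww(M)$, and by the Stone--Weierstrass argument given there it is dense in $C_k(M)$. The key point to notice --- and the only point that is not pure bookkeeping --- is that every $f^{U,V}_n$ takes values in $[0,1]$, hence is bounded, and that the product of two bounded Lipschitz functions on a metric space is again bounded and Lipschitz (from $|f(x)g(x)-f(y)g(y)|\le \|f\|_\infty\lip(g)\rho(x,y)+\|g\|_\infty\lip(f)\rho(x,y)$); since sums and rational multiples obviously preserve boundedness and the Lipschitz property, the algebra generated by these functions is contained in $\lip(M)$, i.e. $\mathcal{E}_M\subseteq\lip(M)$.

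With this in hand, part~(1) is immediate: $\mathcal{E}_M$ is dense in $C_k(M)$ and contained in $\lip(M)\subseteq C(M)$, so $\lip(M)$ is dense in $C_k(M)$, and therefore dense in $(C(M),\tau)$ for every $\tau$ with $\tau_p\subseteq\tau\subseteq\tau_k$, because a set dense in a finer topology is dense in every coarser one.

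For part~(2) I would argue the two inequalities separately. On the one hand $\mathcal{E}_M$, being $\tau_k$-dense in $\lip(M)$, is also $\tau$-dense there, so $d(\lip(M),\tau)\le|\mathcal{E}_M|\le ww(M)$. On the other hand, by part~(1) any $\tau$-dense subset $A$ of $\lip(M)$ is $\tau$-dense in $C(M)$ (taking closures in $C(M)$, $\overline{A}\supseteq\overline{\lip(M)}=C(M)$), whence $d(C(M),\tau)\le d(\lip(M),\tau)$. Combining these with the equality $d(C(M),\tau)=ww(M)$ supplied by Theorem~\ref{theorem:l_dens_tp_tk}.(2) (Noble's theorem) yields $ww(M)\le d(\lip(M),\tau)\le ww(M)$, i.e. $d(C(M),\tau)=d(\lip(M),\tau)$. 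There is no genuine difficulty here: the only substantive step is the verification in the first paragraph that the dense family exhibited for $C_k(M)$ in Theorem~\ref{theorem:l_dens_tp_tk} can already be taken to live inside the smaller space $\lip(M)$ of \emph{bounded} Lipschitz functions, and everything else is routine manipulation of density characters.
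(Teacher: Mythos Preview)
Your proof is correct and follows essentially the same route as the paper: you use that the dense family $\mathcal{E}_M$ from Theorem~\ref{theorem:l_dens_tp_tk} already lies in $\lip(M)$ (adding the explicit verification that products of bounded Lipschitz functions are bounded Lipschitz), and then chase density characters via $d(C(M),\tau)=ww(M)$. The only cosmetic difference is that the paper phrases the lower bound as $d(\lip(M),\tau_p)\ge d(C_p(M))=ww(M)$ and then sandwiches, while you state the inequality $d(C(M),\tau)\le d(\lip(M),\tau)$ directly for arbitrary $\tau$; these are the same argument.
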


For a topological space $X$ and a point $e\in X$, set $C_e(X)=\{f\in C(X)\colon f(e)=0\}$. 

\begin{corollary}\label{cor:lip0_dens_tp_tk}
Let $M$ be a metric space with the base point $e$. Let $\tau$ be any topology on the space $C(M)$ laying between the pointwise topology and the compact-open topology, i.e., $\tau_p\subseteq\tau\subseteq\tau_k$. Then,
\begin{enumerate}
	\item $(\lip_0(M),\tau)$ is dense in $(C_e(M),\tau)$, and
	\item $d(C(M),\tau)=d(C_e(M),\tau)=d(\lip_0(M),\tau)$.
\end{enumerate}
\end{corollary}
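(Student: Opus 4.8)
The plan is to follow the pattern of Corollary~\ref{cor:lip_dens_tp_tk}, the only new point being that the canonical small dense sets produced for $C(M)$ need not vanish at the base point $e$, so one must subtract off the value at $e$. Recall from Theorem~\ref{theorem:l_dens_tp_tk} and its proof that $d(C(M),\tau)=d(\LIP(M),\tau)=ww(M)$, and that there is a family $\mathcal{E}_M$ of bounded Lipschitz functions on $M$ with $|\mathcal{E}_M|\le ww(M)$ which is dense in $(C(M),\tau_k)$, hence in $(C(M),\tau)$ and in $C_p(M)$.

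To obtain (1) together with the upper bounds, I would put $\mathcal{E}_M^0=\{g-g(e)\textbf{1}\colon g\in\mathcal{E}_M\}$, where $\textbf{1}$ is the constant function $1$. Each $g-g(e)\textbf{1}$ is a bounded Lipschitz function vanishing at $e$, so $\mathcal{E}_M^0\subseteq\lip_0(M)\subseteq C_e(M)$ and $|\mathcal{E}_M^0|\le ww(M)$. First I would check that $\mathcal{E}_M^0$ is dense in $(C_e(M),\tau_k)$: given $f\in C_e(M)$, a compact $K\subseteq M$ that we may assume contains $e$, and $\varepsilon>0$, pick $g\in\mathcal{E}_M$ with $\sup_{x\in K}|g(x)-f(x)|<\varepsilon/2$; then $|g(e)|=|g(e)-f(e)|<\varepsilon/2$, so $h=g-g(e)\textbf{1}\in\mathcal{E}_M^0$ satisfies $\sup_{x\in K}|h(x)-f(x)|\le\sup_{x\in K}|g(x)-f(x)|+|g(e)|<\varepsilon$. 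Since $\tau\subseteq\tau_k$, the set $\mathcal{E}_M^0$ is then $\tau$-dense in $C_e(M)$; as $\mathcal{E}_M^0\subseteq\lip_0(M)\subseteq C_e(M)$, this proves (1) and also shows $\mathcal{E}_M^0$ is $\tau$-dense in $\lip_0(M)$, whence $d(\lip_0(M),\tau)\le ww(M)$ and $d(C_e(M),\tau)\le ww(M)$.

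For the reverse inequalities I would use that $\tau_p$ is translation invariant: the map $f\mapsto(f-f(e)\textbf{1},\,f(e))$ is a homeomorphism of $C_p(M)$ onto $(C_e(M),\tau_p)\times\mathbb{R}$, its inverse $(g,t)\mapsto g+t\textbf{1}$ and both its coordinate functions being $\tau_p$-continuous. Since $d(C_p(M))=ww(M)$ by Theorem~\ref{theorem:l_dens_tp_tk} and $d(\mathbb{R})=\aleph_0$, it follows that $d(C_e(M),\tau_p)=ww(M)$. As $\tau\supseteq\tau_p$ this gives $d(C_e(M),\tau)\ge ww(M)$, and since by (1) the space $\lip_0(M)$ is $\tau_p$-dense in $C_e(M)$, every $\tau_p$-dense subset of $\lip_0(M)$ is $\tau_p$-dense in $C_e(M)$, so $d(\lip_0(M),\tau)\ge d(\lip_0(M),\tau_p)\ge d(C_e(M),\tau_p)=ww(M)$. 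Combined with the upper bounds and the equality $d(C(M),\tau)=ww(M)$, this yields (2). There is no genuine obstacle here; the one point needing care — and the reason the proof of Corollary~\ref{cor:lip_dens_tp_tk} cannot be copied verbatim — is that $\mathcal{E}_M$ is not contained in $\lip_0(M)$, so one really has to pass to $\mathcal{E}_M^0$, which is harmless precisely because $\tau_k$-approximation controls the value at every point of a prescribed compact set and we may insist that $e$ lie in it. If one prefers to avoid translation invariance in the last step, one can instead check directly that $\{a+q\textbf{1}\colon a\in A,\ q\in\mathbb{Q}\}$ is $\tau_p$-dense in $\LIP(M)$ whenever $A\subseteq\lip_0(M)$ is $\tau_p$-dense, forcing $|A|\ge d(\LIP(M),\tau_p)=ww(M)$.
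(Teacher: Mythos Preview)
Your proof is correct and follows essentially the same route as the paper's. The paper packages the ``subtract the value at $e$'' step more abstractly, observing that the algebraic decomposition $C(M)=\mathbb{R}\oplus C_e(M)$ is topological (citing Robertson--Robertson), so the projection $P\colon f\mapsto f-f(e)\textbf{1}$ is $\tau$-continuous and therefore carries the dense set $\LIP(M)$ onto the dense set $\lip_0(M)=P[\LIP(M)]$; you instead apply $P$ to $\mathcal{E}_M$ and verify $\tau_k$-density of $\mathcal{E}_M^0$ by a direct $\varepsilon$-argument on compacta containing $e$. For the lower bound the paper uses exactly the alternative you mention at the end---that $\mathbb{Q}+\lip_0(M)$ is $\tau_p$-dense in $\LIP(M)$---so your two arguments and the paper's are the same idea in slightly different dress.
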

\begin{proof}
We work with the topology $\tau$ on $C(M)$. The algebraic direct sum $C(M)=\mathbb{R}\oplus C_e(M)$ is topological (by \cite[Corollary on p. 96]{RobRob}), therefore the canonical projection $P\colon C(M)\rightarrow C_e(M)$ is continuous. Consequently, $P$ transfers dense subsets of $C(M)$ onto dense subsets of $C_e(M)$. Since $P[\LIP(M)]=\lip_0(M)$ and $\LIP(M)$ is dense in $C(M)$ (by Theorem~\ref{theorem:l_dens_tp_tk}.(1)), $\lip_0(M)$ is dense in $C_e(M)$. Thus, (1) holds.

By the continuity of $P$, we also have $d(\lip_0(M),\tau)\le d(\LIP(M),\tau)$. The subset $\mathbb{Q}+\lip_0(M)$ is dense in $(\LIP(M),\tau_p)$, so, by Theorem \ref{theorem:l_dens_tp_tk}.(2), it holds
\[d(\lip_0(M),\tau)\ge d(\lip_0(M),\tau_p)\ge d(\LIP(M),\tau_p)=d(\LIP(M),\tau).\] 
Consequently, $d(\lip_0(M),\tau)=d(\LIP(M),\tau)$. A similar argument shows that $d(C(M),\tau)=d(C_e(M),\tau)$. Hence, (2) holds as well.
\end{proof}

\begin{remark}\label{rem_cp_dens}
In the case of the pointwise topology $\tau_p$ we can provide a direct proof that $d(\lip_0(M)_p)\le d(M)$. Recall that for every infinite-dimensional Banach space $E$ its density with respect to the norm topology is equal to the weight of the dual unit ball $B_{E^*}$ with respect to the weak* topology, i.e., $d(E,\|\cdot\|)=w(B_{E^*},w^*)$. It follows that the unit ball $B_{\lip_0(M)}$ of $\lip_0(M)$ has density at most $d(\mathcal{F}(M))=d(M)$ with respect to the weak* topology and hence with respect to the pointwise topology (as both coincide on bounded subsets of $\lip_0(M)$). It follows that $d(\lip_0(M)_p)\le d(M)$.
\end{remark}

Theorem~\ref{theorem:l_dens_tp_tk} and Corollaries~\ref{cor:lip_dens_tp_tk} and~\ref{cor:lip0_dens_tp_tk} immediately yield the following result.

\begin{theorem}\label{cor:lip_lip0_dens_tp_tk}
Let $M$ be a metric space with the base point $e$. Let $\tau$ be any topology on the space $C(M)$ laying between the pointwise topology and the compact-open topology, i.e., $\tau_p\subseteq\tau\subseteq\tau_k$. Then,
\[ww(M)=d(C(M),\tau)=d(\LIP(M),\tau)=d(\lip(M),\tau)=d(\lip_0(M),\tau)\le d(M).\]
\end{theorem}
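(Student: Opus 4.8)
The plan is to obtain this statement purely by concatenating the three results that immediately precede it, so no new construction is required. First I would invoke Theorem~\ref{theorem:l_dens_tp_tk}: since $M$ is metric we have $d(M)=w(M)\ge ww(M)$, and part~(2) of that theorem already supplies, for every topology $\tau$ with $\tau_p\subseteq\tau\subseteq\tau_k$, the chain $ww(M)=d(C(M),\tau)=d(\LIP(M),\tau)\le d(M)$. This settles the two outer members ($ww(M)$ and the bound $\le d(M)$) together with the $\LIP(M)$-term.

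Next I would bring in the bounded versions. Corollary~\ref{cor:lip_dens_tp_tk}.(2) gives $d(C(M),\tau)=d(\lip(M),\tau)$, while Corollary~\ref{cor:lip0_dens_tp_tk}.(2) gives $d(C(M),\tau)=d(C_e(M),\tau)=d(\lip_0(M),\tau)$. Substituting these equalities into the chain from the previous step yields exactly
\[ww(M)=d(C(M),\tau)=d(\LIP(M),\tau)=d(\lip(M),\tau)=d(\lip_0(M),\tau)\le d(M),\]
which is the assertion; the intermediate equality $d(C(M),\tau)=d(C_e(M),\tau)$ is an extra byproduct not appearing in the displayed formula.

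All the genuine work lies upstream in Theorem~\ref{theorem:l_dens_tp_tk}, and I would expect that to be the real obstacle were it not already proved. There one must (i) quote Noble's theorem \cite[Theorem~1]{Nob71} for $ww(M)=d(C(M),\tau)$ on an arbitrary Tychonoff space, and (ii) upgrade the dense subset to a dense subset consisting of \emph{Lipschitz} functions of cardinality $ww(M)$. Step~(ii) is the heart: starting from a continuous bijection $\varphi$ onto a Tychonoff space of weight $ww(M)$ with base $\mathcal{B}$, one builds for each admissible pair $U,V\in\mathcal{B}$ with $\overline U\cap\overline V=\emptyset$ an exhaustion $(G^{U,V}_n)_n$ of $\varphi^{-1}[U]$ by sets bounded away from $\varphi^{-1}[V]$, takes the Lipschitz Urysohn functions $f^{U,V}_n$ separating them, checks that the resulting countably-indexed family $\mathcal{D}$ still separates points, and then applies the Stone--Weierstrass theorem for $\tau_k$ to make the rational polynomial algebra $\mathcal{E}_M$ generated by $\mathcal{D}\cup\{\textbf{1}\}$ dense in $C_k(M)$, with $|\mathcal{E}_M|\le|\mathcal{B}|=ww(M)$. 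The passage to $\lip(M)$, $\lip_0(M)$, and $C_e(M)$ in the corollaries is then routine: $\mathcal{E}_M$ already consists of bounded Lipschitz functions, and for the base-point versions one uses that $C(M)=\mathbb{R}\oplus C_e(M)$ is a topological direct sum, so the canonical projection is continuous and carries dense sets to dense sets.
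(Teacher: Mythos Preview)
Your proposal is correct and matches the paper's own proof exactly: the paper simply states that Theorem~\ref{theorem:l_dens_tp_tk} together with Corollaries~\ref{cor:lip_dens_tp_tk} and~\ref{cor:lip0_dens_tp_tk} immediately yield the result. Your additional paragraph summarizing the upstream arguments is accurate but not needed for the proof of this particular theorem.
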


\begin{corollary}
Let $M$ be a metric space with the base point $e$. Let
\[(X,Y)\in\big\{(\LIP(M),C(M)),\ (\lip(M),C(M)),\ (\lip_0(M),C_e(M))\big\}.\]
Let $\tau$ be any topology on $Y$ contained between the pointwise topology and the compact-open topology. Then, $X$ is closed in $Y$ with respect to $\tau$ if and only if $M$ is finite.
\end{corollary}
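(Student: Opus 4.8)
The plan is to reduce the corollary to the single assertion that $\LIP(M)\subsetneq C(M)$ for every infinite metric space $M$, and to read off everything else from the density results already proved in this section.

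The first observation is that in each of the three cases $X$ is $\tau$-dense in $Y$: for $(\LIP(M),C(M))$ by Theorem~\ref{theorem:l_dens_tp_tk}(1), for $(\lip(M),C(M))$ by Corollary~\ref{cor:lip_dens_tp_tk}(1), and for $(\lip_0(M),C_e(M))$ by Corollary~\ref{cor:lip0_dens_tp_tk}(1) (it suffices to use the compact-open case, since denseness passes to every coarser topology). A $\tau$-dense subset of $Y$ is $\tau$-closed precisely when it equals $Y$, so the corollary becomes: $X=Y$ if and only if $M$ is finite. If $M$ is finite, then $C(M)$ is finite-dimensional and, the supremum defining $\lip(f)$ being a maximum over finitely many pairs, every real function on $M$ is automatically bounded and Lipschitz; hence $\LIP(M)=\lip(M)=C(M)$ and $\lip_0(M)=C_e(M)$, so $X=Y$ and there is nothing to prove.

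For the converse I would first note that a single continuous non-Lipschitz function $f$ on $M$ handles all three pairs at once: it witnesses $\LIP(M)\ne C(M)$ and, since $\lip(M)\subseteq\LIP(M)$, also $\lip(M)\ne C(M)$; and $g:=f-f(e)\mathbf{1}$ witnesses $\lip_0(M)\ne C_e(M)$, because $g\in C_e(M)$ while $\lip(g)=\lip(f)=\infty$. It remains to construct $f\in C(M)\setminus\LIP(M)$ for an arbitrary infinite metric space $M$, and here I would split into two cases. If $M$ has a non-isolated point $p$, take $f=\sqrt{\rho(\cdot,p)}$, which is continuous as a composition of continuous maps; for any sequence $x_n\to p$ with $x_n\ne p$ one has $|f(x_n)-f(p)|/\rho(x_n,p)=\rho(x_n,p)^{-1/2}\to\infty$, so $\lip(f)=\infty$. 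If $M$ has no non-isolated point, then $M$ is discrete, hence every real-valued function on $M$ is continuous; fixing distinct points $z_0,z_1,\dots\in M$ and setting $f(z_n)=n\,\rho(z_n,z_0)$ for $n\ge 1$ and $f\equiv 0$ on the rest of $M$, one obtains a continuous function with $|f(z_n)-f(z_0)|/\rho(z_n,z_0)=n\to\infty$, so again $\lip(f)=\infty$.

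No individual step is genuinely delicate: the only things requiring a little attention are quoting the correct density statement for each of the three pairs, carrying out the reduction to the single inequality $\LIP(M)\ne C(M)$ via the splitting off of the constant $f(e)\mathbf{1}$, and checking that ``$M$ has a non-isolated point'' versus ``$M$ is discrete'' genuinely exhausts all infinite metric spaces while the exhibited witnesses lie in the stated function spaces. If one insists on naming the main obstacle, it is merely choosing the two-case construction of a continuous non-Lipschitz function as the right granularity; each case is then a one-line estimate.
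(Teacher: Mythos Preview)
Your proposal is correct and follows essentially the same approach as the paper: use the density results to reduce closedness to equality $X=Y$, observe this is automatic for finite $M$, and for infinite $M$ exhibit a continuous non-Lipschitz function. The paper's proof is a two-line sketch that simply asserts ``the existence of non-Lipschitz continuous functions on infinite metric spaces''; your two-case construction (non-isolated point via $\sqrt{\rho(\cdot,p)}$, discrete case via a function of unbounded slope) spells out exactly the detail the paper omits, and your reduction of the three pairs to the single inequality $\LIP(M)\ne C(M)$ via subtracting $f(e)\mathbf{1}$ is a clean way to handle all cases at once.
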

\begin{proof}
If $M$ is finite, then $X=Y$. The other direction follows from the density of $X$ in $Y$ (see the results above) and the existence of non-Lipschitz continuous functions on infinite metric spaces.
\end{proof}

We finish this section with the following folklore remark concerning densities and homeomorphisms of Lipschitz-free spaces.

\begin{remark}\label{rem_fm_homeo}
Recall that for every infinite metric space $M$ we have $d(\mathcal{F}(M))=d(M)$. 
Hence, by the Toru\'nczyk theorem, for two infinite metric spaces $M$ and $N$ the Banach spaces $\mathcal{F}(M)$ and $\mathcal{F}(N)$ are homeomorphic if and only if $d(M)=d(N)$.
\end{remark}

\section{Continuous mappings from and onto spaces related to Lipschitz functions\label{sec:operators}}

Recall that, by a theorem of Grothendieck \cite{Groth52}, for a compact space $K$ every uniformly bounded set in the Banach space $C(K)$  which is compact in $C_p(K)$ is weakly compact in $C(K)$. For any infinite metric space $M$ and its spaces $\lip_0(M)$ and $\lip_0(M)_p$ this theorem fails, as the unit ball $B_{\lip_0(M)}$ of $\lip_0(M)$ is compact in the pointwise topology (see \cite[Theorem 2.37]{Weaver}) but not weakly compact (as $\lip_0(M)$ is not reflexive). We also have the following negative result.

\begin{proposition}\label{prop:not_weakly_lindelof}
Let $M$ be an infinite metric space. Then, both the space $\lip_0(M)$ and its unit ball $B_{\lip_0(M)}$ are not weakly Lindel\"of.
\end{proposition}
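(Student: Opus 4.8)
The plan is to exploit the fact, established in Theorem~\ref{cor1}, that $\lip_0(M)$ contains an isomorphic copy of $\ell_\infty(d(M))$, hence in particular (since $M$ is infinite) an isomorphic copy of $\ell_\infty=\ell_\infty(\aleph_0)$. Recall that a subspace of a weakly Lindel\"of space need not be weakly Lindel\"of, so this alone is not enough; however, the copy of $\ell_\infty(d(M))$ is not merely a subspace but a \emph{complemented} subspace of $\lip_0(M)$. This follows from the construction in the proof of Theorem~\ref{thm:linfLip} (combined with the isomorphism $\lip_0(M)\simeq\lip(M^\dagger)$ used in Theorem~\ref{cor1}): the functions $f_\alpha$ have pairwise disjoint supports, so the operator $T\colon\ell_\infty(d(M))\to\lip_0(M)$ admits a bounded left inverse given by evaluating at the peak points $y_\alpha$ (suitably renormalised), which is a bounded operator into $\ell_\infty(d(M))$, and $T$ composed with this left inverse is a bounded projection onto the copy of $\ell_\infty(d(M))$. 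Alternatively one may simply invoke Rosenthal's \cite[Corollary~1.2]{Rosenthal}, which is already cited in Corollary~\ref{hano_ell1} and gives that the copy of $\ell_\infty(\Gamma)$ inside $\lip_0(M)$ is complemented.

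The second ingredient is that the weak Lindel\"of property is inherited by closed subspaces, and, more to the point, by \emph{complemented} subspaces: if $E=F\oplus G$ topologically and $E$ is weakly Lindel\"of, then the bounded linear projection $P\colon E\to F$ is weak-to-weak continuous, so $F=P[E]$ is a continuous image of a weakly Lindel\"of space and therefore weakly Lindel\"of. Hence, if $\lip_0(M)$ were weakly Lindel\"of, then its complemented subspace isomorphic to $\ell_\infty$ would be weakly Lindel\"of as well. But it is classical that $\ell_\infty$ is \emph{not} weakly Lindel\"of --- indeed $(\ell_\infty)_w$ is not even Lindel\"of, since $\ell_\infty$ contains an uncountable family behaving like a discrete set in the weak topology; one standard witness is the family $(\mathbf{1}_A)_{A\subseteq\mathbb{N}}$ of indicator functions of subsets of $\mathbb{N}$, which is weakly discrete and closed, yielding a closed discrete subspace of size $2^{\aleph_0}$ in $(\ell_\infty)_w$, so $(\ell_\infty)_w$ is not Lindel\"of. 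This contradiction establishes that $\lip_0(M)$ is not weakly Lindel\"of.

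For the unit ball, I would argue similarly but work inside $B_{\lip_0(M)}$ directly. The copy of $\ell_\infty$ produced above sits inside $\lip_0(M)$ via $T$ with $\|T\|$ and $\|T^{-1}\|$ controlled; hence for a suitable constant $c>0$ the image $T[c\,B_{\ell_\infty}]$ is a weakly closed subset of $B_{\lip_0(M)}$ homeomorphic, in the weak topologies, to $c\,B_{\ell_\infty}$ (here one uses that on a bounded set a linear isomorphism onto its image is a weak homeomorphism onto its image, and that the image is weakly closed being the intersection of $B_{\lip_0(M)}$ with the complemented --- hence weakly closed --- subspace $T[\ell_\infty]$ rescaled appropriately, or more simply being weakly compact-free but norm-closed and convex). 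Since a closed subspace of a Lindel\"of space is Lindel\"of, it suffices to recall that $(B_{\ell_\infty},w)$ is not Lindel\"of: the family $(\mathbf{1}_A)_{A\subseteq\mathbb{N}}$ lies in $B_{\ell_\infty}$ and is weakly closed and discrete there, so $(B_{\ell_\infty})_w$ contains an uncountable closed discrete subspace and is not Lindel\"of. Hence $B_{\lip_0(M)}$ is not weakly Lindel\"of either.

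The main obstacle is the bookkeeping around complementation and weak closedness: one must be careful that ``weakly Lindel\"of'' is not in general inherited by arbitrary subspaces, so the argument genuinely needs the complemented copy of $\ell_\infty$ (for the whole space) and the weak closedness of the rescaled ball of that copy inside $B_{\lip_0(M)}$ (for the unit ball). Once the complemented copy from Theorem~\ref{cor1} and \cite[Corollary~1.2]{Rosenthal} is in hand and the standard fact that $(\ell_\infty)_w$ and $(B_{\ell_\infty})_w$ fail to be Lindel\"of is quoted, the rest is routine. One should also note that for the unit ball a cleaner route is available: $B_{\lip_0(M)}$ is $\tau_p$-compact (hence $\tau_p$-Lindel\"of), but the weak topology on it is strictly finer and, by the above, not Lindel\"of, which already records the contrast with Grothendieck's theorem mentioned just before the proposition.
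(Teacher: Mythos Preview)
Your overall strategy is sound, but there is one concrete error and one unnecessary detour. The detour: you worry that the weak Lindel\"of property is not inherited by subspaces and therefore reach for a \emph{complemented} copy of $\ell_\infty$, but for a closed linear subspace $Y\subseteq X$ this inheritance is automatic: $Y$ is convex and norm-closed, hence weakly closed, and by Hahn--Banach the relative topology from $X_w$ coincides with $\sigma(Y,Y^*)$; closed subspaces of Lindel\"of spaces are Lindel\"of. This is exactly what the paper does---it simply notes that $\lip_0(M)$ contains a (norm-)closed copy of $\ell_\infty$ and cites Corson \cite[Example~1.(i)]{Corson} for the fact that $(\ell_\infty)_w$ is not Lindel\"of. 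The error: your proposed witness for the latter fact is false. The family $\{\mathbf{1}_A:A\subseteq\mathbb{N}\}$ is \emph{not} weakly discrete in $\ell_\infty$; indeed $\mathbf{1}_{\{n\}}\to 0=\mathbf{1}_\emptyset$ weakly, since for every $\mu\in\ell_\infty^*\cong ba(\mathbb{N})$ the Yosida--Hewitt decomposition gives $\mu(\{n\})=\mu_c(\{n\})\to0$ (the purely finitely additive part vanishes on singletons). So $\mathbf{1}_\emptyset$ is not isolated. You should replace this by a correct argument or, as the paper does, simply cite the classical reference.

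For the unit ball your embedding argument can be made to work (the set $T[cB_{\ell_\infty}]$ is convex and norm-closed, hence weakly closed, and weak-homeomorphic to $cB_{\ell_\infty}$), but it still relies on knowing that $(B_{\ell_\infty})_w$ is not Lindel\"of, which you again justify via the faulty indicator claim. The paper's route is both shorter and avoids this issue entirely: if $B_{\lip_0(M)}$ were weakly Lindel\"of, then so would each $nB_{\lip_0(M)}$, and $\lip_0(M)_w=\bigcup_{n\in\mathbb{N}} nB_{\lip_0(M)}$ would be a countable union of Lindel\"of subspaces, hence Lindel\"of---contradicting the first part. This reduction makes the second statement an immediate corollary of the first and sidesteps any delicate analysis inside the ball.
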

\begin{proof}
The space $\lip_0(M)$ is not weakly Lindel\"of, since it contains a closed copy of the space $\ell_\infty$, which is well-known to be not weakly Lindel\"of (see \cite[Example 1.(i)]{Corson}). Further, if $B_{\lip_0(M)}$ was weakly Lindel\"of, then $\lip_0(M)_w$ would be covered by a countable collection of Lindel\"of spaces, hence it would be itself a Lindel\"of space.
\end{proof}

We now proceed with a series of results concerning continuous (not necessarily linear) functions from or onto spaces of Lipschitz functions endowed with various topologies. We start with the following immediate corollary of Theorem~\ref{cor1}. Recall that a topological space $X$ is \emph{extremally disconnected} if the closure of any open subset of $X$ is open.

\begin{corollary}\label{cor:lip0m_ck}
For any infinite metric space $M$ there exists an extremally disconnected compact space $K$ with $d(K)=d(M)$ and $w(K)=2^{d(M)}$ and a continuous linear surjection $T\colon\lip_0(M)\rightarrow C(K)$.
\end{corollary}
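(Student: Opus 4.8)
The plan is to deduce Corollary~\ref{cor:lip0m_ck} directly from Theorem~\ref{cor1} together with a suitable choice of a compact space $K$ carrying a continuous linear surjection onto it from $\ell_\infty(\Gamma)$. By Theorem~\ref{cor1}, the Banach space $\lip_0(M)$ contains an isomorphic copy of $\ell_\infty(d(M))$; since this copy is a dual space (indeed $\ell_\infty(\Gamma)\simeq C(\beta(\Gamma_{\mathrm{disc}}))$ and $\beta(\Gamma_{\mathrm{disc}})$ is extremally disconnected, so $\ell_\infty(\Gamma)$ is a $1$-injective, in particular complemented in every superspace) the copy of $\ell_\infty(d(M))$ inside $\lip_0(M)$ is complemented. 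Write $\Gamma$ for a set of cardinality $d(M)$ and let $P\colon\lip_0(M)\to X$ be a bounded linear projection onto a subspace $X\simeq\ell_\infty(\Gamma)$.

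Next I would take $K=\beta\Gamma$, the \v{C}ech--Stone compactification of the discrete space of size $d(M)=|\Gamma|$. Then $C(K)=C(\beta\Gamma)$ is isometrically isomorphic to $\ell_\infty(\Gamma)$, the space $K$ is extremally disconnected (the \v{C}ech--Stone compactification of a discrete space is extremally disconnected), and its cardinal invariants are exactly the ones claimed: $\beta\Gamma$ contains $\Gamma$ as a dense subset, so $d(\beta\Gamma)\le|\Gamma|=d(M)$, and since $\beta\Gamma$ is infinite $d(\beta\Gamma)\ge\aleph_0$; in fact $d(\beta\Gamma)=|\Gamma|=d(M)$ because any dense subset of $\beta\Gamma$ must meet every isolated point of $\Gamma$. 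For the weight, $w(\beta\Gamma)=w(C(\beta\Gamma))$-type reasoning gives $w(\beta\Gamma)=2^{|\Gamma|}=2^{d(M)}$ (the standard computation $|\beta\Gamma|=2^{2^{|\Gamma|}}$ and $w(\beta\Gamma)=2^{|\Gamma|}$, or simply $w(\beta\Gamma)=d(C(\beta\Gamma))=d(\ell_\infty(\Gamma))=2^{|\Gamma|}$ via the identity $w(B_{E^*},w^*)=d(E)$ combined with $w(K)=w(C(K))$). So $K=\beta\Gamma$ has all the required properties.

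Finally, to produce the surjection: the composition of the isomorphism $\ell_\infty(\Gamma)\xrightarrow{\ \sim\ }X$, followed by inclusion $X\hookrightarrow\lip_0(M)$, has a left inverse, namely $P$ followed by the inverse isomorphism $X\xrightarrow{\ \sim\ }\ell_\infty(\Gamma)$. Thus there is a bounded linear surjection $\lip_0(M)\twoheadrightarrow\ell_\infty(\Gamma)\cong C(\beta\Gamma)=C(K)$; explicitly $T=\iota^{-1}\circ P$, where $\iota\colon\ell_\infty(\Gamma)\to X$ is the fixed isomorphism. This $T$ is the desired continuous linear surjection onto $C(K)$. I do not expect a genuine obstacle here: the only point requiring a little care is the verification of $d(K)$ and $w(K)$, which are classical facts about $\beta\Gamma$ (alternatively one cites, e.g., the Engelking/Hodel references already invoked in the paper), and the complementation of $\ell_\infty(\Gamma)$, which is the standard $1$-injectivity of $\ell_\infty(\Gamma)$. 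Everything else is the immediate algebra of splitting a complemented embedding.
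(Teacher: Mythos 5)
Your proposal is correct and follows essentially the same route as the paper: extract a complemented copy of $\ell_\infty(d(M))$ from Theorem~\ref{cor1}, identify $\ell_\infty(\Gamma)$ with $C(\beta\Gamma)$ for $\Gamma$ discrete of size $d(M)$, check that $K=\beta\Gamma$ is extremally disconnected with $d(K)=d(M)$ and $w(K)=2^{d(M)}$, and compose the projection with the isomorphisms. Your explicit justification of the complementation via the $1$-injectivity of $\ell_\infty(\Gamma)$ (rather than the vaguer ``it is a dual space'') is exactly the right way to fill in the step the paper leaves implicit.
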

\begin{proof}
By Theorem~\ref{cor1} $\lip_0(M)$ contains a complemented isomorphic copy $Y$ of the Banach space $\ell_\infty(d(M))$. Let $I\colon Y\rightarrow\ell_\infty(d(M))$ be an isomorphism and $P\colon\lip_0(M)\rightarrow Y$ a projection. It is well-known that there exists an isomorphism $J\colon\ell_\infty(d(M))\rightarrow C(K)$ of Banach spaces, where $K=\beta\Gamma$ is the \v{C}ech--Stone compactification of a discrete space $\Gamma$ of cardinality $|\Gamma|=d(M)$. $K$ is an extremally disconnected space with $d(K)=d(M)$ and $w(K)=2^{d(M)}$. Set $T=J\circ I\circ P$; then $T\colon\lip_0(M)\rightarrow C(K)$ is a continuous linear surjection.
\end{proof}

Note that for any vector space $E$ and any two linear topologies $\tau\subseteq\tau'$ on $E$ the identity map $(E,\tau')\rightarrow(E,\tau)$ is obviously a continuous linear surjection. In particular, Corollary~\ref{cor:lip0m_ck} yields a continuous linear surjection $\lip_0(M)\rightarrow(C(K),\tau)$ for any topology $\tau$ on $C(K)$ contained between the pointwise topology and the norm topology.

\begin{theorem}\label{theorem_lipp_onto_lipw}
  Let $M$ and $N$  be  infinite metric spaces and let $X$ be an infinite Tychonoff space. Let $\tau$ be any topology on $\lip_0(N)$ containing the weak topology, i.e. $w\subseteq\tau$, and let $\tau'$ be any topology on $C(X)$ containing the pointwise topology, i.e. $\tau_p\subseteq\tau'$. Then, there is no continuous surjection $T\colon\lip_0(M)_p\rightarrow Y$ for the following spaces $Y$:
\begin{enumerate}
	\item $Y=(\lip_{0}(N),\tau)$,
	\item $Y=(C(X),\tau')$.
\end{enumerate}
\end{theorem}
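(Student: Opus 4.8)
The plan is to exploit the fact that $\lip_0(M)_p$ is $\sigma$-compact, a smallness property that passes to continuous images. First I would note that, by \cite[Theorem~2.37]{Weaver}, the closed unit ball $B_{\lip_0(M)}$ is compact in the pointwise topology; hence each dilate $nB_{\lip_0(M)}$ is $\tau_p$-compact, and since $\lip_0(M)=\bigcup_{n\ge1}nB_{\lip_0(M)}$, the space $\lip_0(M)_p$ is a countable union of compact sets (this is the same observation that drives the proof of Proposition~\ref{prop_lip0p_c0p}). Thus if $T\colon\lip_0(M)_p\to Y$ is a continuous surjection onto any space $Y$, then $Y$ is $\sigma$-compact, and in particular Lindel\"of. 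It therefore remains to check that the two target spaces in the statement fail this property.

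For part (1), given a putative continuous surjection $T\colon\lip_0(M)_p\to(\lip_0(N),\tau)$ with $w\subseteq\tau$, I would compose it with the continuous identity map $(\lip_0(N),\tau)\to\lip_0(N)_w$ (continuous precisely because $\tau$ refines $w$). This realizes $\lip_0(N)_w$ as a continuous image of $\lip_0(M)_p$, hence as a $\sigma$-compact, and so Lindel\"of, space; but this contradicts Proposition~\ref{prop:not_weakly_lindelof}, which says that $\lip_0(N)$ is not weakly Lindel\"of. For part (2), I would similarly compose a putative continuous surjection $T\colon\lip_0(M)_p\to(C(X),\tau')$ with $\tau_p\subseteq\tau'$ with the continuous identity map $(C(X),\tau')\to C_p(X)$, obtaining that $C_p(X)$ is covered by countably many compact sets; by \cite[Theorem~I.2.1]{Arch} --- the tool already used in the proof of Proposition~\ref{prop_lip0p_c0p} --- this is impossible when $X$ is infinite.

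I do not anticipate a genuine obstacle here: once the $\sigma$-compactness of $\lip_0(M)_p$ is established, both cases collapse to known facts through the trivial continuity of identity maps between comparable topologies. The only mild point of care is the bookkeeping about the direction of the maps, making sure that it is the \emph{target} space (the weak topology of $\lip_0(N)$, respectively $C_p(X)$) whose covering properties are being constrained by the source, rather than the other way around.
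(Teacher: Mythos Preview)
The proposal is correct and follows essentially the same approach as the paper: both exploit the $\tau_p$-compactness of the unit ball to write $\lip_0(M)_p$ as a countable union of compact sets, push this cover forward through the surjection (after reducing to $\tau=w$ resp.\ $\tau'=\tau_p$ via the identity), and then invoke Proposition~\ref{prop:not_weakly_lindelof} for (1) and \cite[Theorem~I.2.1]{Arch} for (2). The only cosmetic difference is that you phrase the reduction as composing with the identity, whereas the paper states it as ``we may assume $\tau=w$ and $\tau'=\tau_p$''.
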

\begin{proof}
By the preceding remark concerning the identity mappings, we can assume that $\tau$ is the weak topology on $\lip_0(N)$ and $\tau'$ is the pointwise topology on $C(X)$, i.e. $\tau=w$ and $\tau'=\tau_p$.

Recall that the closed unit ball $B=B_{\lip_0(M)}$ of $\lip_{0}(M)$ is compact in the pointwise topology. Then, the sets $nB$, $n\in\mathbb{N}$, are also pointwise compact and they cover the space $\lip_{0}(M)_p$. Thus, if there exists a continuous surjection $T\colon \lip_{0}(M)_p\rightarrow \lip_{0}(N)_w$, then $\lip_{0}(N)_w=\bigcup_{n=0}^\infty T[nB]$, where each $T[nB]$ is weakly compact, and hence $\lip_{0}(N)$ is  Lindel\"of in the weak topology, which, by Proposition~\ref{prop:not_weakly_lindelof}, is not true. Thus, (1) holds.

By a similar argument as in (1), if there is a continuous surjection $\lip_{0}(M)_p\rightarrow C_p(X)$, then $C_p(X)$ can be covered by a sequence of compact sets, which, by \cite[Theorem I.2.1]{Arch}, means that $X$ is finite, which is a contradiction. Thus, (2) holds, too.
\end{proof}

For spaces $\lip_{0}(M)_p$ with $M$ separable we have the following sequential result, being a formal strengthening of Theorem~\ref{theorem_lipp_onto_lipw}.(1).

\begin{proposition}\label{prop_lipp_seq_onto_lipw}
Let $M$ and $N$ be infinite metric spaces. If $M$ is separable, then there is no sequentially continuous surjection $T\colon\lip_{0}(M)_p\rightarrow\lip_{0}(N)_w$.
\end{proposition}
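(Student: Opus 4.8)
The plan is to leverage the separability of $M$ to reduce everything to a statement about a sequentially dense countable set, and then to exploit a cardinality or sequential-covering obstruction on the target side, analogous to the argument of Theorem~\ref{theorem_lipp_onto_lipw}.(1) but carried out sequentially. First I would recall that the closed unit ball $B=B_{\lip_0(M)}$ is pointwise compact (by \cite[Theorem 2.37]{Weaver}), and since $M$ is separable, $\mathcal{F}(M)$ is separable, so on $B$ the weak* topology is metrizable; as the weak* and pointwise topologies coincide on bounded subsets of $\lip_0(M)$, the ball $B$ is in fact a \emph{compact metrizable} subset of $\lip_0(M)_p$, and hence so is each dilate $nB$. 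In particular $\lip_0(M)_p$ is a countable union $\bigcup_{n} nB$ of compact metrizable, hence separable and sequentially compact, subspaces; consequently $\lip_0(M)_p$ is itself separable and every sequence in it that lies in some fixed $nB$ has a convergent subsequence.

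Next, suppose toward a contradiction that $T\colon\lip_0(M)_p\to\lip_0(N)_w$ is a sequentially continuous surjection. The key step is to show this forces $\lip_0(N)_w$ to be ``small'' in a way it cannot be. I would argue that $T[nB]$ is, for each $n$, a \emph{weakly sequentially compact} subset of $\lip_0(N)$: indeed, given any sequence $(y_k)$ in $T[nB]$, pick preimages $x_k\in nB$; by sequential compactness of $nB$ in $\tau_p$ a subsequence $x_{k_j}\to x$ in $nB$, and then by sequential continuity of $T$ we get $y_{k_j}=T(x_{k_j})\to T(x)$ weakly. Thus $\lip_0(N)_w=\bigcup_n T[nB]$ is a countable union of weakly sequentially compact sets, so in particular $\lip_0(N)$ is \emph{weakly sequentially Lindel\"of} / covered by countably many relatively weakly countably compact pieces. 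The contradiction should now come from the fact, established in Theorem~\ref{cor1}, that $\lip_0(N)$ contains an isomorphic copy of $\ell_\infty(d(N))\supseteq\ell_\infty$, together with the well-known failure of $\ell_\infty$ (hence any space containing it) to admit such a decomposition — e.g.\ $\ell_\infty$ contains the uncountable weakly discrete set $\{\mathbf{1}_A:A\subseteq\mathbb{N}\}$ with pairwise distances, or more simply one notes $\ell_\infty$ is not weakly Lindel\"of (Proposition~\ref{prop:not_weakly_lindelof}) and a countable union of weakly sequentially compact, hence weakly Lindel\"of-ish, sets would contradict this after an appropriate Lindel\"of argument.

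The cleanest route for the final contradiction, and the one I would actually write, is probably via an explicit uncountable ``spread-out'' family: inside $\lip_0(N)$ fix the copy of $\ell_\infty$ and an uncountable family $(u_i)_{i\in I}$ with $\|u_i-u_j\|$ bounded below and which is \emph{relatively weakly discrete} (the images of $\{\mathbf{1}_A\}$ under the embedding work, as in the proof that $\ell_\infty$ is not weakly Lindel\"of). Since $I$ is uncountable, some single $T[nB]$ must contain uncountably many $u_i$; but $T[nB]$ is a continuous (sequentially continuous, hence on a metrizable compact domain genuinely continuous) image of the compact metrizable space $nB$, so $T[nB]$ is separable and even sequentially compact in $\lip_0(N)_w$, which is incompatible with containing an uncountable relatively weakly discrete subset. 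The main obstacle I anticipate is making the ``sequentially continuous on $nB$ $\Rightarrow$ continuous on $nB$'' step rigorous — it is fine precisely because $nB$ is metrizable, so I must be careful to invoke the separability of $\mathcal{F}(M)$ at the outset and note that weak* metrizes $B$ — and, secondly, pinning down exactly which weak-topology smallness property ($\sigma$-compactness in the weak topology, weak Lindel\"ofness, or the existence of a countable network) is both implied by the existence of $T$ and genuinely violated by $\lip_0(N)_w$; I expect the weak-discreteness-of-$\{\mathbf 1_A\}$ argument to be the most robust and I would build the proof around it.
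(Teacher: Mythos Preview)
Your approach matches the paper's: use separability of $M$ to see that each $nB$ is compact metrizable in $\tau_p$, push this forward through a hypothetical sequentially continuous surjection $T$, and derive a contradiction from the size of $\lip_0(N)_w$. You also correctly note that sequential continuity on the metrizable domain $nB$ upgrades to genuine continuity there, so that $T[nB]$ is in fact weakly compact, not merely weakly sequentially compact.

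Where your write-up is weaker than the paper's is the final step. The paper closes the argument in one line: by the Eberlein--\v{S}mulian theorem each weakly sequentially compact $T[nB]$ is weakly compact, hence $\lip_0(N)_w$ is $\sigma$-compact and therefore Lindel\"of, contradicting Proposition~\ref{prop:not_weakly_lindelof}. This is exactly the ``appropriate Lindel\"of argument'' you gesture at but do not commit to; you should invoke Eberlein--\v{S}mulian explicitly and be done. (Alternatively, since you already observed that $T\restriction nB$ is continuous, you get weak compactness of $T[nB]$ directly without Eberlein--\v{S}mulian, and the same Lindel\"of contradiction follows.)

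Your preferred ``cleanest route'' through the family $\{\mathbf{1}_A:A\subseteq\mathbb{N}\}$ has a genuine gap: you assert this family is relatively weakly discrete in $\ell_\infty$, but you do not prove it, and it is far from obvious---in the weak* topology this set is a copy of the Cantor space, so certainly not discrete, and passing to the finer weak topology does not automatically make it discrete. Moreover, even granting weak discreteness, your stated contradiction (``separable and sequentially compact, hence cannot contain an uncountable relatively discrete subset'') is not valid in general topological spaces; you would need to use that $T[nB]$ is compact \emph{metrizable} (as a continuous Hausdorff image of a compact metric space) to rule out uncountable discrete subsets. All of this is fixable, but it is more work than the one-line Eberlein--\v{S}mulian/Lindel\"of finish that you already had within reach.
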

\begin{proof}
Suppose that $M$ is separable and there is a sequentially continuous surjection $T\colon \lip_{0}(M)_p \rightarrow \lip_{0}(N)_w$. The closed unit ball of $\lip_{0}(M)$ is compact and, by the separability of $M$ and hence of $\mathcal{F}(M)$, metrizable with respect to the pointwise topology, which implies that $\lip_{0}(M)_p$ is covered by a sequence of sequentially compact subsets. Consequently, $\lip_{0}(N)_w$  is covered by a sequence of sequentially compact subsets, say $(B_{n})_{n=0}^\infty$. By the Eberlein--\v{S}mulian theorem, each set $B_{n}$ is compact in  $\lip_{0}(N)_w$. But this implies that $\lip_{0}(N)_w$ is Lindel\"of, a contradiction with Proposition~\ref{prop:not_weakly_lindelof}.
\end{proof}

If $K$ is an infinite compact space and $N$ is an infinite metric space such that $w(K)<2^{d(N)}$, then there is no continuous surjection $T\colon C(K)\rightarrow\lip_0(N)$ (between Banach spaces), as $d(C(K))=w(K)<2^{d(N)}=d(\lip_0(N))$. The next theorem generalizes this observation.

\begin{theorem}\label{theorem:cpm_lip0n}
  Let $X$ be an infinite Tychonoff space and $N$ an infinite metric space such that $w(X)<2^{d(N)}$. Let $\tau$ be any topology on $C(X)$ contained between the pointwise topology and the compact-open topology, i.e. $\tau_p\subseteq\tau\subseteq\tau_k$, and let $\tau'$ be any topology on $\lip_0(N)$ containing the weak topology, i.e. $w\subseteq\tau'$. Then, there is no continuous surjection $T\colon(C(X),\tau)\rightarrow (\lip_0(N),\tau')$.
\end{theorem}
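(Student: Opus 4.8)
The plan is to turn the statement into a density count, exactly as in the Banach-space remark preceding it, where the obstruction is just $d(C(K))=w(K)<2^{d(N)}=d(\lip_0(N))$. Assume for contradiction that $T\colon(C(X),\tau)\to(\lip_0(N),\tau')$ is a continuous surjection. Since the image of a dense set under a continuous surjection is dense, $d(\lip_0(N),\tau')\le d(C(X),\tau)$, so it suffices to prove the two inequalities
\[d(C(X),\tau)\le w(X)\qquad\text{and}\qquad d(\lip_0(N),\tau')\ge 2^{d(N)},\]
which together contradict the hypothesis $w(X)<2^{d(N)}$.

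For the first inequality, I would first note that $\tau\subseteq\tau_k$ makes every $\tau_k$-dense subset of $C(X)$ also $\tau$-dense, so $d(C(X),\tau)\le d(C_k(X))$; it then remains to show $d(C_k(X))\le w(X)$. Set $\kappa=w(X)$ (so $\kappa\ge\aleph_0$, as $X$ is infinite). Since $X$ is Tychonoff of weight $\kappa$, there is a family $\mathcal{F}\subseteq C(X)$ with $|\mathcal{F}|\le\kappa$ separating the points of $X$ (this is part of the Tychonoff embedding theorem). Letting $\mathcal{A}$ be the set of all finite polynomials with rational coefficients in the functions from $\mathcal{F}$, the set $\mathcal{A}$ is a subalgebra of $C(X)$ containing the constant function $\textbf{1}$ and separating the points of $X$, with $|\mathcal{A}|\le\kappa$. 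By the Stone--Weierstrass theorem for the compact-open topology (\cite[Problem 44B]{Willard}, used exactly as in the proof of Theorem~\ref{theorem:l_dens_tp_tk}), $\mathcal{A}$ is dense in $C_k(X)$, and hence $d(C_k(X))\le|\mathcal{A}|\le\kappa=w(X)$.

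For the second inequality, $w\subseteq\tau'$ makes every $\tau'$-dense subset of $\lip_0(N)$ weakly dense, so $d(\lip_0(N),\tau')\ge d(\lip_0(N)_w)$. For any infinite-dimensional Banach space $E$ one has $d(E_w)=d(E)$: if $D\subseteq E$ is weakly dense, then the set of finite rational-linear combinations of members of $D$ has cardinality $|D|$, and its norm closure is a closed linear subspace, hence weakly closed by Mazur's theorem, hence equal to $E$. Applying this with $E=\lip_0(N)$ and using Corollary~\ref{To}, we obtain $d(\lip_0(N),\tau')\ge d(\lip_0(N))=2^{d(N)}$, finishing the contradiction. Note that linearity of $T$ is nowhere used, matching the ``(linear or not)'' in the statement.

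The argument is essentially careful bookkeeping with cardinal functions, so I do not expect a genuine obstacle; the only step that requires an actual proof rather than a monotonicity observation is the bound $d(C_k(X))\le w(X)$, and even that is the same Stone--Weierstrass density argument already performed in the proof of Theorem~\ref{theorem:l_dens_tp_tk}. The one point to watch is the direction of the monotonicity facts used (a coarser topology has smaller density; a continuous surjective image has smaller density; weak and norm density agree on Banach spaces), together with the use of $X$ being infinite to guarantee $w(X)\ge\aleph_0$ so that $|\mathcal{A}|\le w(X)$.
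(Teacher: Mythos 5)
Your proof is correct and follows essentially the same route as the paper: a comparison of densities, using that a continuous surjection cannot increase density, that $d(\lip_0(N),\tau')\ge d(\lip_0(N)_w)=d(\lip_0(N))=2^{d(N)}$ (Corollary~\ref{To}), and that $d(C(X),\tau)\le w(X)$. The only difference is cosmetic: where the paper cites Noble's theorem to get $d(C(X),\tau)=ww(X)\le w(X)$, you reprove the bound $d(C_k(X))\le w(X)$ directly via a point-separating family and Stone--Weierstrass, and you spell out via Mazur's theorem the standard fact that weak and norm densities of a Banach space coincide.
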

\begin{proof}
We showed in Corollary~\ref{To} that $d(\lip_0(N)_w)=d(\lip_{0}(N))=2^{d(N)}$. Consequently we have, $d(\lip_0(N),\tau')\ge2^{d(N)}$. On the other hand, we also have  $d(C(X),\tau)=ww(X)\le w(X)$ (by \cite[Theorem 1]{Nob71}, see the previous section), so $d(\lip_0(N),\tau')>d(C(X),\tau)$, hence there cannot exist any continuous mapping from $(C(X),\tau)$ onto $(\lip_0(N),\tau')$.
\end{proof}

Recall that $\lip_0([0,1])$ is isomorphic to $\ell_\infty$, so in particular to the Banach space $(C(\beta\mathbb{N}),\|\cdot\|_\infty)$. Since $w(\beta\mathbb{N})=2^{\aleph_0}=2^{d([0,1])}$, it follows that in Theorem~\ref{theorem:cpm_lip0n} the assumption that $w(X)<2^{d(N)}$ cannot be relaxed to $w(X)\le2^{d(N)}$. On the other hand, C\'{u}th, Doucha, and Wojtaszczyk \cite[proof of Theorem 4.1]{Doucha} showed that there is no continuous linear surjection $T\colon C(K)\rightarrow\lip_0([0,1]^2)$ for any compact space $K$. Since the space $[0,1]^2$ is an absolute Lipschitz retract (see Section~\ref{sec:ell_1} for the definition), the latter result implies that for any metric space $M$ containing $[0,1]^2$ there is no continuous linear surjection $T\colon C(K)\rightarrow\lip_0(M)$ for any compact space $K$ (as $\lip_0([0,1]^2)$ is complemented in $\lip_0(M)$, see Proposition~\ref{prop:Retract1}).

\subsection{Continuous mappings from Lipschitz-free spaces\label{sec:lipschitz_free}}

In this section we will briefly study the existence of continuous mappings from Lipschitz-free spaces $\mathcal{F}(M)$ (in particular, with respect to the weak topology). We start with the following proposition.

\begin{proposition}\label{prop:fm_lip0n}
Let $M$ be an infinite metric space. Let $E$ be a Banach space and let $\tau$ be any topology on $E$ contained between the weak topology and the norm topology. The following conditions are equivalent:
\begin{enumerate}
	\item there is a continuous linear surjection $T\colon\mathcal{F}(M)\rightarrow E$,
	\item there is a continuous surjection $T\colon \mathcal{F}(M)\rightarrow E$,
	\item there is a continuous surjection $T\colon \mathcal{F}(M)\rightarrow (E,\tau)$,
	\item $d(M)\ge d(E)$,
	\item $\mathcal{F}(M)$ contains a complemented subspace homeomorphic to $E$,
	\item $\mathcal{F}(M)$ contains a subset homeomorphic to $E$.
\end{enumerate}
\end{proposition}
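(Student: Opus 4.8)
The plan is to close the two cyclic chains
\[(1)\Rightarrow(2)\Rightarrow(3)\Rightarrow(4)\Rightarrow(1)\qquad\text{and}\qquad(4)\Rightarrow(5)\Rightarrow(6)\Rightarrow(4),\]
which together give the equivalence of all six conditions. The implications $(1)\Rightarrow(2)$ and $(5)\Rightarrow(6)$ are trivial (a linear surjection is a surjection; a complemented subspace is a subset), and $(2)\Rightarrow(3)$ follows by composing the given surjection with the identity map $(E,\|\cdot\|)\to(E,\tau)$, which is continuous because $\tau$ is coarser than the norm topology. So the content is concentrated in $(3)\Rightarrow(4)$, $(4)\Rightarrow(1)$, $(4)\Rightarrow(5)$, and $(6)\Rightarrow(4)$.

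For $(3)\Rightarrow(4)$ I would argue via densities. A continuous surjection $\mathcal{F}(M)\to(E,\tau)$ forces $d(E,\tau)\le d(\mathcal{F}(M))=d(M)$ (continuous images do not increase density, and $d(\mathcal{F}(M))=d(M)$ by Remark~\ref{rem_fm_homeo}). Since $w\subseteq\tau$, the identity $(E,\tau)\to E_w$ is continuous, so $d(E_w)\le d(E,\tau)$, and it remains to note that $d(E_w)=d(E)$: if $D$ is weakly dense in $E$, then by Mazur's theorem $E=\overline{\operatorname{conv}D}^{\,w}=\overline{\operatorname{conv}D}^{\,\|\cdot\|}$, a norm-closed convex hull of density at most $|D|$, so $d(E)\le d(E_w)$, the reverse inequality being obvious. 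Combining the estimates gives $d(E)\le d(M)$, i.e. $(4)$.

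The linear core is $(4)\Rightarrow(1)$, whose engine is the Hájek--Novotný theorem in the form of Corollary~\ref{hano_ell1}: $\mathcal{F}(M)$ contains a complemented copy of $\ell_1(d(M))$, hence there is a continuous linear surjection (a projection) $\mathcal{F}(M)\to\ell_1(d(M))$. As $d(E)\le d(M)$, one composes this with the coordinate projection $\ell_1(d(M))\to\ell_1(d(E))$ and with the canonical quotient map $\ell_1(d(E))\to E$ built from a dense subset of $B_E$ of cardinality $d(E)$; the composition is the desired continuous linear surjection $\mathcal{F}(M)\to E$.

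For $(4)\Rightarrow(5)$ I would again invoke the complemented $\ell_1(d(M))\subseteq\mathcal{F}(M)$, now together with Toruńczyk's theorem~\cite{Tor}. If $E$ is finite-dimensional the statement is clear, since $E$ is homeomorphic to a (linearly complemented) finite-dimensional subspace of $\ell_1(d(M))$. If $E$ is infinite-dimensional, then from $d(E)\le d(M)$ the space $\ell_1(d(E))$ is $1$-complemented in $\ell_1(d(M))$, hence complemented in $\mathcal{F}(M)$, and being an infinite-dimensional Banach space with $d(\ell_1(d(E)))=d(E)$ it is homeomorphic to $E$ by Toruńczyk's theorem. Finally $(6)\Rightarrow(4)$ is immediate: density is hereditary in metric spaces, so a subset of $\mathcal{F}(M)$ homeomorphic to $E$ has density at most $d(\mathcal{F}(M))=d(M)$, that is $d(E)\le d(M)$. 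I do not expect a genuine obstacle here; the whole proof is an assembly of the density identity $d(\mathcal{F}(M))=d(M)$, the complemented $\ell_1(d(M))$ of Corollary~\ref{hano_ell1}, and Toruńczyk's theorem, with the only slightly technical point being the auxiliary equality $d(E_w)=d(E)$ used in $(3)\Rightarrow(4)$.
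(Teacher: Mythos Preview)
Your proof is correct and follows essentially the same route as the paper: the same implication chains, the same use of Corollary~\ref{hano_ell1} for the complemented $\ell_1(d(M))$, Toru\'nczyk's theorem for $(4)\Rightarrow(5)$, and the density argument for $(3)\Rightarrow(4)$ and $(6)\Rightarrow(4)$. You actually supply a little more detail than the paper does---the Mazur justification for $d(E_w)=d(E)$ and the separate treatment of finite-dimensional $E$ in $(4)\Rightarrow(5)$---but the architecture is identical.
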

\begin{proof}
(1)$\Rightarrow$(2) Obvious.

(2)$\Rightarrow$(3) Consider the identity mapping $(E,\|\cdot\|)\rightarrow(E,\tau)$.

(3)$\Rightarrow$(4) Continuous surjections do not increase densities, so we have $d(M)=d(\mathcal{F}(M))\ge d(E,\tau)\ge d(E,w)=d(E)$.

(4)$\Rightarrow$(5) By Corollary~\ref{hano_ell1}, the space $\mathcal{F}(M)$ contains a complemented copy of $\ell_1(d(M))$, hence if $d(M)\ge d(E)$, then $\ell_1(d(E))$ is also isomorphic to a  complemented subspace $Y$ of $\mathcal{F}(M)$. Since $d(Y)=d(\ell_1(d(E)))=d(E)$, the Toru\'nczyk theorem implies that $Y$ is homeomorphic to $E$.

(5)$\Rightarrow$(6) Obvious.

(6)$\Rightarrow$(4) Let $Y$ be a subset of $\mathcal{F}(M)$ homeomorphic to $E$. Since $\mathcal{F}(M)$ is a metric space, $d(E)=d(Y)\le d(\mathcal{F}(M))=d(M)$.

(4)$\Rightarrow$(1) As in (4)$\Rightarrow$(5), note that $\mathcal{F}(M)$ contains a complemented subspace isomorphic to $\ell_1(d(E))$. Since $d(\ell_1(d(E)))=d(E)$, there is a continuous linear surjection $S\colon\ell_1(d(E))\rightarrow E$, so in particular there is a linear continuous surjection $T\colon\mathcal{F}(M)\to E$.
\end{proof}

We will need the following auxiliary notions. A Tychonoff space $X$ is called \emph{Ascoli} if every compact subset $\mathcal{K}$ of $C_k(X)$ is evenly continuous, that is, the map $X\times\mathcal{K}\ni(x,f) \mapsto f(x)\in\mathbb{R}$ is continuous (see \cite{BGAscoli} and \cite{SGKZ} for details). A family $\mathcal{D}$ of subsets of a topological space $X$ is called a \emph{$k$-network in $X$} if for every subsets $K\subseteq U\subseteq X$ with $K$ compact and $U$ open in $X$ we have $K\subseteq \bigcup\mathcal{F}\subseteq U$ for some finite family $\mathcal{F}\subseteq\mathcal{D}$. A topological space $X$ is said to be an \emph{$\aleph_0$-space} if $X$ is regular and has a countable $k$-network. By~\cite{Mich}, every metrizable separable Tychonoff space is an $\aleph_0$-space.

The next proposition provides a range of examples of metric spaces whose Lipschitz-free Banach spaces are not homeomorphic with respect to the weak topologies.

\begin{proposition}\label{ex}
  Let $M=C(K)$ for an uncountable compact metric space $K$. Let $N$ be a separable infinite ultrametric space (e.g. $N$ is the Cantor space $2^\mathbb{N}$). Then the Lipschitz-free Banach spaces $\mathcal{F}(M)$ and $\mathcal{F}(N)$ are homeomorphic but the spaces $\mathcal{F}(M)_w$ and $\mathcal{F}(N)_w$ are not homeomorphic.
\end{proposition}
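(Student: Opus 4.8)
The plan is to handle the two halves separately, the first being an immediate density computation and the second an argument by contradiction that routes everything through \emph{sequential} homeomorphisms and applies Theorem~\ref{lee}. For the first half: since $K$ is a compact metric space it is separable, so $C(K)$ is a separable Banach space and $d(M)=d(C(K))=\aleph_0$; as $N$ is separable we also have $d(N)=\aleph_0$. Thus $d(M)=d(N)$, and by Remark~\ref{rem_fm_homeo} (i.e. $d(\mathcal{F}(X))=d(X)$ together with the Toru\'nczyk theorem) the Banach spaces $\mathcal{F}(M)$ and $\mathcal{F}(N)$ are homeomorphic.

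For the second half I would argue by contradiction. Note first that any homeomorphism is in particular a sequential homeomorphism, and that sequential homeomorphisms compose. By the first half there is a (sequential) homeomorphism $\mathcal{F}(M)\to\mathcal{F}(N)$. Next I would check (see the last paragraph) that $\mathcal{F}(N)$ has the Schur property, so by Theorem~\ref{lee} the spaces $\mathcal{F}(N)$ and $\mathcal{F}(N)_w$ are sequentially homeomorphic. Finally, if $\mathcal{F}(M)_w$ and $\mathcal{F}(N)_w$ were homeomorphic, they would be sequentially homeomorphic, and composing
\[\mathcal{F}(M)\cong_{\mathrm{seq}}\mathcal{F}(N)\cong_{\mathrm{seq}}\mathcal{F}(N)_w\cong_{\mathrm{seq}}\mathcal{F}(M)_w\]
would give that $\mathcal{F}(M)$ and $\mathcal{F}(M)_w$ are sequentially homeomorphic; by Theorem~\ref{lee} this forces $\mathcal{F}(M)$ to have the Schur property. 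But $M=C(K)$ is an infinite-dimensional (complete) normed space, hence contains isometric copies of $[0,1]$ (any line segment is one), i.e. a bilipschitz copy of a subset of $\mathbb{R}$ of positive Lebesgue measure; by the characterization of the Schur property for Lipschitz-free spaces of Aliaga \textit{et al.} (\cite{AGPP,Aliaga}, recalled in the introduction), $\mathcal{F}(M)=\mathcal{F}(\tilde M)$ does \emph{not} have the Schur property. This contradiction completes the argument.

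It remains to justify that $\mathcal{F}(N)$ has the Schur property, and this is the only step requiring a genuine (if short) argument. By the same characterization \cite{AGPP,Aliaga}, it suffices to show that the completion $\tilde N$ --- which is again an ultrametric space --- is purely $1$-unrectifiable, i.e. contains no bilipschitz copy of a subset of $\mathbb{R}$ of positive Lebesgue measure. Suppose $A\subseteq\mathbb{R}$ has positive measure and $g\colon A\to\tilde N$ is $L$-bilipschitz. At a Lebesgue density point $x_0$ of $A$ one can, for every $n$, find a short interval of length $\delta$ around $x_0$ whose $n$ consecutive subintervals of length $\delta/n$ each meet $A$; choosing $a_j$ in $A$ in the $j$-th subinterval gives $a_1<\dots<a_n$ with $a_{j+1}-a_j<2\delta/n$ and $a_n-a_1>(n-2)\delta/n$. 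The strong triangle inequality applied along the chain $g(a_1),\dots,g(a_n)$ yields $\rho(g(a_1),g(a_n))\le\max_j\rho(g(a_j),g(a_{j+1}))\le 2L\delta/n$, while bilipschitzness gives $\rho(g(a_1),g(a_n))\ge(a_n-a_1)/L>(n-2)\delta/(nL)$; hence $n-2\le 2L^2$, impossible for $n$ large. Thus $\tilde N$ is purely $1$-unrectifiable and $\mathcal{F}(N)$ has the Schur property, as required. (The hypothesis that $K$ is uncountable is not actually used here: all that matters is that $C(K)$ is an infinite-dimensional separable Banach space, hence simultaneously separable and not purely $1$-unrectifiable.)
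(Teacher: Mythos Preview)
Your proof is correct but follows a genuinely different route from the paper's. The paper argues topologically: it uses the Godefroy--Kalton lifting to embed $C(K)$ linearly (and hence weakly) into $\mathcal{F}(M)$, invokes the C\'uth--Doucha isomorphism $\mathcal{F}(N)\simeq\ell_1$, and then derives a contradiction from the fact that $(\ell_1)_w$ is an $\aleph_0$-space while $C(K)_w$ is not (Michael's result, which is where the uncountability of $K$ enters). Your argument instead reduces everything to the Schur property via Theorem~\ref{lee}: you show $\mathcal{F}(N)$ has Schur (by a neat direct proof that ultrametric spaces are purely $1$-unrectifiable, where the paper would simply quote $\mathcal{F}(N)\simeq\ell_1$) and $\mathcal{F}(M)$ does not, then chain sequential homeomorphisms to get a contradiction. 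Your approach has two advantages: it is more internal to the paper (it showcases Theorem~\ref{lee} rather than importing the $\aleph_0$-space machinery), and, as you observe, it does not use the uncountability of $K$ at all---only that $C(K)$ is infinite-dimensional---so it actually proves a slightly stronger statement. The only caveat is structural: Theorem~\ref{lee} is stated and proved \emph{after} Proposition~\ref{ex} in the paper, so using it here is a forward reference; there is no logical circularity, but you would need to reorder or at least flag the dependency.
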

\begin{proof}
Both spaces $M$ and $N$ have the same density (that is, both are separable), so by Remark~\ref{rem_fm_homeo} and the Toru\'nczyk theorem the spaces $\mathcal{F}(M)$ and $\mathcal{F}(N)$ are homeomorphic. 

Since $M$ is a separable Banach space, \cite[Theorems~2.12 and~3.1]{GK} imply that $M$ embeds linearly into $\mathcal{F}(M)$ (as a closed subspace), so  $\mathcal{F}(M)_w$ contains a linear copy of $C(K)_{w}$. Assume that $\mathcal{F}(M)_w$ and $\mathcal{F}(N)_w$ are homeomorphic. Recall that $\mathcal{F}(N)$ is isomorphic to the space $\ell_{1}$ (by
\cite[Theorem~2]{C-D}). By \cite[Theorem 1.2]{GKKM} the space $(\ell_{1})_{w}$ is an $\aleph_{0}$-space. Consequently, $\mathcal{F}(M)_w$ is an $\aleph_{0}$-space, too.  Since the property of being an $\aleph_{0}$-space is inherited by subspaces, the space $C(K)_w$ is also an $\aleph_{0}$-space, which is a contradiction to~\cite[Proposition 10.8]{Mich}.
\end{proof}

The next theorem is motivated by recent research related to the open question asking whether there exist two infinite compact spaces $K$ and $L$ such that $C(K)_w$ and $C_p(L)$ are homeomorphic; see \cite{Krupski-1} and \cite{K-M} for details and references. Recall that a Banach space $E$ has the \emph{Schur property} if every weakly convergent sequence in $E$ is norm convergent. For several characterizations of Lipschitz-free Banach spaces with the Schur property, see \cite{AGPP}.

\begin{theorem}\label{theorem_cpm_homeo_fmw}
Let $M$ and $N$ be infinite metric spaces such that at least one of the following conditions holds:
\begin{enumerate}
	\item at least one of the spaces $M$ and $N$ is not separable, or
	\item $M$ is countable, or
	\item $M$ is scattered, or
	\item $M$ is not $\sigma$-compact, or
	\item $\mathcal{F}(N)$ has the Schur property, or
	\item $\mathcal{F}(N)$ isomorphically embeds into some separable space $L_1(\Omega,\Sigma,\mu)$.
\end{enumerate}Then, the spaces $C_p(M)$ and $\mathcal{F}(N)_w$ are not homeomorphic.
\end{theorem}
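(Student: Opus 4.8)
The plan is to argue by contradiction: suppose $h\colon C_p(M)\to\mathcal{F}(N)_w$ is a homeomorphism, and derive that none of (1)--(6) can hold. For each condition I would exhibit a topological invariant, preserved by $h$, that separates the two sides under that hypothesis. A convenient order is to treat (2), then (1), then (3) (reducing it to the first two), then (4), and finally (5)--(6). Throughout I would use three standard facts: $C_p(X)$ is metrizable precisely when $X$ is countable; Arhangelskii's formula $nw(C_p(X))=nw(X)$ for the network weight; and that, for separable $N$, the ball $(B_{\lip_0(N)},w^*)$ is compact metrizable, so $\mathcal{F}(N)_w$ embeds homeomorphically into $C_p\big((B_{\lip_0(N)},w^*)\big)$ and is therefore cosmic, in fact analytic (being the continuous image of the Polish space $(\mathcal{F}(N),\|\cdot\|)$ under the identity map). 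In particular $nw(\mathcal{F}(N)_w)=\aleph_0$ if and only if $N$ is separable, so $h$ forces $M$ to be separable exactly when $N$ is.

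\emph{Conditions (2) and (3).} If $M$ is countable then $C_p(M)$ is a subspace of $\mathbb{R}^{\omega}$, hence metrizable, whereas $\mathcal{F}(N)_w$ is never metrizable since $\mathcal{F}(N)$ is infinite-dimensional; so no $h$ exists. If $M$ is scattered then iterating the Cantor--Bendixson derivation shows $M$ is countable whenever it is separable (each level is a relatively open discrete, hence countable, subset, and the process stabilises at a countable ordinal); so a scattered $M$ is either countable (case (2)) or non-separable (case (1)).

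\emph{Conditions (1) and (4).} By the first paragraph the instance of (1) in which exactly one of $M,N$ is non-separable is already impossible. Since a non-separable metric space is never $\sigma$-compact, it now suffices to treat (4), so assume $M$ is not $\sigma$-compact. If $M$ is non-separable then, $\mathcal{F}(N)_w$ being a homeomorphic image of $C_p(M)$, it is not cosmic, hence $N$ is non-separable; I would then derive a contradiction from Corollary~\ref{hano_ell1}, which gives a \emph{complemented} copy of $\ell_1(d(N))$ in $\mathcal{F}(N)$ with $d(N)$ uncountable, so that $\mathcal{F}(N)_w$ has $\ell_1(d(N))_w$ as a topological factor; the latter carries an uncountable closed discrete biorthogonal subset, incompatible with the fact that $C_p(M)$ has countable cellularity for every $M$. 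If instead $M$ is separable but not $\sigma$-compact, then by the classical theorem of Christensen (for metrizable $X$, $C_p(X)$ is analytic if and only if $X$ is $\sigma$-compact) the space $C_p(M)$ is not analytic; but $h$ forces $N$ separable, and then $\mathcal{F}(N)_w$ is analytic, a contradiction.

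\emph{Conditions (5) and (6), and the main obstacle.} In both cases $\mathcal{F}(N)$ is weakly sequentially complete: a Schur space is, and so is every subspace of an $L_1(\mu)$-space. If $N$ is non-separable we are back in case (1); if $N$ is separable I would show that $\mathcal{F}(N)_w$ is an $\aleph_0$-space --- for (5) because in a separable Schur space the weakly compact sets coincide with the norm-compact ones, so that a countable $k$-network for the norm topology is also one for the weak topology; for (6) because $(L_1(\mu))_w$ is an $\aleph_0$-space for separable $\mu$ (see \cite{GKKM}) and the $\aleph_0$-property passes to the subspace $\mathcal{F}(N)_w$. Since $C_p(M)$ is an $\aleph_0$-space only when $M$ is countable, $M$ would be countable and (2) applies; alternatively, for (5) one can use Theorem~\ref{lee}, which gives that $\mathcal{F}(N)_w$ is sequentially homeomorphic to the metric space $\mathcal{F}(N)$. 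The routine part of all this is collecting and applying the invariants (metrizability, network weight, analyticity, the $\aleph_0$-property); the delicate points, where I expect the real work to lie, are pinning down the invariant that rules out the \emph{both}-non-separable instance of (1) --- resting on the complemented $\ell_1(d(N))$ together with a chain-condition or extent argument --- and the precise form of the $\aleph_0$-space result of \cite{GKKM} used in (6). Everything else is bookkeeping about which hypothesis forces $N$ (equivalently $M$) to be separable.
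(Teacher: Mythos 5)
Your overall architecture (reduce everything via the invariant ``$M$ separable iff $N$ separable'', handle the separable branch by metrizability/analyticity/$\aleph_0$-space arguments) is reasonable, and some pieces genuinely simplify the paper's proof: the reduction of (3) to ``countable or non-separable'' (a scattered separable metric space is countable) replaces the paper's Ascoli-property argument, and your direct observation that in a separable Schur space a countable norm $k$-network is a weak $k$-network is a neat substitute for the citation the paper uses in case (5). The separable branches of (2), (4), (5), (6) essentially coincide with the paper's proof (the paper cites Calbrix rather than Christensen for ``$C_p(M)$ analytic $\Rightarrow M$ $\sigma$-compact'', and Schl\"uchtermann--Wheeler rather than \cite{GKKM} for the $\aleph_0$-space fact in (6), but these are citation details).

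The genuine gap is the case you yourself flag as the delicate one: both $M$ and $N$ non-separable, through which you route the remaining instances of (1), (3), (4), (5), (6). Your proposed contradiction --- a complemented copy of $\ell_1(d(N))$ gives an uncountable weakly closed discrete set $\{e_\gamma\}$ in $\mathcal{F}(N)_w$, which you declare ``incompatible with the fact that $C_p(M)$ has countable cellularity'' --- does not work: an uncountable closed discrete subset is an extent/spread phenomenon, not a cellularity one, and countable cellularity does not exclude it (the Sorgenfrey plane is ccc with an uncountable closed discrete subspace). Worse, cellularity can never separate these two spaces: $C_p(M)$ is ccc as a dense subspace of $\mathbb{R}^M$, and $\mathcal{F}(N)_w$ is ccc as well (it is a dense linear subspace of a closed linear subspace of a product of lines), so no chain-condition argument can finish this case; and replacing cellularity by extent would require knowing that $C_p(M)$ has countable extent for \emph{non-separable} metric $M$, which is not available (for non-separable $M$ the space $C_p(M)$ has uncountable network weight and is not hereditarily Lindel\"of, and no such extent bound is proved or cited). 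The invariant that actually does the job here is \emph{tightness}: since a non-separable metric $M$ is not Lindel\"of, Pytkeev's theorem gives that $C_p(M)$ has uncountable tightness, while by Kaplansky's theorem the weak topology of any Banach space, in particular $\mathcal{F}(N)_w$, has countable tightness --- this is how the paper settles the both-non-separable case, and without some substitute for it your treatment of (1), and hence of the non-separable branches of (3)--(6), is incomplete.
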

\begin{proof}
Suppose that (1) holds. Assume first that $d(M)>\aleph_0$ and $d(N)>\aleph_0$. Then, $M$ is not Lindel\"{o}f and hence by the Pytkeev theorem the tightness of $C_p(M)$ is uncountable (see \cite[Exercise 149]{Tka1}). On the other hand, by the Kaplansky theorem (see \cite[Theorem 3.54]{Czech10}), $\mathcal{F}(M)_w$ has countable tightness.

If $d(M)=\aleph_0<d(N)$, then $d(C_p(M))=\aleph_0<d(\mathcal{F}(N))=d(\mathcal{F}(N)_w)$.

Finally, assume that $d(M)>\aleph_0=d(N)$. Since $M$ is not separable, it is not second countable and hence $C_p(M)$ is not Lindel\"{o}f (see \cite[Exercise 215]{Tka1}). Since $d(\mathcal{F}(N))=d(N)=\aleph_0$, $\mathcal{F}(N)$ is separable and hence $\mathcal{F}(N)_w$ is Lindel\"{o}f.

It follows that in all of the above three cases $C_p(M)$ and $\mathcal{F}(M)_w$ are not homeomorphic.

\medskip

If $M$ is countable, so (2) holds, then $C_p(M)$ is metrizable (being a subset of $\mathbb{R}^M$). Since no infinite-dimensional Banach space is metrizable in the weak topology, $C_p(M)$ and $\mathcal{F}(N)_w$ cannot be homeomorphic.

\medskip

If (3) holds, that is, $M$ is scattered, then by \cite[Theorem 1.3]{SGKZ} the space $C_p(M)$ is Ascoli. Suppose that $C_p(M)$ and $\mathcal{F}(N)_w$ are homeomorphic. Since the Ascoli property is inherited by continuous open maps (see \cite[Proposition 3.3]{SKP}), the space $\mathcal{F}(N)_w$ is also an Ascoli space. On the other hand, if the weak topology of a normed space $E$ is Ascoli, then $E$ is finite-dimensional (see \cite[Proposition 3.2]{SKP}), which is impossible in the case of $\mathcal{F}(N)$ for infinite $N$.

\medskip

Assume that $M$ is not $\sigma$-compact, that is, (4) holds. By (1) we can assume that $\mathcal{F}(N)$ is separable, so, in particular, that it is a Polish space. Consequently, $\mathcal{F}(N)$ is analytic and hence $\mathcal{F}(N)_w$ is also analytic (as the identity is norm--weakly continuous). Were $C_p(M)$ and $\mathcal{F}(N)_w$ homeomorphic, $C_p(M)$ would be an analytic space as well. But, by \cite[Theorem~2.1.3]{Calbrix}, it would mean that $M$ is $\sigma$-compact, a contradiction.

\medskip

Finally, assume that (5) or (6) holds. By (1) and (2) we only need to check the case when $|M|>\aleph_0$ and $d(\mathcal{F}(N))=\aleph_0$. But then, by \cite[Proposition 4.4]{SW91}, $\mathcal{F}(N)_w$ is an $\aleph_0$-space, whereas, 
since $M$ is uncountable, $C_p(M)$ is not an $\aleph_0$-space, by \cite[Proposition 10.7]{Mich}. It follows that $C_p(M)$ and $\mathcal{F}(N)_w$ cannot be homeomorphic.
\end{proof}

We deal with Lipschitz-free Banach spaces having the Schur property in the next subsection. Note that, by \cite[Theorem C]{AGPP}, for no non-trivial Banach space $N$ the space $\mathcal{F}(N)$ can have the Schur property as in this case $N$ contains bilipschitz copies of subsets of the real line $\mathbb{R}$ of non-zero Lebesgue measure (e.g. of $\mathbb{R}$ itself). 

\subsection{The Schur property and sequential homeomorphisms\label{sec:schur}}

In this subsection we will study the Schur property of Lipschitz-free Banach spaces $\mathcal{F}(M)$ in the context of the existence of sequentially continuous mappings. Note that the Schur property is inherited by closed subspaces, therefore no Banach space of the form $\lip_0(M)$ for infinite $M$ has it (as $\lip_0(M)$ contains a copy of $\ell_\infty$).

Recall that two topological spaces $X$ and $Y$  are \emph{sequentially homeomorphic} if there exists a sequentially continuous bijection $f\colon X\rightarrow Y$ such that the inverse $f^{-1}$ is also sequentially continuous; such $f$ is then said to be a \emph{sequential homeomorphism}. It is well-known that no infinite-dimensional Banach space $E$ is homeomorphic to $E_w$ as the latter is non-metrizable. It may however happen that $E$ and $E_w$ are sequentially homeomorphic and this happens precisely when $E$ has the Schur property.

\begin{theorem}\label{lee}
Let $E$ be a Banach space. Then, $E$ has the Schur property if and only if $E$ and $E_w$ are sequentially homeomorphic.
\end{theorem}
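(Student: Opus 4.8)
The forward implication I would dispatch immediately. If $E$ has the Schur property, then the identity map, regarded as a map from $E$ with the norm topology onto $E_w$, is a sequential homeomorphism: it is continuous (hence sequentially continuous) from the norm topology to the weak topology since the former is finer, and its inverse $E_w\to E$ is sequentially continuous precisely because, by the Schur property, every weakly convergent sequence in $E$ converges to the same point in norm.

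For the converse I would argue by contradiction. Suppose $f\colon E\to E_w$ is a sequential homeomorphism, so that $f$ and $g:=f^{-1}$ are both bijective and sequentially continuous; the crucial feature here is that $g\colon E_w\to E$ carries weakly convergent sequences to norm convergent sequences (with matching limits), while $f\colon E\to E_w$ carries norm convergent sequences to weakly convergent ones. Assume moreover that $E$ fails the Schur property. Then there is a weakly null sequence in $E$ which is not norm null, and after normalizing and passing to a subsequence one obtains a sequence $(s_n)_n$ in the unit sphere $S_E$ with $s_n\to 0$ weakly (indeed, if $(x_n)_n$ is weakly null with $\|x_n\|\ge\delta>0$, then $s_n:=x_n/\|x_n\|$ still tends to $0$ weakly, since $|\varphi(s_n)|\le|\varphi(x_n)|/\delta\to 0$ for every $\varphi\in E^*$). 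The plan is then a diagonal ``blow‑up'' argument. For each fixed $m\in\mathbb{N}$ the sequence $(m s_n)_n$ converges weakly to $0$, so by sequential continuity of $g$ we have $g(m s_n)\to g(0)$ in norm as $n\to\infty$; pick $n(m)$ with $\|g(m s_{n(m)})-g(0)\|<1/m$. Then $\big(g(m s_{n(m)})\big)_m$ converges to $g(0)$ in the norm topology, whence by sequential continuity of $f$ the sequence $\big(f(g(m s_{n(m)}))\big)_m=\big(m s_{n(m)}\big)_m$ converges to $f(g(0))=0$ in $E_w$. But $\|m s_{n(m)}\|=m\to\infty$, contradicting the fact (via the Banach--Steinhaus theorem) that every weakly convergent sequence in a Banach space is norm bounded. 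Hence $E$ has the Schur property.

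I do not expect a serious obstacle here: the argument is short and uses only the two standard facts recalled above (failure of the Schur property yields a normalized weakly null sequence, and weakly convergent sequences are bounded) together with the bare definition of a sequential homeomorphism. The one point deserving care is that $f$ is not assumed linear, so the proof must go through for an arbitrary, possibly nonlinear, sequential homeomorphism; this is fine, since the whole mechanism rests only on $g$ sending weak‑null sequences to norm‑null ones, on $f$ sending norm‑convergent sequences to weakly convergent ones, and on the diagonal choice of the indices $n(m)$. One could alternatively phrase the converse by first observing that a sequential homeomorphism $E\to E_w$ is the same thing as a genuine homeomorphism from $(E,\|\cdot\|)$ onto $E$ equipped with the sequential modification of the weak topology, and then noting—again by the classical doubly‑indexed argument—that this modified space is not metrizable whenever $E$ is infinite‑dimensional without the Schur property; but the direct argument above seems the cleanest.
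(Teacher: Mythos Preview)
Your proof is correct, and it takes a genuinely different route from the paper's. The paper argues via the \emph{sequential coreflection} $\gamma$ of the weak topology (the topology whose open sets are exactly the weakly sequentially open ones): it observes that any sequential homeomorphism $E\to E_w$ upgrades to a true homeomorphism $(E,\|\cdot\|)\to(E,\gamma)$, so $(E,\gamma)$ inherits the Baire property; then, since $E=\bigcup_m mB_E$ with each $mB_E$ $\gamma$-closed, some $mB_E$ has nonempty $\gamma$-interior, say containing a $\gamma$-neighbourhood $U_y$ of $y$. Rescaling a non-norm-null weakly null sequence to have norm $2m$ and translating by $y$ produces a sequence $\gamma$-converging to $y$ yet lying outside $mB_E$, a contradiction.

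Your diagonal blow-up is more direct and more elementary: it needs neither the auxiliary topology $\gamma$ nor the Baire category step, only the uniform boundedness principle and the definition of sequential continuity. What the paper's approach buys is a structural picture (the Baire/interior obstruction in $(E,\gamma)$), which connects to the alternative you sketch in your final paragraph; amusingly, that alternative \emph{is} essentially the paper's proof. Your main argument, by contrast, is the cleanest route to the bare statement.
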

\begin{proof}
Let $T\colon E\rightarrow E_w$ be a sequential homeomorphism and assume that the Banach space  $E$  fails to have the Schur property. Recall that we write $w=\sigma(E,E^*)$ and let $\tau$ denote the original norm topology on $E$.

Let $\gamma$ be the family of all weakly sequentially open subsets of $E$, that is, for a subset $U$ of $E$ we have: $U\in\gamma$ if and only if, for each sequence $(x_n)_{n=0}^\infty$ in $E$ weakly convergent to some $x\in U$, for almost all $n\in\mathbb{N}$ we have $x_{n}\in U$. Then, it is easy to see that $\gamma$ is a topology on $E$ such that $w\subseteq\gamma\subseteq\tau$. Of course, as also easily seen, both topologies $w$ and $\gamma$ have the same convergent sequences. Therefore, $T\colon(E,\tau)\rightarrow (E,\gamma)$ is also a homeomorphism. Indeed, if $U\subseteq E$ is $\tau$-open, then $U$ is $\tau$-sequentially open and so $T[U]$ is weakly sequentially open, so $\gamma$-open. Conversely, if $T[U]$ is $\gamma$-open, then it is weakly sequentially open. Thus, $U$ is $\tau$-sequentially open and hence $\tau$-open, as $\tau$ is metrizable.

It follows that $(E,\gamma)$ is a Baire space. Let $B_{E}$ be the closed unit ball of (the Banach space) $E$. Then, for every $m\in\mathbb{N}$, $mB_E$ is closed in $w$ and so in $\gamma$. By the Baire property of $\gamma$, there exist $m\in\mathbb{N}$, $m>0$, a point $y\in mB_E$, and an open neighbourhood $U_y$ of $y$ in the topology  $\gamma$ such that $U_y\subseteq mB_E$. Note that $\|y\|<m$.

Since $E$ does not have the Schur property, there exist $\varepsilon>0$ and a sequence $(y_{n})_{n=0}^\infty$ such that $y_{n}\rightarrow 0$ weakly but $\|y_{n}\|\ge\varepsilon$ for each $n\in\mathbb{N}$. For every $n\in\mathbb{N}$ let
\[z_n=\frac{2m}{\|y_n\|}y_n,\]
so $\|z_n\|=2m$ and
\[\|z_n+y\|\ge\|z_n\|-\|y\|>2m-m=m,\]
hence $z_n+y\not\in mB_E$. On the other hand, for every $z^*\in E^*$ we have
\[|z^*(z_n)|=\frac{2m}{\|y_n\|}|z^*(y_n)|\le\frac{2m}{\varepsilon}|z^*(y_n)|\to0,\]
so $z_n\to0$ weakly.

It follows that $z_n+y\to y$ weakly and hence $z_{n}+y\rightarrow y$ in $\gamma$, as both topologies $w$ and $\gamma$ have the same convergent sequences. This provides a contradiction since for almost all $n\in\mathbb{N}$ we have  $z_{n}+y\in U_y\subseteq mB_E$, whereas $z_{n}+y\not\in mB_E$ for all $n\in\mathbb{N}$.

The other implication is trivial as in this case the identity mapping is a sequential homeomorphism.
\end{proof}

It is well-known that the space $\ell_1$ has the Schur property whereas the space $L_1([0,1])$ does not. Thus, for these two Banach spaces, important from the point of view of the study of the Lipschitz-free spaces, we have the following corollary.

\begin{corollary}
The spaces $\ell_1$ and $(\ell_1)_w$ are sequentially homeomorphic whereas the spaces $L_1([0,1])$ and $L_1([0,1])_w$ are not.
\end{corollary}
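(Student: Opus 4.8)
The plan is to deduce the whole statement directly from Theorem~\ref{lee}, which asserts that a Banach space $E$ has the Schur property if and only if $E$ and $E_w$ are sequentially homeomorphic. Thus the only thing left to supply is the (classical) fact that $\ell_1$ has the Schur property while $L_1([0,1])$ does not; the corollary is then exactly the two corresponding instances of Theorem~\ref{lee}.

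First I would recall why $\ell_1$ has the Schur property. If $(x_n)_{n=0}^\infty$ is a weakly null sequence in $\ell_1$ that fails to converge in norm, then after passing to a subsequence one may assume $\|x_n\|_1\ge\varepsilon$ for some $\varepsilon>0$ and all $n$; a standard gliding-hump argument (using that weak convergence in $\ell_1$ forces coordinatewise convergence to $0$) produces a further subsequence equivalent to the unit vector basis of $\ell_1$ together with a sign vector $\sigma\in\ell_\infty=\ell_1^*$ with $\langle\sigma,x_n\rangle$ bounded away from $0$, contradicting weak nullity. (One may equally well just cite this as Schur's classical theorem.) Hence by Theorem~\ref{lee} the spaces $\ell_1$ and $(\ell_1)_w$ are sequentially homeomorphic, which is the first half of the statement.

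For the second half I would exhibit an explicit witness that $L_1([0,1])$ fails the Schur property: the Rademacher functions $(r_n)_{n=1}^\infty$ (or any orthonormal sequence uniformly bounded in $L_\infty$) satisfy $\|r_n\|_1=1$ for every $n$, while for each $g\in L_\infty([0,1])=L_1([0,1])^*$ the Riemann--Lebesgue lemma gives $\int_0^1 r_n\,g\,dt\to 0$; thus $r_n\to 0$ weakly but not in norm, so $L_1([0,1])$ does not have the Schur property. Applying Theorem~\ref{lee} once more yields that $L_1([0,1])$ and $L_1([0,1])_w$ are not sequentially homeomorphic, completing the proof. I do not expect any genuine obstacle here: the entire content sits in Theorem~\ref{lee}, and this corollary merely records its two most relevant consequences for the study of Lipschitz-free spaces.
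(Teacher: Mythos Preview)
Your proposal is correct and follows exactly the paper's approach: the paper simply records that $\ell_1$ has the Schur property while $L_1([0,1])$ does not (as ``well-known'') and reads off the corollary from Theorem~\ref{lee}. You supply more detail than the paper by sketching the gliding-hump argument and the Rademacher witness, which is fine (though calling the latter ``Riemann--Lebesgue'' is a slight misnomer), but the logical structure is identical.
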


For the class of Banach spaces with the Schur property, we obtain the following characterization of the existence of sequential homeomorphisms. The theorem extends the results presented in \cite{Banakh}, which only apply to Banach spaces with separable dual spaces---note that for any infinite-dimensional Banach space $E$ with the Schur property its dual $E^*$ is never separable as $E$ is $\ell_1$-saturated and therefore $E^*$ admits a continuous linear surjection onto the space $\ell_\infty$.

\begin{lemma}\label{lem:seqhom}
The following hold for any Banach spaces $E$ and $F$:
\begin{enumerate}
	\item $d(E)=d_{seq}(E_w)$,
	\item if $E_w$ and $F_w$ are sequentially homeomorphic, then $d(E)=d(F)$.
\end{enumerate}
\end{lemma}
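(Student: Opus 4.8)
The plan is to establish (1) first and then derive (2) as a short consequence. For (1), I would prove the two inequalities $d_{seq}(E_w)\le d(E)$ and $d(E)\le d_{seq}(E_w)$ separately. The first is immediate: a norm-dense subset $A\subseteq E$ of cardinality $d(E)$ is in particular norm-sequentially dense, and since every norm-convergent sequence is weakly convergent, $A$ is sequentially dense in $E_w$; hence $d_{seq}(E_w)\le|A|=d(E)$. For the reverse inequality, let $A\subseteq E$ be sequentially dense in $E_w$ with $|A|=d_{seq}(E_w)$. The key point is that the norm-closed linear span $Y=\overline{\operatorname{span}}^{\|\cdot\|}(A)$ has density at most $|A|$ (if $A$ is infinite; the finite case is trivial since then $E$ is separable), and I claim $Y=E$. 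Indeed, $Y$ is norm-closed and convex, hence weakly closed by Mazur's theorem; since $A\subseteq Y$ is weakly sequentially dense in $E_w$ and $Y$ is weakly closed, every point of $E$ is a weak limit of a sequence in $A\subseteq Y$, so $E\subseteq Y$. Therefore $d(E)=d(Y)\le|A|=d_{seq}(E_w)$, which completes (1).

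For (2), suppose $f\colon E_w\to F_w$ is a sequential homeomorphism. A sequentially continuous surjection cannot increase the sequential density: if $A$ is sequentially dense in $E_w$, then for any $y\in F_w$ we pick $x\in E$ with $f(x)=y$ and a sequence $(x_n)$ in $A$ with $x_n\to x$ weakly; then $f(x_n)\to f(x)=y$ in $F_w$ by sequential continuity, so $f[A]$ is sequentially dense in $F_w$ and $d_{seq}(F_w)\le|f[A]|\le|A|=d_{seq}(E_w)$. Applying the same argument to $f^{-1}$ gives the reverse inequality, so $d_{seq}(E_w)=d_{seq}(F_w)$. Combining this with part (1) yields $d(E)=d_{seq}(E_w)=d_{seq}(F_w)=d(F)$, as desired.

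I do not anticipate a serious obstacle here; the only point requiring a little care is the passage from sequential weak density of $A$ to norm density of $\operatorname{span}(A)$, where the use of Mazur's theorem (norm-closed convex sets are weakly closed) is essential—one cannot simply take the weak closure of $A$, since the weak closure of a set need not coincide with the set of limits of weakly convergent sequences from it. Working with the closed linear span sidesteps this entirely.
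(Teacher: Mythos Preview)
Your proof is correct and follows essentially the same route as the paper. The only cosmetic difference is in the inequality $d(E)\le d_{seq}(E_w)$: the paper obtains it in one line from the chain $d_{seq}(E_w)\ge d(E_w)=d(E)$ (sequentially dense implies dense, and the standard fact that weak and norm density coincide for Banach spaces), whereas you unfold that last equality by taking the norm-closed linear span and invoking Mazur explicitly---which is exactly what underlies $d(E_w)=d(E)$ anyway.
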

\begin{proof}
(1) Any weakly sequentially dense set is weakly dense, so $d_{seq}(E_w)\ge d(E_w)=d(E)$. On the other hand, since the weak topology is weaker than the norm topology, any norm sequentially dense set is weakly sequentially dense, so $d_{seq}(E)\ge d_{seq}(E_w)$. But since for metric spaces dense sets are sequentially dense, we get $d(E)\ge d_{seq}(E)\ge d_{seq}(E_w)$.

(2) Since sequential homeomorphisms preserve sequential density, by (1) we have
\[d(E)=d_{seq}(E_w)=d_{seq}(F_w)=d(F).\]
\end{proof}

\begin{theorem}\label{theorem:weak_seq_homeo}
Let $E$ and $F$ be Banach spaces, both having the Schur property. Then, the spaces $E_w$ and $F_w$ are sequentially homeomorphic if and only if $d(E)=d(F)$.
\end{theorem}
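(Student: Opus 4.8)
The plan is to prove both implications, using Theorem~\ref{lee} to convert between the norm and weak topologies and Lemma~\ref{lem:seqhom} to handle densities. The forward direction is exactly Lemma~\ref{lem:seqhom}.(2): if $E_w$ and $F_w$ are sequentially homeomorphic, then $d(E)=d_{seq}(E_w)=d_{seq}(F_w)=d(F)$, since sequential homeomorphisms preserve sequential density. So the substance of the theorem is the converse.

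For the converse, suppose $d(E)=d(F)$ and both spaces have the Schur property. The idea is to build a sequential homeomorphism $E_w\to F_w$ by factoring through the norm topologies. By Theorem~\ref{lee}, the Schur property of $E$ gives a sequential homeomorphism $\varphi\colon E_w\to E$ (onto $E$ with the norm topology), and similarly the Schur property of $F$ gives a sequential homeomorphism $\psi\colon F\to F_w$. Thus it suffices to produce a sequential homeomorphism $E\to F$ between the two spaces in their norm topologies; composing $\psi\circ h\circ\varphi$ then yields the desired sequential homeomorphism $E_w\to F_w$. Now $E$ and $F$ are infinite-dimensional Banach spaces (Schur forces infinite-dimensionality of anything non-trivial, and the trivial/finite-dimensional case with $d(E)=d(F)$ is handled separately since finite-dimensional spaces with the same dimension are linearly homeomorphic). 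Since $d(E)=d(F)$, the Toruńczyk theorem (already invoked in the excerpt, e.g. in Corollary~\ref{see}) gives an honest homeomorphism $h\colon E\to F$ between them in the norm topology, which is in particular a sequential homeomorphism. This closes the argument.

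Let me record the composition explicitly: with $\varphi\colon E_w\to(E,\|\cdot\|)$ and $\psi\colon(F,\|\cdot\|)\to F_w$ sequential homeomorphisms from Theorem~\ref{lee}, and $h\colon(E,\|\cdot\|)\to(F,\|\cdot\|)$ the Toruńczyk homeomorphism, the map $\Phi=\psi\circ h\circ\varphi\colon E_w\to F_w$ is a composition of sequential homeomorphisms, hence a sequential homeomorphism; and the same is true of its inverse $\Phi^{-1}=\varphi^{-1}\circ h^{-1}\circ\psi^{-1}$. One should also note that when $E$ is finite-dimensional, $E_w=E$ topologically and $d(E)=d(F)$ forces $\dim E=\dim F$, so $E_w\cong F_w$ linearly; this degenerate case is trivial and the Toruńczyk step is only needed in the infinite-dimensional setting.

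The main point to get right — and the only step that is not completely mechanical — is the reduction of a \emph{weak}-topology statement to a \emph{norm}-topology statement via Theorem~\ref{lee}; the rest is bookkeeping about composing sequential homeomorphisms and an appeal to Toruńczyk. There is no serious obstacle here: the heavy lifting was done in Theorem~\ref{lee} (the Baire-category argument showing Schur $\iff$ $E\sim_{\mathrm{seq}}E_w$) and in Lemma~\ref{lem:seqhom}; this theorem is essentially their corollary. One small subtlety worth a sentence in the writeup is that Theorem~\ref{lee} gives a \emph{sequential} homeomorphism rather than a genuine one, so every composition in sight must be checked to be only sequentially continuous — which is immediate, since a composition of sequentially continuous maps is sequentially continuous.
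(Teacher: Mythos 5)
Your proof is correct and takes essentially the same route as the paper's: the converse is exactly Lemma~\ref{lem:seqhom}.(2), and the forward direction composes the identity maps furnished by Theorem~\ref{lee} with a Toru\'nczyk homeomorphism $E\to F$, exactly as in the paper. (Only your parenthetical aside is off: the Schur property does not force infinite-dimensionality --- every finite-dimensional space has it trivially, and $d(\mathbb{R})=d(\mathbb{R}^2)=\aleph_0$ --- but, like the paper, your main argument lives in the infinite-dimensional setting where Toru\'nczyk's theorem applies, so this does not affect the substance.)
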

\begin{proof}
Assume that $d(E)=d(F)$. By the Toru\'{n}czyk theorem, there exists a homeomorphism $S\colon E\rightarrow F$ of the Banach spaces. Of course, $S$ is also a sequential homeomorphism. Since both $E$ and $F$ have the Schur property, the identity mappings $I_1\colon E_w\rightarrow E$ and $I_2\colon F\rightarrow F_w$ are sequential homeomorphisms. Taking the composition $T=I_2 S I_1$, we get a sequential homeomorphism $E_w\rightarrow F_w$.

The converse implication follows from Lemma \ref{lem:seqhom}.(2) (and does not require the Schur property).
\end{proof}

The following proposition provides a wide class of examples of pairs of Banach spaces $E$ and $F$ such that the spaces $E_w$ and $F_w$ are  not homeomorphic.

\begin{proposition}
  Let $E$ and $F$ be separable Banach spaces. Assume that $E^*$ is separable and $F^*$ is non-separable.
  Then, $E_w$ and $F_w$ are not homeomorphic.
\end{proposition}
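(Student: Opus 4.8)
The plan is to isolate a topological invariant that detects separability of the dual and to transport it along the supposed homeomorphism. The elementary input is the classical equivalence: for a separable Banach space $G$, the dual $G^*$ is separable if and only if $(B_G,w)$ is metrizable, equivalently if and only if every norm-bounded subset of $G_w$ is weakly metrizable. I would first exploit this on the $E$-side: since $E^*$ is separable, each $(nB_E,w)$ is second countable and closed in $E_w$, and since every compact subset of $E_w$ is weakly compact, hence norm-bounded by Banach--Steinhaus, hence contained in some $nB_E$, the union of countable bases of the $(nB_E,w)$ is a countable $k$-network for $E_w$; thus $E_w$ is an $\aleph_0$-space. As the $\aleph_0$-property passes to subspaces, this settles the proposition for every $F$ for which $F_w$ fails to be an $\aleph_0$-space, which by the method of Proposition~\ref{ex} (using the results of \cite{Mich} on $C(L)_w$) is already a large class, e.g.\ every $F$ containing an isometric copy of $C(L)$ for an uncountable metric compactum $L$.

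The difficulty is the residual case: $F$ with $F^*$ non-separable but $F_w$ nevertheless an $\aleph_0$-space, such as $F=\ell_1$ or $F=L_1([0,1])$, where $(\ell_1)_w$ is an $\aleph_0$-space by \cite{GKKM}. Here no coarse invariant separates the classes, and a homeomorphism $h\colon E_w\to F_w$ need not preserve norm-boundedness, so the weak metrizability of $B_E$ cannot be transported naively. The one structure $h$ does preserve is compactness, so one transports weak compactness: every weakly compact subset of $G_w$ is norm-bounded, whence for $E^*$ separable every weakly compact subset of $F$ is weakly metrizable. This alone is still not sufficient --- it also holds when $F$ has the Schur property --- so one brings in the sequential structure: by Theorem~\ref{lee}, if $F$ has the Schur property then $F_w$ is sequentially homeomorphic to the Polish space $F$, whereas $E^*$ separable forces $E$ to fail the Schur property (a Schur space is $\ell_1$-saturated and hence has non-separable dual), so that $E_w$ is not sequentially homeomorphic to $E$; one must then upgrade this to the non-existence of any sequential homeomorphism of $E_w$ onto a completely metrizable space. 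For the non-Schur spaces in the residual class (such as $L_1$) one would argue separately --- for instance via $\sigma$-compactness, or via the separable-$L_1$ structure used in Theorem~\ref{theorem_cpm_homeo_fmw} --- and for the full statement one appeals to the topological classification of weak topologies of separable Banach spaces with separable dual (\cite{Banakh}), which determines $E_w$ up to homeomorphism inside that class.

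The step I expect to be the genuine obstacle is exactly this transfer: recovering, from a purely topological homeomorphism $h$, the ``bounded skeleton'' of the weak topology of $F$ --- equivalently, ruling out that $h$ could reorganize the non-metrizable ball $(B_F,w)$ into a weakly metrizable (or otherwise tame) part of $E_w$. The remaining ingredients --- the $\aleph_0$-space computation on the $E$-side, the invariance of weak compactness under $h$, and the Schur dichotomy of Theorem~\ref{lee} --- are routine once that is in hand.
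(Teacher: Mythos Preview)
Your proposal is not a proof; it is a plan with an explicitly acknowledged gap, and the gap is precisely the heart of the matter. You correctly identify that the obstacle is transporting the ``bounded skeleton'' across a homeomorphism $h\colon E_w\to F_w$, and you then try to route around it via $\aleph_0$-spaces, Schur-type dichotomies, and ultimately an appeal to an external classification theorem. None of this is needed, and the $\aleph_0$-route genuinely stalls on the residual cases you describe (e.g.\ $F=\ell_1$).

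The missing idea is a direct Baire category argument, and it resolves exactly the transfer problem you flag. Each $nB_E$ is weakly closed and, since $E^*$ is separable, weakly metrizable. Hence each image $h[nB_E]$ is closed and metrizable in $F_w$; in particular, closed in $F$ for the norm topology. Since $F=\bigcup_{n} h[nB_E]$ and $F$ is a Banach space, the Baire Category Theorem gives some $n_0$ for which $h[n_0 B_E]$ has nonempty norm interior, and therefore contains a norm-closed ball of $F$. That ball, as a subspace of the metrizable set $h[n_0 B_E]\subseteq F_w$, is weakly metrizable; by translation and scaling, $B_F$ is weakly metrizable, contradicting the non-separability of $F^*$. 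This is the paper's proof in full.

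So your instinct that ``weak metrizability of $B_E$ cannot be transported naively'' is wrong in a useful way: it \emph{can} be transported, not pointwise to $B_F$, but to \emph{some} ball of $F$ via Baire, and that is all one needs. The elaborate machinery ($\aleph_0$-spaces, Theorem~\ref{lee}, the classification in \cite{Banakh}) is unnecessary here.
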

\begin{proof}
Since $E^*$ is separable, the closed unit ball $B_E$ of $E$ is metrizable in the weak topology and hence, for every $n\in\mathbb{N}$, the set $B_n=n B_E$ is metrizable in the weak topology. On the other hand, the closed unit ball $B_F$ of $F$ is not metrizable in the weak topology. Therefore, the spaces $E_w$ and $F_w$ are not  homeomorphic. To see this, assume there is a homeomorphism $T\colon E_w\rightarrow F_w$. Then, for each $n\in\mathbb{N}$ the set $T[B_n]$ is a closed metrizable subset of $F_w$. In particular, each $T[B_n]$ is closed in $F$ (in the norm topology), so since $F=\bigcup_{n=0}^\infty T[B_n]$, by the Baire Category Theorem, there is $n_0\in\mathbb{N}$ such that $T[B_{n_0}]$ contains a closed ball $B$ of $F$. Since $T[B_{n_0}]$ is metrizable in the weak topology, $B$ is also metrizable in the weak topology, and hence $B_F$ is as well, a contradiction.
\end{proof}

Taking into account that by \cite[Corollary~2.7]{Aliaga} the Lipschitz-free space on a complete discrete metric space has the Schur property, Theorem~\ref{lee} implies the following characterization of the Schur property for Lipschitz-free spaces.

\begin{theorem}\label{theorem_schur_discrete}
Let $M$ be an infinite metric space. The following are equivalent:
\begin{enumerate}
	\item the space $\mathcal{F}(M)$ has the Schur property,
	\item for every complete discrete metric space $N$ with cardinality $d(M)$, the spaces $\mathcal{F}(M)_w$ and $\mathcal{F}(N)_w$ are sequentially homeomorphic,
	\item there is a complete discrete metric space $N$ with cardinality $d(M)$ for which the spaces $\mathcal{F}(M)_w$ and $\mathcal{F}(N)_w$ are sequentially homeomorphic.          
\end{enumerate}
\end{theorem}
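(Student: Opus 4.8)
The plan is to prove the cycle of implications (1)$\Rightarrow$(2)$\Rightarrow$(3)$\Rightarrow$(1). The whole argument is essentially an assembly of Theorem~\ref{lee}, Theorem~\ref{theorem:weak_seq_homeo}, the equality $d(\mathcal{F}(M))=d(M)$ recalled in Remark~\ref{rem_fm_homeo}, the Toru\'nczyk theorem (\cite{Tor}), and Aliaga's result \cite[Corollary~2.7]{Aliaga} that $\mathcal{F}(N)$ has the Schur property for every complete discrete metric space $N$. Two elementary facts will be used repeatedly: for a discrete metric space $N$ one has $d(N)=|N|$ (every singleton is open, so $N$ is the only dense subset of itself), and for every infinite cardinal $\kappa$ there is a complete discrete metric space of cardinality $\kappa$---for instance any set of size $\kappa$ equipped with the metric taking only the values $0$ and $1$, which is complete since every Cauchy sequence in it is eventually constant. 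Since $M$ is infinite, $d(M)\geq\aleph_0$, so auxiliary spaces $N$ as in (2) and (3) do exist.

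For (1)$\Rightarrow$(2), I would fix any complete discrete metric space $N$ with $|N|=d(M)$ and verify the hypotheses of Theorem~\ref{theorem:weak_seq_homeo} for the pair $\mathcal{F}(M)$, $\mathcal{F}(N)$: the space $\mathcal{F}(M)$ has the Schur property by assumption, $\mathcal{F}(N)$ has it by \cite[Corollary~2.7]{Aliaga}, and the two have equal density, since $d(\mathcal{F}(M))=d(M)=|N|=d(N)=d(\mathcal{F}(N))$. Theorem~\ref{theorem:weak_seq_homeo} then yields a sequential homeomorphism between $\mathcal{F}(M)_w$ and $\mathcal{F}(N)_w$. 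The implication (2)$\Rightarrow$(3) is immediate, as a complete discrete metric space of cardinality $d(M)$ exists by the remark above.

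For (3)$\Rightarrow$(1), let $N$ be a complete discrete metric space with $|N|=d(M)$ for which $\mathcal{F}(M)_w$ and $\mathcal{F}(N)_w$ are sequentially homeomorphic. The idea is to build a sequential homeomorphism $\mathcal{F}(M)\to\mathcal{F}(M)_w$ and then apply Theorem~\ref{lee}. I would chain three maps: first, a sequential homeomorphism $\mathcal{F}(M)\to\mathcal{F}(N)$, which exists because both are infinite-dimensional Banach spaces of the same density---again $d(\mathcal{F}(M))=d(M)=d(N)=d(\mathcal{F}(N))$---hence homeomorphic by the Toru\'nczyk theorem; second, a sequential homeomorphism $\mathcal{F}(N)\to\mathcal{F}(N)_w$, which exists by Theorem~\ref{lee} since $\mathcal{F}(N)$ has the Schur property; and third, the given sequential homeomorphism $\mathcal{F}(N)_w\to\mathcal{F}(M)_w$. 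The composition is a sequential homeomorphism $\mathcal{F}(M)\to\mathcal{F}(M)_w$, so Theorem~\ref{lee} yields that $\mathcal{F}(M)$ has the Schur property.

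I do not expect a genuine obstacle here, since the proof is mostly bookkeeping with the quoted theorems. The two points that call for a little attention are the density-versus-cardinality accounting for the auxiliary discrete spaces and, in (3)$\Rightarrow$(1), remembering to route through the Toru\'nczyk theorem to replace the metric space $\mathcal{F}(N)$ by the metric space $\mathcal{F}(M)$; without that step one is left only with a sequential homeomorphism between $\mathcal{F}(M)_w$ and the metric space $\mathcal{F}(N)$, which is not literally the hypothesis of Theorem~\ref{lee}.
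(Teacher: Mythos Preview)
Your proposal is correct and follows essentially the same route as the paper: both arguments cycle through (1)$\Rightarrow$(2)$\Rightarrow$(3)$\Rightarrow$(1), invoke \cite[Corollary~2.7]{Aliaga} for the Schur property of $\mathcal{F}(N)$, use Toru\'nczyk's theorem together with Theorem~\ref{lee} to pass between the norm and weak pictures, and compose. The only cosmetic difference is that for (1)$\Rightarrow$(2) you appeal directly to Theorem~\ref{theorem:weak_seq_homeo}, whereas the paper reproves its content inline; your version is slightly more economical.
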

\begin{proof}
Assume (1), that is, that $\mathcal{F}(M)$ has the Schur property and let $N$ be a complete discrete metric space of cardinality $d(M)$. By the Toru\'nczyk theorem, the spaces $\mathcal{F}(M)$ and $\mathcal{F}(N)$ are homeomorphic, hence sequentially homeomorphic. Since both $\mathcal{F}(M)$ and $\mathcal{F}(N)$ have the Schur property, by Theorem~\ref{lee}, the spaces $\mathcal{F}(M)$ and $\mathcal{F}(M)_w$ are sequentially homeomorphic and the spaces $\mathcal{F}(N)$ and $\mathcal{F}(N)_w$ are sequentially homeomorphic. Now, (2) follows by composing appropriate sequential homeomorphisms.

Implication (2)$\Rightarrow$(3) is clear.

In order to show that (3) implies (1), let $N$ be a complete discrete metric space for which $\mathcal{F}(M)_w$ and $\mathcal{F}(N)_w$ are sequentially homeomorphic. Since $\mathcal{F}(N)$ has the Schur property, by Theorem~\ref{lee} also the spaces $\mathcal{F}(N)$ and $\mathcal{F}(N)_w$ are sequentially homeomorphic. Since $\mathcal{F}(M)$ and $\mathcal{F}(N)$ have the same density, they are homeomophic by Toruńczyk's theorem. Composing the corresponding mappings yields a sequential homeomorphism between $\mathcal{F}(M)_w$ and $\mathcal{F}(M)$, which, again by Theorem \ref{lee}, yields that $\mathcal{F}(M)$ has the Schur property.
\end{proof}

\begin{remark}
For every infinite cardinal number $\kappa$, let $N_\kappa$ be a metric space such that $d(N_\kappa)=\kappa$ and the Lipschitz-free space $\mathcal{F}(N_\kappa)$ has the Schur property. Then, by its proof, Theorem \ref{theorem_schur_discrete} can be rephrased in the following way: \emph{For every metric space $M$, the space $\mathcal{F}(M)$ has the Schur property if and only if $\mathcal{F}(M)_w$ and $\mathcal{F}(N_{d(M)})_w$ are sequentially homeomorphic.}
\end{remark}

Recall that the Josefson--Nissenzweig theorem asserts that for every infinite-dimensional Banach space $E$ there exists a sequence $(x_{n}^{*})_{n=0}^\infty$ in the dual space $E^*$ which is weak* convergent to $0$ and such that $\|x_{n}^{*}\|=1$ for all $n\in\mathbb{N}$. Applying the argument presented in the proof of Theorem~\ref{lee} and the Josefson--Nissenzweig theorem one gets the following results (where, recall, $w^*$ denotes the weak* topology).

\begin{corollary}\label{co}
Let $E$ be an infinite-dimensional Banach space. Then, the spaces $E^*$ and $(E^*,w^*)$  are not sequentially homeomorphic.
\end{corollary}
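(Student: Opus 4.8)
The plan is to argue by contradiction, running the machinery from the proof of Theorem~\ref{lee} but feeding it a Josefson--Nissenzweig sequence in place of the sequence coming from the failure of the Schur property. So suppose $T\colon E^*\to(E^*,w^*)$ is a sequential homeomorphism, write $\tau$ for the norm topology of the Banach space $E^*$, and let $\gamma$ denote the topology on $E^*$ consisting of all \emph{$w^*$-sequentially open} sets, that is, sets $U$ such that every sequence which is $w^*$-convergent to a point of $U$ is eventually contained in $U$.

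First I would verify, exactly as in the proof of Theorem~\ref{lee}, three elementary facts: $w^*\subseteq\gamma\subseteq\tau$; the topologies $w^*$ and $\gamma$ have precisely the same convergent sequences; and, consequently, $T$ is not merely a sequential homeomorphism but an honest homeomorphism $(E^*,\tau)\to(E^*,\gamma)$. For $\gamma\subseteq\tau$ one uses metrizability of $\tau$: a $w^*$-sequentially open set containing $x$ must contain some norm-ball around $x$, for otherwise a norm-convergent, hence $w^*$-convergent, sequence would violate sequential openness. For the upgrade of $T$: if $U$ is $\tau$-open it is $\tau$-sequentially open, so $T[U]$ is $w^*$-sequentially open by sequential continuity of $T$, hence $\gamma$-open; conversely if $T[U]$ is $\gamma$-open it is $w^*$-sequentially open, so $U$ is $\tau$-sequentially open and therefore $\tau$-open by metrizability. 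It follows that $(E^*,\gamma)$ is homeomorphic to the complete metric space $(E^*,\tau)$, hence is a Baire space.

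Next I would combine this Baire property with the Banach--Alaoglu theorem. Each ball $mB_{E^*}$, $m\in\mathbb{N}$, is $w^*$-compact, hence $w^*$-closed, hence $\gamma$-closed, and these balls cover $E^*$; so some $mB_{E^*}$ has nonempty $\gamma$-interior, and (enlarging $m$ if necessary) I may pick $y\in E^*$ with $\|y\|<m$ and a $\gamma$-open neighbourhood $U_y$ of $y$ with $U_y\subseteq mB_{E^*}$. By the Josefson--Nissenzweig theorem there is a sequence $(x_n^*)_{n=0}^\infty$ in $E^*$ with $\|x_n^*\|=1$ for all $n$ and $x_n^*\to0$ in $w^*$; put $z_n=2m\,x_n^*$. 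Then $\|z_n\|=2m$, so $\|z_n+y\|\ge 2m-\|y\|>m$ and hence $z_n+y\notin mB_{E^*}$ for every $n$. On the other hand $z_n\to0$ in $w^*$, so $z_n+y\to y$ in $w^*$ and therefore also in $\gamma$ (same convergent sequences), whence $z_n+y\in U_y\subseteq mB_{E^*}$ for almost all $n$---a contradiction. This proves the corollary.

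The only point that needs care---and it is precisely the one already handled inside the proof of Theorem~\ref{lee}---is the promotion of the hypothetical sequential homeomorphism to a genuine homeomorphism onto $(E^*,\gamma)$; this is where metrizability of $\tau$ and the coincidence of the $w^*$- and $\gamma$-convergent sequences do the real work. Once that is in place, the Josefson--Nissenzweig sequence plays exactly the role played by a weakly null, norm-bounded-below sequence in the Schur argument, and no genuinely new obstacle appears.
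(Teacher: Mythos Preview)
Your proposal is correct and follows precisely the approach indicated in the paper: the paper only states that the corollary follows by ``applying the argument presented in the proof of Theorem~\ref{lee} and the Josefson--Nissenzweig theorem,'' and you have spelled this out faithfully, replacing the weakly null norm-bounded-below sequence by a Josefson--Nissenzweig sequence and using Banach--Alaoglu to obtain $\gamma$-closedness of the balls $mB_{E^*}$.
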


\begin{corollary}\label{co2}
Let $M$ be an infinite metric space. Then, the spaces $\lip_0(M)$ and $(\lip_0(M),w^*)$ are not sequentially homeomorphic.
\end{corollary}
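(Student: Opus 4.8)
The plan is to deduce Corollary~\ref{co2} directly from Corollary~\ref{co}. Recall from Section~\ref{sec:prelim} that $\lip_0(M)$ is isometrically isomorphic to the dual $\mathcal{F}(M)^*$ of the Lipschitz-free space $\mathcal{F}(M)$, and that under this identification the weak* topology $w^*$ of $\lip_0(M)$ is precisely $\sigma(\mathcal{F}(M)^*,\mathcal{F}(M))$. Since $M$ is infinite, $\mathcal{F}(M)$ is infinite-dimensional; indeed, by Corollary~\ref{hano_ell1} it contains a complemented copy of $\ell_1(d(M))$. Hence one may apply Corollary~\ref{co} with $E=\mathcal{F}(M)$ to conclude that $E^*\simeq\lip_0(M)$ and $(E^*,w^*)$, i.e. $(\lip_0(M),w^*)$, are not sequentially homeomorphic. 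In this sense the statement is a formal consequence of what precedes it, and the substance lies entirely in Corollary~\ref{co}.

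For completeness I would also spell out the mechanism behind Corollary~\ref{co}, which is the Baire-category argument from the proof of Theorem~\ref{lee} with the Josefson--Nissenzweig theorem playing the role previously played by the failure of the Schur property. Suppose towards a contradiction that $T\colon\lip_0(M)\to(\lip_0(M),w^*)$ is a sequential homeomorphism; write $\tau$ for the norm topology and let $\gamma$ be the topology on $\lip_0(M)$ consisting of all weak*-sequentially open sets. Exactly as in Theorem~\ref{lee} one checks $w^*\subseteq\gamma\subseteq\tau$ --- here $\gamma\subseteq\tau$ because a norm-convergent sequence is weak*-convergent, so a weak*-sequentially open set is norm-sequentially open and hence (by metrizability of $\tau$) norm-open --- and $w^*$ and $\gamma$ have the same convergent sequences. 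Therefore $T\colon(\lip_0(M),\tau)\to(\lip_0(M),\gamma)$ is a homeomorphism, so $(\lip_0(M),\gamma)$ is a Baire space. Each dilate $mB_{\lip_0(M)}$ of the unit ball is weak*-compact by Banach--Alaoglu, hence weak*-closed and so $\gamma$-closed; since $\lip_0(M)=\bigcup_{m}mB_{\lip_0(M)}$, the Baire property yields $m$ for which $mB_{\lip_0(M)}$ has nonempty $\gamma$-interior $U$, and as $U$ is also $\tau$-open we may pick $y\in U$ with $\|y\|<m$, so that $U$ is a $\gamma$-neighbourhood of $y$ contained in $mB_{\lip_0(M)}$.

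The contradiction is then produced by Josefson--Nissenzweig: choose $(x_n^*)_{n=0}^\infty$ in $\lip_0(M)$ with $\|x_n^*\|=1$ and $x_n^*\to0$ in $w^*$, and set $z_n=2m\,x_n^*$. Then $z_n\to0$ in $w^*$, hence $z_n+y\to y$ in $w^*$ and so in $\gamma$, whereas $\|z_n+y\|\ge2m-\|y\|>m$ gives $z_n+y\notin mB_{\lip_0(M)}$ for every $n$, contradicting $z_n+y\in U$ for all sufficiently large $n$. I do not expect any genuine obstacle: the only things to verify are the identification of $w^*$ with $\sigma(\mathcal{F}(M)^*,\mathcal{F}(M))$ and the infinite-dimensionality of $\mathcal{F}(M)$, both already recorded, plus the inclusions $w^*\subseteq\gamma\subseteq\tau$ and the coincidence of convergent sequences, which are verbatim the corresponding step in the proof of Theorem~\ref{lee} since only the abstract configuration of a metrizable topology, a coarser topology, and the intermediate sequentially-open topology is used there.
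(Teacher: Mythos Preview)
Your proposal is correct and follows exactly the route indicated in the paper: Corollary~\ref{co2} is deduced from Corollary~\ref{co} by taking $E=\mathcal{F}(M)$ (infinite-dimensional for infinite $M$), and the substance of Corollary~\ref{co} is the Baire-category argument from Theorem~\ref{lee} with the Josefson--Nissenzweig theorem supplying the weak*-null normalized sequence. Your expanded account of that mechanism is accurate and matches the paper's ``applying the argument presented in the proof of Theorem~\ref{lee} and the Josefson--Nissenzweig theorem''.
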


\section{Operators from spaces $\lip_0(M)$ onto $\ell_1$\label{sec:ell_1}}

In this section we will study the question for which metric spaces $M$ there exists a continuous linear surjection from the space $\lip_0(M)$ onto the Banach spaces $c_0$ or $\ell_1$. Recall that Rosenthal \cite[Theorem~4.22]{HajekBi} proved that the Banach space $\ell_\infty(\kappa)$, where $\kappa$ is an infinite cardinal number, always admits a continuous linear surjection onto the Hilbert space $\ell_2(2^\kappa)$, so in particular onto the space $\ell_2$. Since for any infinite metric space $M$, by Theorem~\ref{cor1}, $\lip_0(M)$ contains a complemented copy of $\ell_\infty(d(M))$, Rosenthal's result yields the following observation, being a main motivation for this section.

\begin{corollary}\label{separable}
For any infinite metric space $M$ there is a continuous linear surjection $T\colon\lip_0(M)\rightarrow\ell_2(2^{d(M)})$. In particular, there is a continuous linear surjection $S\colon\lip_0(M)\rightarrow\ell_2$ and hence $\lip_0(M)$ has a separable quotient.
\end{corollary}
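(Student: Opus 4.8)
The plan is to chain together two ingredients already available: the fact that $\lip_0(M)$ contains a \emph{complemented} copy of $\ell_\infty(d(M))$ (Theorem~\ref{cor1}) and Rosenthal's result that $\ell_\infty(\kappa)$ admits a continuous linear surjection onto $\ell_2(2^\kappa)$ for every infinite cardinal $\kappa$. First I would fix, by Theorem~\ref{cor1}, a closed subspace $Y\subseteq\lip_0(M)$ together with an isomorphism $I\colon Y\to\ell_\infty(d(M))$ and a (bounded linear) projection $P\colon\lip_0(M)\to Y$. Then, invoking \cite[Theorem~4.22]{HajekBi}, I would take a continuous linear surjection $R\colon\ell_\infty(d(M))\to\ell_2(2^{d(M)})$. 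The composition $T=R\circ I\circ P$ is a bounded linear map $\lip_0(M)\to\ell_2(2^{d(M)})$, and it is surjective because $P$ is onto $Y$, $I$ is a bijection, and $R$ is onto. This gives the first assertion.

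For the ``in particular'' clause, I would use that $\ell_2(2^{d(M)})$ is a Hilbert space with $2^{d(M)}\ge\aleph_0$, hence it contains an orthogonal (so norm-one complemented) copy of $\ell_2=\ell_2(\aleph_0)$; let $Q\colon\ell_2(2^{d(M)})\to\ell_2$ denote the corresponding orthogonal projection, which is of course a continuous linear surjection. Then $S=Q\circ T\colon\lip_0(M)\to\ell_2$ is a continuous linear surjection. Finally, by the open mapping theorem $\lip_0(M)/\ker S$ is isomorphic to $\ell_2$, which is separable and infinite-dimensional, so $\lip_0(M)$ has a separable quotient.

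There is essentially no hard step here: the argument is a routine composition of maps supplied by Theorem~\ref{cor1} and Rosenthal's theorem. The only points warranting a word of care are that one must extract the \emph{complemented} (not merely isomorphic) copy of $\ell_\infty(d(M))$ from Theorem~\ref{cor1} so that $P$ exists, that Rosenthal's surjection lands on the full $\ell_2(2^{d(M)})$ (and not just on a separable Hilbert space), and that the passage to a separable quotient is via the open mapping theorem applied to $S$.
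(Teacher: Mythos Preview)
Your proof is correct and follows essentially the same route as the paper, which simply invokes Theorem~\ref{cor1} together with Rosenthal's surjection $\ell_\infty(\kappa)\to\ell_2(2^\kappa)$. One small point worth making explicit: Theorem~\ref{cor1} as stated only guarantees an \emph{isomorphic} copy of $\ell_\infty(d(M))$, and both you and the paper use that this copy is automatically complemented---this follows from the injectivity of $\ell_\infty(d(M))$ as a Banach space, so the projection $P$ always exists.
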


In what follows, by saying that a Banach space $E$ is a \emph{predual} of a Banach space $F$ we mean that $F$ is isometrically isomorphic to the dual space $E^*$. For a Banach space $E$ and a natural number $n\ge0$ we set: $E^{*(0)}=E$ and $E^{*(n+1)}=(E^{*(n)})^*$.

Note that there is no continuous linear surjection from $\ell_2$ onto $c_0$ or any space $\ell_p$ with $p\neq2$ (since they are not Hilbert spaces). Note also that for any infinite metric space $M$ the Lipschitz-free space $\mathcal{F}(M)$ and the dual space $\lip_0(M)^*$ both contain complemented copies of $\ell_1$ (in the latter case by Theorem \ref{cor1} and  the fact that the dual space $\ell_\infty^*$ contains such a copy). It follows that, for any $n\ge0$, the $n$-th dual space $\lip_0(M)^{*(n)}$ contains a complemented copy of $\ell_1$ if $n$ is odd, and a complemented copy of $\ell_\infty$ if $n$ is even.

Recall that, for any Banach space $E$, the dual space $E^*$ is complemented in the third dual space $E^{***}$ and so in the $(2n+1)$-th dual space $E^{*(2n+1)}$ for any $n\ge0$, and that, by \cite[Theorem 2]{Lindenstrauss} or \cite[Corollary 5.4]{Weaver}, the space $\lip_0(E)$ contains a $1$-complemented copy of $E^*$. 
We thus have the following result.

\begin{proposition}\label{prop:lip0dualell1}
Let $E$ be a Banach space such that the dual space $E^*$ admits a continuous  operator onto $\ell_1$. Then, for any $n\ge 0$, the space $\lip_0(E^{*(2n)})$ admits such an operator, too.

In particular, if $E$ is a predual of $\ell_1$ (e.g. $E=c_0$), then there is a continuous linear surjection from $\lip_0(E)$ onto $\ell_1$.
\end{proposition}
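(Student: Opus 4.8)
The plan is to realise $\ell_1$ as the range of a composition of three continuous linear maps, the first of which extracts from $\lip_0(E^{*(2n)})$ a complemented copy of a dual space into which $E^*$ is complemented. First I would apply \cite[Theorem~2]{Lindenstrauss} (equivalently \cite[Corollary~5.4]{Weaver}), quoted just before the statement, to the Banach space $G=E^{*(2n)}$: this provides a bounded linear projection of $\lip_0(E^{*(2n)})$ onto a subspace isometrically isomorphic to $G^*=E^{*(2n+1)}$, and hence, after composing with that isometry, a continuous linear surjection $Q_1\colon\lip_0(E^{*(2n)})\to E^{*(2n+1)}$.

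Next I would invoke the fact, also recalled in the excerpt, that for every Banach space the dual $E^*$ is complemented in each odd iterated dual $E^{*(2n+1)}$; let $Q_2\colon E^{*(2n+1)}\to E^*$ denote a corresponding bounded linear projection (for $n=0$ one simply takes $Q_2=\mathrm{id}_{E^*}$). Finally, let $S\colon E^*\to\ell_1$ be the continuous linear surjection furnished by the hypothesis. Then $T=S\circ Q_2\circ Q_1\colon\lip_0(E^{*(2n)})\to\ell_1$ is a composition of continuous linear maps, and it is surjective since $S$ is surjective while $Q_2\circ Q_1$ maps $\lip_0(E^{*(2n)})$ onto all of $E^*$; this proves the first assertion.

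For the ``in particular'' clause, note that if $E$ is a predual of $\ell_1$ then $E^*$ is isometrically isomorphic to $\ell_1$, which obviously admits a continuous linear surjection onto $\ell_1$; specialising the first part to $n=0$ then yields a continuous linear surjection from $\lip_0(E^{*(0)})=\lip_0(E)$ onto $\ell_1$, and the case $E=c_0$ follows because $c_0^{*}=\ell_1$. No real obstacle is expected: the statement is a formal consequence of the two facts quoted immediately before it, and the only step that merits a moment's care is verifying that $Q_2\circ Q_1$ is onto $E^*$, so that postcomposition with $S$ retains surjectivity onto $\ell_1$.
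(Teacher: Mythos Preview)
Your proof is correct and follows exactly the approach the paper indicates: the proposition is stated without an explicit proof, merely as an immediate consequence of the two facts recalled just before it (that $\lip_0(G)$ contains a $1$-complemented copy of $G^*$, and that $E^*$ is complemented in $E^{*(2n+1)}$), which is precisely the chain $\lip_0(E^{*(2n)})\twoheadrightarrow E^{*(2n+1)}\twoheadrightarrow E^*\twoheadrightarrow \ell_1$ you assemble.
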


\begin{corollary}
If $E$ is a separable Banach space with no non-trivial cotype, then $\lip_0(E)$ admits a continuous linear surjection onto $\ell_1$.
\end{corollary}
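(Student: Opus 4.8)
The plan is to deduce the statement from Proposition~\ref{prop:lip0dualell1}. That proposition, applied with $n=0$, says that if the dual space $E^{*}$ admits a continuous linear surjection onto $\ell_{1}$, then so does $\lip_{0}(E)$. So the whole problem reduces to producing a continuous linear surjection $E^{*}\to\ell_{1}$ from the hypothesis that $E$ is separable and has no non-trivial cotype.

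To build such a surjection I would start from the Maurey--Pisier theorem: a Banach space fails every non-trivial cotype exactly when $c_{0}$ is finitely representable in it, i.e. for every $n\in\mathbb{N}$ and every $\varepsilon>0$ there is a subspace of $E$ that is $(1+\varepsilon)$-isomorphic to $\ell_{\infty}^{n}$. Since $\ell_{\infty}^{n}$ is $1$-injective, each such subspace admits a norm-preserving extension of the identifying isomorphism, hence is $(1+\varepsilon)^{2}$-complemented in $E$; taking adjoints of these projections shows that $E^{*}$ contains, for every $n$, a complemented subspace $(1+\varepsilon)$-isomorphic to $\ell_{1}^{n}$, with complementation constants bounded uniformly in $n$. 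The remaining task is to promote this uniform family of finite-dimensional $\ell_{1}$-blocks to a single bounded linear map of $E^{*}$ onto $\ell_{1}(\mathbb{N})$. The strategy I would pursue is to choose the almost-isometric copies of $\ell_{\infty}^{n}$ inside $E$ in a sufficiently ``independent'' way, iterating the finite representability on successive blocks and using the separability of $E$ to keep everything countable, so that the corresponding dual blocks jointly span an isomorphic copy of $\ell_{1}$ inside $E^{*}$; composing a bounded projection onto that copy with the obvious surjection $\ell_{1}\to\ell_{1}$ then produces the desired map, and Proposition~\ref{prop:lip0dualell1} finishes the argument.

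The hard part will be exactly this last passage from local to global $\ell_{1}$-structure. Finite representability of $c_{0}$ supplies only finite-dimensional almost-isometric pieces, and the real work is to arrange them so that their images combine inside $E^{*}$ with an $\ell_{1}$-type behaviour rather than, say, an $\ell_{2}$-type behaviour; this is where the separability assumption and the precise, coherent choice of the blocks must be exploited, and it is the step I would expect to require the most care. Everything preceding it --- the cotype characterisation, the $1$-injectivity argument, and the reduction via Proposition~\ref{prop:lip0dualell1} --- is routine.
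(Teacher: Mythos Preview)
Your reduction to Proposition~\ref{prop:lip0dualell1} is natural, but the step you flag as ``the hard part'' is not merely hard --- it is false in general, so the approach cannot work as stated. Consider $E=\big(\bigoplus_{n\in\mathbb{N}}\ell_\infty^n\big)_{\ell_2}$. This space is separable and contains isometric copies of every $\ell_\infty^n$, so by Maurey--Pisier it has no non-trivial cotype. Its dual is $E^*=\big(\bigoplus_{n\in\mathbb{N}}\ell_1^n\big)_{\ell_2}$, which is reflexive (being an $\ell_2$-sum of finite-dimensional spaces). A reflexive space cannot surject continuously and linearly onto $\ell_1$, since $\ell_1$ would then be a reflexive quotient. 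Thus the uniform presence of complemented $\ell_1^n$'s in $E^*$ does \emph{not} globalise to a surjection $E^*\to\ell_1$, and the finite-representability route via $E^*$ is blocked.

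The paper avoids this obstacle by working at the level of $\lip_0$ rather than at the level of $E^*$: it invokes \cite[Theorem~3.1]{C-C-D}, which shows that for any separable Banach space $E$ with no non-trivial cotype the space $\lip_0(E)$ contains a \emph{complemented} copy of $\lip_0(c_0)$. One then applies Proposition~\ref{prop:lip0dualell1} to $c_0$ (a predual of $\ell_1$) to get a surjection $\lip_0(c_0)\to\ell_1$, and composes with the projection. The crucial difference is that the Candido--C\'uth--Doucha result exploits nonlinear (Lipschitz) structure; it does not attempt to find $c_0$ or $\ell_1$ linearly inside $E$ or $E^*$, which the example above shows need not happen.
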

\begin{proof}
  By \cite[Theorem 3.1]{C-C-D}, $\lip_0(E)$ contains a complemented copy of $\lip_0(c_0)$ and thus the claim follows by Proposition~\ref{prop:lip0dualell1}.
\end{proof}

The second statement in the above proposition can be strengthened in the case of $E=c_0$ as follows, using the lifting property of separable Banach spaces.

\begin{lemma}\label{lemma:Lip0CKNotGP}
Let $E$ be a separable Banach space. If $E$ contains an isomorphic copy of a predual $F$ of $\ell_1$, then there is a continuous linear surjection $T\colon\lip_0(E)\rightarrow \ell_1$.

In particular, if $E$ contains an isometric (not necessarily linear) copy of $c_0$, then there is such an operator.
\end{lemma}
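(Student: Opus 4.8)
The plan is to realise $T$ as a short composition of maps all of which are already available from the results quoted above. Fix a closed linear subspace $F_0\subseteq E$ and an isomorphism $u\colon F_0\to F$, so that $F_0^*$ is isomorphic to $F^*$, which in turn is isometrically $\ell_1$ (this is precisely what it means for $F$ to be a predual of $\ell_1$). By the Lindenstrauss--Weaver fact recalled just before Proposition~\ref{prop:lip0dualell1}, $\lip_0(E)$ contains a $1$-complemented copy of $E^*$; let $P\colon\lip_0(E)\to E^*$ be the associated (norm-one) projection, regarded as a surjection onto $E^*$. The restriction-of-functionals map $r\colon E^*\to F_0^*$, $r(\xi)=\xi|_{F_0}$, is a continuous linear surjection by the Hahn--Banach theorem, and $(u^{-1})^*\colon F_0^*\to F^*$ is a linear isomorphism onto $F^*\cong\ell_1$. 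Then $T:=(u^{-1})^*\circ r\circ P$ is a continuous linear surjection $\lip_0(E)\to\ell_1$, being the composition of a surjective projection with two continuous linear surjections. One may equally well route the argument through $\lip_0(F_0)$: the restriction operator $\lip_0(E)\to\lip_0(F_0)$ is onto by McShane's extension theorem (real-valued Lipschitz functions extend with the same Lipschitz constant), and $\lip_0(F_0)$ projects onto its $1$-complemented copy of $F_0^*\cong\ell_1$.

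For the ``in particular'' statement, suppose $E$ contains an isometric but not necessarily linear copy of $c_0$, say via a metric isometry $\phi\colon c_0\to E$ with image $A=\phi(c_0)$. The plan here is to reduce to the case just treated. Since $c_0$ is separable, the Godefroy--Kalton lifting property of separable Banach spaces upgrades the metric embedding $\phi$ to a linear isomorphic embedding of $c_0$ into $E$; as $c_0$ is a predual of $\ell_1$ (indeed $c_0^*=\ell_1$ isometrically), the first part of the lemma applies verbatim. Alternatively, and this bypasses the lifting property altogether, one first moves the base point of $E$ to $\phi(0)$ -- harmless because $\lip_0$ is independent of the base point up to isometric isomorphism -- so that the restriction operator $R\colon\lip_0(E)\to\lip_0(A)$ is a continuous linear surjection by McShane's theorem; then the composition operator $g\mapsto g\circ\phi$ is a linear isometric isomorphism of $\lip_0(A)$ onto $\lip_0(c_0)$, and $\lip_0(c_0)$ admits a continuous linear surjection onto $\ell_1$ by Proposition~\ref{prop:lip0dualell1}. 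Composing these three maps gives $T$ once more.

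I expect no serious obstacle: the proof is an assembly of facts already in the paper together with Hahn--Banach and McShane extension. The one genuinely new point is the bridge from a \emph{non-linear} isometric copy of $c_0$ to an object one can compute with -- either invoking the Godefroy--Kalton lifting property to recover an honest linear copy of $c_0$, or, in the self-contained route, checking that a McShane extension of a function vanishing at $\phi(0)$ still vanishes there, which is immediate from the infimal-convolution formula for the extension together with the base-point independence of $\lip_0$. It is also worth recording explicitly that a predual $F$ of $\ell_1$ is automatically separable (its dual $\ell_1$ is), so $F_0$ lies in a separable subspace of $E$; the separability hypothesis on $E$ is therefore used, if at all, only to make the chosen formulation of the lifting property directly applicable.
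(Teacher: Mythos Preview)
Your argument is correct, but the main statement is handled by a different route than the paper's. The paper uses the Godefroy--Kalton lifting property of \emph{separable} spaces to embed $E$ (hence $F$) linearly into $\mathcal{F}(E)$, and then transposes this embedding to obtain the surjection $\lip_0(E)=\mathcal{F}(E)^*\to F^*=\ell_1$. Your main route instead uses the Lindenstrauss--Weaver fact that $E^*$ is $1$-complemented in $\lip_0(E)$ together with the Hahn--Banach restriction $E^*\to F_0^*\cong\ell_1$. This is more elementary --- it avoids the lifting property entirely --- and, as you observe, does not actually consume the separability hypothesis on $E$; it therefore gives a slight strengthening of the first sentence of the lemma. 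Your alternative route through $\lip_0(F_0)$ is likewise correct and is essentially the mechanism behind Proposition~\ref{prop:Retract1}.

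For the ``in particular'' clause, your first route (upgrade the non-linear isometric copy of $c_0$ to a linear one via Godefroy--Kalton, then apply the main statement) is exactly the paper's approach: the paper cites \cite[Corollary~3.3]{GK}, which is the same circle of ideas. Your second, self-contained route --- restrict to $A=\phi(c_0)$ via McShane, identify $\lip_0(A)$ with $\lip_0(c_0)$ via $\phi$, and then invoke Proposition~\ref{prop:lip0dualell1} --- is a genuine alternative that bypasses the Godefroy--Kalton upgrade theorem at the cost of working with the possibly non-convex set $A$. Both are fine; the paper's choice keeps everything linear throughout.
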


\begin{proof}
Since separable Banach spaces have the lifting property, see \cite[Theorem 3.1]{GK}, $E$ and hence also $F$ embeds linearly into $\mathcal{F}(E)$, by \cite[Theorem 2.12]{GK}. By transposition, we obtain a continuous linear surjection $\lip_0(E)\rightarrow \ell_1$.  
The second statement follows from \cite[Corollary 3.3]{GK} saying that a Banach space containing an isometric copy of $c_0$ also contains a linear subspace isometrically isomorphic to $c_0$.
\end{proof}

Recall that a metric space $M$ is an \emph{absolute Lipschitz retract} if $M$ is a Lipschitz retract of every metric space containing $M$. Equivalently, a metric space $M$ is an absolute Lipschitz retract if for all metric spaces $P\subseteq N$ and every Lipschitz mapping $f\colon P\rightarrow M$ there is a Lipschitz extension $F\colon N\rightarrow M$ of $f$. This characterization immediately implies that bilipschitz copies of absolute Lipschitz retracts are absolute Lipschitz retracts. Lindenstrauss \cite{Lindenstrauss} proved that Banach spaces $c_0$ and $C(K)$ for $K$ metric compact are absolute Lipschitz retracts. We refer the reader to \cite[Section 1.1]{BenyaminiLindenstrauss} for an introduction to absolute Lipschitz retracts.

The following fact is folklore.

\begin{proposition}\label{prop:Retract1}
  Let $N$ be a metric space and $M\subseteq N$. If $M$ is a Lipschitz retract of $N$, then the space $\lip_0(M)$ is isomorphic to a complemented subspace of $\lip_0(N)$.

  In particular, if $M$ is an absolute Lipschitz retract such that $\lip_0(M)$ admits a continuous operator onto $\ell_1$, then for every metric space $N$ containing a bilipschitz copy of $M$ the space $\lip_0(N)$ admits such an operator, too.
\end{proposition}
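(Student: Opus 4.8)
The plan is to prove the first, structural, assertion by writing down the two obvious operators relating $\lip_0(M)$ and $\lip_0(N)$ — the one induced by a Lipschitz retraction and the one induced by the inclusion — and checking that they witness $\lip_0(M)$ as a complemented subspace; the second assertion then follows by feeding into this the bilipschitz invariance of absolute Lipschitz retracts recorded above, together with the bilipschitz invariance of the spaces $\lip_0(\cdot)$.

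For the first assertion, I would fix a Lipschitz retraction $r\colon N\to M$ (a Lipschitz map with $r|_M=\mathrm{id}_M$) and, since the base point $e$ of $M$ belongs to $M\subseteq N$, take the same point $e$ as base point of $N$; then automatically $r(e)=e$. Define the composition operator $S\colon\lip_0(M)\to\lip_0(N)$ by $S(f)=f\circ r$ and the restriction operator $R\colon\lip_0(N)\to\lip_0(M)$ by $R(g)=g|_M$; both are linear, vanish at $e$, and are bounded, since $\lip(f\circ r)\le\lip(f)\lip(r)$ and $\lip(g|_M)\le\lip(g)$. The point is the identity $R\circ S=\mathrm{id}_{\lip_0(M)}$, which holds because $(f\circ r)|_M=f\circ(r|_M)=f$. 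From this I get, first, that $S$ is bounded below, hence an isomorphism onto its (consequently closed) range $Z:=S[\lip_0(M)]$, and, second, that $P:=S\circ R$ is an idempotent bounded operator on $\lip_0(N)$ with range $Z$; therefore $\lip_0(M)\cong Z$ is complemented in $\lip_0(N)$. (Equivalently, this is the adjoint of the easy fact that the inclusion $M\hookrightarrow N$ and $r$ induce maps $\mathcal F(M)\to\mathcal F(N)\to\mathcal F(M)$ whose composite is the identity, so that $\mathcal F(M)$ is $1$-complemented in $\mathcal F(N)$.)

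For the second assertion, suppose $M$ is an absolute Lipschitz retract and that $Q\colon\lip_0(M)\to\ell_1$ is a continuous linear surjection, and let $N$ contain a bilipschitz copy $M'$ of $M$. By the remark recalled above, $M'$ is again an absolute Lipschitz retract, hence a Lipschitz retract of $N$, so by the first part there are a bounded projection $P'$ on $\lip_0(N)$ and an isomorphism of $\lip_0(M')$ onto $Z':=P'[\lip_0(N)]$. Moreover a bilipschitz bijection $\varphi\colon M\to M'$ (with $\varphi(e)$ taken as base point of $M'$) induces, via $f\mapsto f\circ\varphi$, an isomorphism of $\lip_0(M')$ onto $\lip_0(M)$. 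Chaining these,
\[
\lip_0(N)\xrightarrow{\ P'\ }Z'\xrightarrow{\ \cong\ }\lip_0(M')\xrightarrow{\ f\mapsto f\circ\varphi\ }\lip_0(M)\xrightarrow{\ Q\ }\ell_1,
\]
and since every map in this chain is surjective, the composite is a continuous linear surjection of $\lip_0(N)$ onto $\ell_1$, as desired.

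I do not expect a genuine obstacle here: this is folklore and amounts to a short diagram chase. The only point deserving a line of care is the bookkeeping of base points — one must choose them so that $r$, the inclusion $M\hookrightarrow N$, and the bilipschitz bijection $\varphi$ all preserve the distinguished points — but this is harmless because $\lip_0(M)$ is, up to isometric isomorphism, independent of the choice of base point. If one prefers, the entire argument can alternatively be carried out one level down, on the Lipschitz-free spaces, using functoriality of $\mathcal F$ on pointed metric spaces with Lipschitz maps, and then dualised.
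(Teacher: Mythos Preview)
Your proof is correct and follows essentially the same approach as the paper: both define the composition-with-retraction operator $f\mapsto f\circ r$ and the restriction operator $g\mapsto g|_M$, and verify that their composite in one order is the identity on $\lip_0(M)$ and in the other order is an idempotent on $\lip_0(N)$. Your treatment is in fact slightly more careful than the paper's --- you handle base points explicitly and spell out the second assertion in detail (the paper leaves it implicit), and you do not invoke the McShane extension theorem, which the paper mentions to note that restriction is surjective but which is not actually needed for the complementation statement.
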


\begin{proof}
  Let $r\colon N\rightarrow M$ be a Lipschitz retraction. Consider the operators
  \[
    J\colon \lip_0(M) \rightarrow \lip_0(N), \quad f \mapsto f \circ r, \qquad\text{and}\qquad T\colon \lip_0(N)\rightarrow\lip_0(M), \quad g\mapsto g|_{M},
  \]
  and observe that for every $f\in\lip_0(M)$ and $g\in\lip_0(N)$ we have
  \[
    \|f\circ r\|_{\lip_0(N)} \le \lip(r)\cdot\|f\|_{\lip_0(M)}, \qquad \|f\circ r\|_{\lip_0(N)} \ge \|f\|_{\lip_0(M)},
  \]
  and $\|g|_M\|_{\lip_0(M)} \le \|g\|_{\lip_0(N)}$. In other words, both the operators are continuous and $J$ is an isomorphism onto its image. Also, by the McShane Extension Theorem, $T$ is onto $\lip_0(M)$. Moreover, $(JT)^2=JT$, and hence $JT$ is a projection onto the image of $J$. This shows that $\lip_0(M)$ is isomorphic to a complemented subspace of $\lip_0(N)$. 
\end{proof}

Having established the above observations and propositions, we are ready to prove the following theorem.

\begin{theorem}\label{thm:BiLipCK}
Let $E$ be a separable Banach space which is an absolute Lipschitz retract and contains an isomorphic copy of $c_0$ (e.g. $E$ is isomorphic to $C(K)$ for metric compact $K$). If a metric space $M$ contains a bilipschitz copy of the unit sphere $S_{E}$ of $E$, then the space $\lip_0(M)$ admits a continuous operator onto $\ell_1$.
\end{theorem}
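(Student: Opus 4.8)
The plan is to prove the statement first with $M$ replaced by the sphere $S_E$ itself, and then transfer it to $M$ by Proposition~\ref{prop:Retract1}. Concretely, I would establish two facts about $S_E$: \emph{(a)} $S_E$ is an absolute Lipschitz retract, and \emph{(b)} $\lip_0(S_E)$ admits a continuous operator onto $\ell_1$.

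For \emph{(a)}: since $E$ contains a copy of $c_0$ it is infinite-dimensional, so by the Benyamini--Sternfeld theorem (see \cite[Section~1.1]{BenyaminiLindenstrauss}) the ball $B_E$ Lipschitz-retracts onto $S_E$; composing such a retraction with the Lipschitz radial retraction $x\mapsto x/\max\{1,\|x\|\}$ of $E$ onto $B_E$ gives a Lipschitz retraction $r\colon E\to S_E$. A Lipschitz retract of an absolute Lipschitz retract is again one: given $P\subseteq N$ and a Lipschitz map $f\colon P\to S_E\subseteq E$, extend it to a Lipschitz map $\tilde f\colon N\to E$ (using that $E$ is an absolute Lipschitz retract); then $r\circ\tilde f\colon N\to S_E$ is a Lipschitz extension of $f$. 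Hence $S_E$ is an absolute Lipschitz retract.

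For \emph{(b)}: since $\mathcal{F}(S_E)^*=\lip_0(S_E)$, it suffices to produce an isomorphic copy of $c_0$ as a closed \emph{linear} subspace of $\mathcal{F}(S_E)$, because the adjoint of such an embedding is a continuous linear surjection of $\lip_0(S_E)$ onto $c_0^*=\ell_1$. First I would embed $B_{c_0}$ bilipschitzly into $S_E$. Choose a closed subspace $Y\subseteq E$ isomorphic to $c_0$ and properly contained in $E$ (possible since $c_0\simeq c_0\oplus c_0$), equip $Y$ with the norm $\|\cdot\|_0$ transported from $c_0$ so that $(Y,\|\cdot\|_0)$ is isometric to $c_0$ and $c^{-1}\|y\|_0\le\|y\|_E\le c\|y\|_0$ for some $c\ge1$ and all $y\in Y$, fix $e\in S_E\setminus Y$ (nonempty, as $Y$ is a proper subspace), a functional $\phi\in E^*$ with $\phi|_Y=0$ and $\phi(e)=1$, and $0<\varepsilon<1/(2c)$. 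Then
\[g\colon\{y\in Y:\|y\|_0\le1\}\to S_E,\qquad g(y)=\frac{e+\varepsilon y}{\|e+\varepsilon y\|_E},\]
is a bilipschitz embedding: one checks $\|e+\varepsilon y\|_E\in[\tfrac12,\tfrac32]$, and since $\phi(g(y))=\|e+\varepsilon y\|_E^{-1}$ the inverse of $g$ is the Lipschitz map $u\mapsto\bigl(\phi(u)^{-1}u-e\bigr)/\varepsilon$. The domain of $g$ is isometric to $B_{c_0}$, so (free spaces of bilipschitz-equivalent metric spaces are isomorphic and the free space of a subset embeds isometrically into the ambient one) $\mathcal{F}(B_{c_0})$ embeds isomorphically into $\mathcal{F}(S_E)$. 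By Kaufmann's theorem \cite{Kaufmann}, $\mathcal{F}(B_{c_0})\simeq\mathcal{F}(c_0)$, and by Godefroy--Kalton \cite[Theorems~2.12 and~3.1]{GK} the separable Banach space $c_0$ embeds linearly into $\mathcal{F}(c_0)$. Composing these three embeddings gives the required linear copy of $c_0$ in $\mathcal{F}(S_E)$, proving \emph{(b)}.

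With \emph{(a)} and \emph{(b)} in hand, the second part of Proposition~\ref{prop:Retract1} (applied to $S_E$ in the role of the absolute Lipschitz retract and to the given $M$ in the role of the ambient space) yields a continuous operator from $\lip_0(M)$ onto $\ell_1$. The main obstacle is step \emph{(b)}: one must upgrade the purely metric fact that $S_E$ contains a bilipschitz copy of $B_{c_0}$ into the existence of a genuine \emph{linear} copy of $c_0$ inside $\mathcal{F}(S_E)$, and this seems to require both Kaufmann's identification $\mathcal{F}(B_{c_0})\simeq\mathcal{F}(c_0)$ and the nontrivial linear self-embedding of a separable Banach space into its Lipschitz-free space; the naive idea of embedding $c_0$ itself into $S_E$ is impossible, $S_E$ being bounded.
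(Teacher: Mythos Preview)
Your argument is correct. Step \emph{(a)} coincides with the paper's: both retract $E$ onto $B_E$ radially, then $B_E$ onto $S_E$ via Benyamini--Sternfeld, and observe that a Lipschitz retract of an absolute Lipschitz retract is again one.

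Step \emph{(b)} is where you diverge from the paper. The paper invokes the Candido--Kaufmann theorem \cite{C-K}: since $c_0$ is separably injective and sits in $E$, one has $E\simeq E\oplus\mathbb{R}$, and then \cite{C-K} gives directly $\lip_0(S_E)\simeq\lip_0(E)$; Lemma~\ref{lemma:Lip0CKNotGP} (the Godefroy--Kalton lifting argument applied to $E$) then produces the surjection onto $\ell_1$. You instead build an explicit bilipschitz embedding of $B_{c_0}$ into $S_E$, pass to free spaces, and use Kaufmann's earlier identification $\mathcal{F}(B_{c_0})\simeq\mathcal{F}(c_0)$ together with the Godefroy--Kalton linear embedding $c_0\hookrightarrow\mathcal{F}(c_0)$ to plant a linear copy of $c_0$ inside $\mathcal{F}(S_E)$. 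Both routes ultimately rest on the lifting property of separable Banach spaces; the paper's is shorter and yields the stronger intermediate isomorphism $\lip_0(S_E)\simeq\lip_0(E)$, while yours is more self-contained in that it avoids the sphere result of \cite{C-K} and relies only on the older ball result of \cite{Kaufmann} plus an elementary geometric construction. One cosmetic point: when you pass from the bilipschitz copy of $B_{c_0}$ inside $S_E$ to an embedding of free spaces, make sure the base point of $S_E$ is chosen in (or adjoined to) the image of $g$; this is harmless since the isomorphism class of $\lip_0$ does not depend on the choice of base point.
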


\begin{proof}
Since  $c_0$ is separably injective, $E$ is isomorphic to $E\oplus\mathbb{R}$, so by the main result of ~\cite{C-K} the spaces $\lip_0(S_E)$ and $\lip_0(E)$ are isomorphic. Since $E$ contains an isomorphic copy of $c_0$, Lemma~\ref{lemma:Lip0CKNotGP} yields that there is a continuous linear surjection from $\lip_0(E)$ onto $\ell_1$ and hence from $\lip_0(S_E)$ onto $\ell_1$.

It is easy to see that the unit ball $B_E$ is a retract of $E$, so consequently, by~\cite{B-S}, the unit sphere $S_E$ is also a Lipschitz retract of $E$. Ultimately, as $E$ is an absolute Lipschitz retract, $S_E$ is an absolute Lipschitz retract, too. Now, the claim follows from Proposition~\ref{prop:Retract1} and the previous paragraph.
\end{proof}

\begin{corollary}\label{cor:bilipschitz_c0}
If a metric space $M$ contains a bilipschitz copy of the unit sphere $S_{c_0}$ of $c_0$, then $\lip_0(M)$ admits a continuous operator onto $\ell_1$.
\end{corollary}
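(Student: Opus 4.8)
The plan is to obtain this statement as the special case $E=c_0$ of Theorem~\ref{thm:BiLipCK}, so the only work is to check that $c_0$ satisfies the three hypotheses imposed there on $E$. First I would note that $c_0$ is separable, with the eventually-zero sequences having rational entries forming a countable dense set. Second, $c_0$ is an absolute Lipschitz retract: this is precisely Lindenstrauss's theorem recalled just before Proposition~\ref{prop:Retract1}. Third, $c_0$ trivially contains an isomorphic (indeed isometric) copy of $c_0$, namely itself, so the last hypothesis is automatic. Thus $E=c_0$ is a legitimate instance of the hypothesis of Theorem~\ref{thm:BiLipCK}.

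With these verifications in hand, Theorem~\ref{thm:BiLipCK} applied with $E=c_0$ says exactly that if a metric space $M$ contains a bilipschitz copy of $S_{c_0}$, then $\lip_0(M)$ admits a continuous operator onto $\ell_1$, which is the assertion. There is essentially no obstacle here beyond matching the hypotheses; the only point requiring a moment's care is that the phrase ``bilipschitz copy of the unit sphere $S_{c_0}$'' in the corollary is verbatim the condition ``contains a bilipschitz copy of the unit sphere $S_E$ of $E$'' with $E=c_0$, so no translation is needed.

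If one preferred a self-contained argument bypassing the statement of Theorem~\ref{thm:BiLipCK}, one could instead unwind its proof in this case: use that $c_0$ is separably injective, hence $c_0\simeq c_0\oplus\mathbb{R}$, to get $\lip_0(S_{c_0})\simeq\lip_0(c_0)$ via \cite{C-K}; apply Lemma~\ref{lemma:Lip0CKNotGP} with $E=F=c_0$ to produce a continuous linear surjection $\lip_0(c_0)\to\ell_1$; note via \cite{B-S} that $S_{c_0}$ is a Lipschitz retract of $c_0$ and hence, $c_0$ being an absolute Lipschitz retract, that $S_{c_0}$ is an absolute Lipschitz retract; and finally invoke Proposition~\ref{prop:Retract1} to transfer the surjection to $\lip_0(M)$. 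Either way, the argument is purely a matter of assembling already-established ingredients, and I expect no genuine difficulty.
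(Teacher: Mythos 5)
Your proposal is correct and matches the paper's (implicit) proof exactly: the corollary is stated as the special case $E=c_0$ of Theorem~\ref{thm:BiLipCK}, using that $c_0$ is separable, is an absolute Lipschitz retract by Lindenstrauss's theorem, and trivially contains an isomorphic copy of itself. Your optional unwinding of the theorem's proof is also faithful to the paper's argument, so there is nothing to add.
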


\begin{remark}
Note that in Theorem~\ref{thm:BiLipCK} we may require that $E$ contains an isomorphic copy of a predual of $\ell_1$, as every such predual contains a copy of $c_0$ (by Johnson and Zippin \cite{JZ73}). Note also that Benyamini and Lindenstrauss \cite{BL72} constructed a predual of $\ell_1$ which is not isomorphic to a complemented subspace of any space $C(K)$ (though by \cite[Theorem]{JZ73} it is isomorphic to a quotient of the space $C(2^\mathbb{N})$, where $2^\mathbb{N}$ denotes the Cantor space).
\end{remark}

\begin{corollary}\label{cor:l1inXnoGrothendieck}
  Let $E$ be a Banach space containing a complemented copy of $\ell_1$. Then, $\lip_0(E)$ contains a complemented copy of $\lip_0(\ell_1)$ and so, in particular, of $\ell_1$.
\end{corollary}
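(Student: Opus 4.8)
The plan is to realise $\ell_1$, up to linear isomorphism, as a linear subspace of $E$ that happens to be a base-point-preserving Lipschitz retract of $E$, to feed this into Proposition~\ref{prop:Retract1}, and then to combine the outcome with Dalet's theorem.

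First I would fix a linear subspace $Y\subseteq E$ isomorphic to $\ell_1$ together with a bounded linear projection $P\colon E\to Y$, and take $0$ as the common base point of $E$ and of $Y$. The single observation that does all the work is that $P$ is a Lipschitz retraction of $E$ onto $Y$ in the sense required by Proposition~\ref{prop:Retract1}: being a bounded linear operator, $P$ is Lipschitz with $\lip(P)\le\|P\|$; it sends the base point $0$ to the base point $0$; it maps $E$ onto $Y$; and $P|_Y$ is the identity of $Y$. Hence Proposition~\ref{prop:Retract1} applies and shows that $\lip_0(Y)$ is isomorphic to a complemented subspace of $\lip_0(E)$.

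Next I would transport this statement from $Y$ to $\ell_1$. A linear isomorphism $\varphi\colon\ell_1\to Y$ is a bilipschitz homeomorphism with $\varphi(0)=0$, so the composition operator $g\mapsto g\circ\varphi$ is a linear isomorphism of $\lip_0(Y)$ onto $\lip_0(\ell_1)$, since $\lip(g\circ\varphi)\le\lip(\varphi)\cdot\lip(g)$ and $\lip(g)=\lip\big((g\circ\varphi)\circ\varphi^{-1}\big)\le\lip(\varphi^{-1})\cdot\lip(g\circ\varphi)$. Combining this with the previous paragraph yields that $\lip_0(\ell_1)$ is isomorphic to a complemented subspace of $\lip_0(E)$, which is the first assertion of the corollary.

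Finally, for the ``in particular'' clause I would invoke Dalet's theorem \cite{Dalet2}, according to which $\lip_0(\ell_1)$ contains a complemented copy of $\ell_1$. Since being isomorphic to a complemented subspace is a transitive relation — composing the relevant projections and isomorphisms produces a projection onto the inner copy — it follows that $\lip_0(E)$ contains a complemented copy of $\ell_1$. I do not expect a genuine obstacle here: the statement is a short corollary, and the only point needing a moment's care is the verification that a bounded linear projection is a legitimate pointed Lipschitz retraction, so that Proposition~\ref{prop:Retract1} may be applied, together with the routine fact that $\lip_0(\cdot)$ is invariant, up to isomorphism, under base-point-preserving bilipschitz equivalences.
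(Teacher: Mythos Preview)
Your proof is correct and follows essentially the same approach as the paper: observe that a bounded linear projection is a Lipschitz retraction, apply Proposition~\ref{prop:Retract1}, and then invoke Dalet's result that $\lip_0(\ell_1)$ contains a complemented copy of $\ell_1$. The paper's proof is just a two-sentence compression of what you have written out in detail.
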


\begin{proof}
By the result of Dalet \cite[Proposition 79]{Dalet2}, the space $\lip_0(\ell_1)$ contains a complemented copy of $\ell_1$. Since a continuous linear projection is in particular a Lipschitz retraction, the conclusion follows from Proposition~\ref{prop:Retract1}.
\end{proof}

Using the above results, we now provide a list of several examples of Banach spaces $E$ such that $\lip_0(E)$ has a quotient isomorphic to $\ell_1$. Below by $\mathcal{L}(Y,Z)$ we denote the Banach space of all continuous operators from a normed space $Y$ into a Banach space $Z$, and by $\mbox{Bil}(Y\times W,Z)$ the Banach space of all continuous bilinear mappings from the product $Y\times W$ of Banach spaces $Y$ and $W$ into a Banach space $Z$.

\begin{theorem}\label{theorem:examples_onto_ell1}
Let $n\ge0$. If $E$ is an infinite-dimensional Banach space satisfying any of the following properties, then $\lip_0(E)$ admits a continuous  operator onto $\ell_1$:
\begin{enumerate}[itemsep=1mm]
	\item $E=C_0(Y)^{*(n)}$ for some locally compact space $Y$,
	\item $E=L_\infty(\Omega,\Sigma,\mu)^{*(n)}$ for some $\sigma$-finite measure space $(\Omega,\Sigma,\mu)$,
        \item $E$ is a predual space of the space $C(K)$ for some compact space $K$,
	\item $E=L_1(\Omega,\Sigma,\mu)$ for some $\sigma$-finite measure space $(\Omega,\Sigma,\mu)$,
	\item $E=\mathcal{L}(Y,Z)$ for some infinite-dimensional normed space $Y$ and some Banach space $Z$ containing a copy of $c_0$,
	\item {\normalfont$E=\mbox{Bil}(Y\times W,Z)$} for some infinite-dimensional Banach spaces $Y$, $W$, and $Z$ such that $Z$ contains a copy of $c_0$,
	\item $E=\lip_0(M)^{*(n)}$ for some metric space $M$,
	\item $E=\mathcal{F}(M)$ for some metric space $M$.
\end{enumerate}
\end{theorem}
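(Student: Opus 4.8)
The plan is to reduce each of the eight items to one of two facts already at our disposal: by Corollary~\ref{cor:l1inXnoGrothendieck}, if a Banach space $E$ contains a complemented copy of $\ell_1$, then $\lip_0(E)$ contains a complemented copy of $\ell_1$, and in particular admits an operator onto $\ell_1$; and by Proposition~\ref{prop:lip0dualell1} applied with $n=0$, if $E^*$ admits an operator onto $\ell_1$, then so does $\lip_0(E)$. From the second fact, together with the standard observations that a complemented copy of $\ell_\infty$ in $E$ produces a complemented copy of $\ell_\infty^*$ in $E^*$ (apply the adjoint of the projection) and that $\ell_\infty^*$ maps onto $\ell_1$ (restrict functionals to $c_0\subseteq\ell_\infty$), I first record the working principle that will be invoked repeatedly: \emph{if $E$ contains a complemented copy of $\ell_\infty$, then $\lip_0(E)$ admits an operator onto $\ell_1$.} The rest of the proof is an assembly of these two principles with classical Banach-space structure theory.

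The ``measure-theoretic'' items I would handle as follows. Every infinite-dimensional $L_1(\mu)$-space contains a $1$-complemented isometric copy of $\ell_1$: choose pairwise disjoint sets $A_n$ of positive finite measure and use the norm-one projection $f\mapsto\sum_n\bigl(\mu(A_n)^{-1}\int_{A_n}f\,d\mu\bigr)\mathbf 1_{A_n}$. Hence $(4)$ is immediate from Corollary~\ref{cor:l1inXnoGrothendieck}, and so is $(8)$, because by Corollary~\ref{hano_ell1} the space $\mathcal F(M)$ contains a complemented copy of $\ell_1(d(M))$ and hence of $\ell_1$. For $(1)$ I would use that the successive duals of $C_0(Y)$ alternate between $AL$-spaces and $AM$-spaces with unit (Kakutani's representation theorems): if $n$ is odd, $C_0(Y)^{*(n)}$ is an $AL$-space, hence isometric to an infinite-dimensional $L_1(\mu)$, and $(1)$ reduces to $(4)$; if $n$ is even, $\bigl(C_0(Y)^{*(n)}\bigr)^{*}=C_0(Y)^{*(n+1)}$ is such an $L_1(\mu)$, which maps onto $\ell_1$, so Proposition~\ref{prop:lip0dualell1} applies. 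Item $(2)$ is the special case of $(1)$ obtained by identifying the commutative $C^*$-algebra $L_\infty(\mu)$ with a $C_0(K)$-space. Finally, $(7)$ is read off the discussion preceding the theorem: $\lip_0(M)^{*(n)}$ contains a complemented copy of $\ell_1$ when $n$ is odd (use Corollary~\ref{cor:l1inXnoGrothendieck}) and a complemented copy of $\ell_\infty$ when $n$ is even (use the working principle).

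For $(3)$, the point is that a predual of a $C(K)$-space is essentially an $L_1$-space. Indeed $E^{*}\cong C(K)$ is a dual $C(K)$-space, so by Dixmier's theorem it is isometric to $L_\infty(\mu)$ for some localizable $\mu$; this commutative von Neumann algebra has a unique predual, namely $L_1(\mu)$, whence $E\cong L_1(\mu)$ and $(3)$ reduces to $(4)$. (If one prefers to avoid von Neumann algebras: $C(K)$ is an $\mathcal L_\infty$-space, so its predual $E$ is an $\mathcal L_1$-space, and every infinite-dimensional $\mathcal L_1$-space contains a complemented copy of $\ell_1$, so Corollary~\ref{cor:l1inXnoGrothendieck} applies.)

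The single item requiring a genuine construction is $(5)$; item $(6)$ then follows from it via the isometric identification $\mbox{Bil}(Y\times W,Z)\cong\mathcal L\bigl(Y,\mathcal L(W,Z)\bigr)$, since $\mathcal L(W,Z)$ again contains a copy of $c_0$ as soon as $Z$ does (by $(5)$ itself). For $(5)$ I would prove that \emph{$\mathcal L(Y,Z)$ contains a complemented copy of $\ell_\infty$} and then apply the working principle. Here is where the Josefson--Nissenzweig theorem enters, and this is the crux of the whole theorem: fix a \emph{normalized weak*-null} sequence $(y_n^{*})$ in $Y^{*}$ (its existence is precisely Josefson--Nissenzweig) and an isomorphic copy $(z_n)$ of the unit vector basis of $c_0$ inside $Z$; for $a=(a_n)\in\ell_\infty$, weak*-nullity makes $\bigl(a_n y_n^{*}(y)\bigr)_n$ a null sequence, so $S_a(y):=\sum_n a_n y_n^{*}(y)z_n$ defines an operator $S_a\in\mathcal L(Y,Z)$, and a routine estimate yields $c\,\|a\|_\infty\le\|S_a\|\le C\,\|a\|_\infty$ for suitable constants, so $a\mapsto S_a$ is an isomorphic embedding of $\ell_\infty$. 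Taking $z_n^{*}\in Z^{*}$ biorthogonal to $(z_n)$ and $y_n\in B_Y$ with $y_n^{*}(y_n)=1$, the functionals $\Phi_n(T):=z_n^{*}(Ty_n)$ satisfy $\Phi_n(S_m)=\delta_{nm}$ and are uniformly bounded, so $T\mapsto\sum_n\Phi_n(T)S_n$ is a bounded linear projection of $\mathcal L(Y,Z)$ onto the copy of $\ell_\infty$. This finishes $(5)$, hence $(6)$, and the theorem. The hard part is thus concentrated in $(5)$: identifying Josefson--Nissenzweig as the tool that makes an $\ell_\infty$-indexed family of operators converge, and checking that the family sits complementably in $\mathcal L(Y,Z)$; the other items are routine once the two reduction principles are in place.
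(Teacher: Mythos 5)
Your proof is correct, but it takes a genuinely different route from the paper's for roughly half of the items. The paper treats every case in which $E$ merely contains a copy of $c_0$ --- (1) for even $n$, (5), (6), (7) for even $n$ --- by passing to a bilipschitz copy of the sphere $S_{c_0}$ and invoking Corollary~\ref{cor:bilipschitz_c0} (for (5) it simply cites Ferrando's theorem that $\mathcal{L}(Y,Z)$ contains a copy of $c_0$), whereas you bypass Corollary~\ref{cor:bilipschitz_c0} entirely and push everything through duality and Proposition~\ref{prop:lip0dualell1}: for (1)--(2) via the Kakutani alternation of AL- and AM-structures on the successive duals of $C_0(Y)$, and for (5)--(6) via a hands-on Josefson--Nissenzweig construction of a complemented copy of $\ell_\infty$ in $\mathcal{L}(Y,Z)$, which in effect re-proves the Ferrando-type embedding the paper only cites. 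For (3) the paper is lighter: a copy of $c_0$ in the dual $E^*\simeq C(K)$ already forces a complemented copy of $\ell_1$ in $E$ by Rosenthal's result, while you invoke Dixmier plus Sakai's uniqueness of preduals (or $\mathcal{L}_1$-space duality); for (4), (6) and (8) your reductions (a direct $\ell_1$-complementation in $L_1(\mu)$, the identification $\mbox{Bil}(Y\times W,Z)\cong\mathcal{L}(Y,\mathcal{L}(W,Z))$ instead of $\mathcal{L}(Y\widehat{\otimes}_\pi W,Z)$, and Corollary~\ref{hano_ell1} with Corollary~\ref{cor:l1inXnoGrothendieck}) are equivalent to the paper's. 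What your approach buys is self-containedness and the observation that, since each $E$ on the list is a Banach space, the sphere machinery of Theorem~\ref{thm:BiLipCK} is dispensable here; in fact it can be streamlined further, because a copy of $c_0$ in $E$ already yields a continuous linear surjection $E^*\rightarrow\ell_1$ by restricting functionals to that copy, after which Proposition~\ref{prop:lip0dualell1} finishes --- making both your complemented-$\ell_\infty$ working principle and the Josefson--Nissenzweig construction avoidable. Two harmless imprecisions, not gaps: in (5) the functionals $y_n^*$ need not attain their norms, so either renormalize (take $y_n^*(y_n)=1$ with $\|y_n\|\le2$, which still gives uniformly bounded $\Phi_n$ and $\Phi_n(S_a)=a_n$) or simply note that any isomorphic copy of $\ell_\infty$ is automatically complemented by injectivity; and ``reducing (1) to (4)'' overlooks that (4) was stated for $\sigma$-finite measures, though your $\ell_1$-complementation argument works verbatim in any infinite-dimensional AL-space.
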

\begin{proof}
(1) Recall that the space $C_0(Y)$ contains a copy of $c_0$, the dual space $C_0(Y)^*$ contains a complemented copy of $\ell_1$ and hence that the bidual space $C_0(Y)^{**}$ contains a complemented copy of $\ell_\infty$. It follows that, for each odd $m\in\mathbb{N}$, the space $C_0(Y)^{*(m)}$ contains a complemented copy of $\ell_1$ and the space $C_0(Y)^{*(m+1)}$ contains a complemented copy of $\ell_\infty$ and so a copy of $c_0$. Consequently, if $n$ is even, we use Corollary~\ref{cor:bilipschitz_c0}, and if $n$ is odd, we appeal to Corollary~\ref{cor:l1inXnoGrothendieck}.

(2) follows from \cite[Example 4.2.9]{AK} and (1).

(3) Since $C(K)$ contains a copy of $c_0$ and is a dual space, by \cite[Corollary 1.2]{Rosenthal} $E$ contains a complemented copy of $\ell_1$ and so we use Corollary~\ref{cor:l1inXnoGrothendieck}.

(4) follows from \cite[Example 4.2.9]{AK} and (3).

(5) By \cite[Theorem 1]{FerrandoLYZ}, $E$ contains a copy of $c_0$, so we use Corollary~\ref{cor:bilipschitz_c0}.

(6) Note that $E$ is isometrically isomorphic to $\mathcal{L}(Y\widehat{\otimes}_\pi W,Z)$, the space of all continuous  operators from the projective tensor product of $Y$ and $W$ into the space $Z$, and use (5).

(7) If $E=\lip_0(M)^{*(n)}$ and $n$ is even, then, by the discussion after Corollary \ref{separable}, $E$ contains a copy of $\ell_\infty$ and so, in particular, a copy of $c_0$; we then use Corollary~\ref{cor:bilipschitz_c0}. If $n$ is odd, then, similarly, $E$ contains a complemented copy of $\ell_1$, so we use Corollary~\ref{cor:l1inXnoGrothendieck}.

(8) follows from Corollaries~\ref{hano_ell1} and \ref{cor:l1inXnoGrothendieck}.
\end{proof}

Even though in this paper we work exclusively with real Banach spaces, the arguments presented in the proof of Theorem~\ref{theorem:examples_onto_ell1} can be generalized to the class of \emph{complex} C*-algebras. However, while in the below proof we use the fact that the C*-algebras are complex to obtain either $c_0$ or $\ell_1$ as a (complemented) subspace of $E$,
for the space $\lip_0(E)$ (consisting of real-valued functions), we consider $E$ as a real Banach space by forgetting its additional complex structure. In fact, in the cases where we exhibit a copy of $c_0$ in $E$, taking into account Corollary~\ref{cor:bilipschitz_c0}, we use only that $E$ is a metric space containing a bilipschitz copy (of the unit sphere) of $c_0$. Note that the following result in particular applies to the C*-algebras $X=\mathcal{B}(H)$ of all bounded operators on
infinite-dimensional Hilbert spaces $H$.

\begin{theorem}\label{theorem:cstaralgebras}
Let $n\ge0$. Let $E$ be an infinite-dimensional complex Banach space. If $E=X^{*(n)}$ for some C*-algebra $X$ or $E$ is a predual of a C*-algebra, then $\lip_0(E)$ admits a continuous  operator onto $\ell_1$.
\end{theorem}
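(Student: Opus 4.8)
The plan is to follow the template of the proof of Theorem~\ref{theorem:examples_onto_ell1}: in each case I will exhibit inside $E$ either an isometric copy of $c_0$, so that Corollary~\ref{cor:bilipschitz_c0} applies, or a complemented copy of $\ell_1$, so that Corollary~\ref{cor:l1inXnoGrothendieck} applies. As the remark preceding the theorem indicates, the complex structure of the C*-algebra is to be used only to locate these copies; afterwards one forgets it and regards $E$ as a real Banach space, which changes nothing about $\lip_0(E)$ (this depends only on the metric of $E$), keeps a complex-linearly complemented subspace complemented, and merely replaces a complex $c_0$, resp.\ complex $\ell_1$, by a Banach space isomorphic to $c_0$, resp.\ $\ell_1$. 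First I would note that in both cases of the hypothesis the relevant C*-algebra---namely $X$, resp.\ $E^{*}$---is infinite-dimensional, since otherwise $E$ would be finite-dimensional. I will then invoke two standard operator-algebraic facts: (a) every infinite-dimensional C*-algebra contains an isometric copy of $c_0$, because it contains a self-adjoint element with infinite spectrum and hence, via the continuous functional calculus, a sequence $(e_n)_{n}$ of pairwise orthogonal positive norm-one elements, for which $\big\|\sum_{n} a_n e_n\big\|=\max_{n}|a_n|$ by orthogonality; and (b) the unique isometric predual of every infinite-dimensional von Neumann algebra $\mathcal{M}$ contains a complemented copy of $\ell_1$, which I would see by picking pairwise orthogonal nonzero projections $(p_n)_{n}$ in $\mathcal{M}$ together with normal states $\omega_n$ with $\omega_n(p_n)=1$ (so $\omega_n(x)=\omega_n(p_nxp_n)$ and therefore $\omega_n(p_m)=\delta_{nm}$), noting that $(a_n)\mapsto\sum_n a_n\omega_n$ embeds $\ell_1$ into $\mathcal{M}_{*}$, that $\varphi\mapsto(\varphi(p_n))_n$ is a bounded left inverse (since $\sum_{n\in F}|\varphi(p_n)|=\big|\varphi\big(\sum_{n\in F}\varepsilon_n p_n\big)\big|\le\|\varphi\|$ for every finite $F$ and suitable signs $\varepsilon_n$), and hence that their composition is a bounded projection onto a copy of $\ell_1$.

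With these in hand, I would split into cases. If $E=X^{*(n)}$ with $n$ even, then, since the bidual of a C*-algebra is again a C*-algebra (in fact a von Neumann algebra), $X^{*(n)}$ is an infinite-dimensional C*-algebra---it is $X$ for $n=0$ and a von Neumann algebra for $n\ge2$---so by (a) it contains an isometric copy of $c_0$; therefore, as a metric space, $E$ contains a bilipschitz copy of the unit sphere $S_{c_0}$, and Corollary~\ref{cor:bilipschitz_c0} produces a continuous operator from $\lip_0(E)$ onto $\ell_1$.

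If instead $E=X^{*(n)}$ with $n$ odd, or $E$ is a predual of a C*-algebra $X$, I would reduce both to a single situation. In the first subcase $X^{*(n+1)}=\big(X^{*(n-1)}\big)^{**}$ is an infinite-dimensional von Neumann algebra, and since $\big(X^{*(n)}\big)^{*}=X^{*(n+1)}$ and von Neumann algebras have a unique isometric predual, $E=X^{*(n)}$ is that predual. In the second subcase $X=E^{*}$ is a C*-algebra that is isometrically a dual space, hence a W*-algebra by Sakai's theorem, it is infinite-dimensional, and again by uniqueness of the predual $E\cong X_{*}$. Thus in both subcases $E$ is the predual of an infinite-dimensional von Neumann algebra, so by (b) it contains a complemented copy of $\ell_1$; Corollary~\ref{cor:l1inXnoGrothendieck} then shows $\lip_0(E)$ contains a complemented copy of $\ell_1$, and the associated projection is a continuous operator of $\lip_0(E)$ onto $\ell_1$.

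The work here is essentially all in the operator-algebraic preliminaries, which are classical but need to be marshalled correctly: that an infinite-dimensional C*-algebra carries a self-adjoint element with infinite spectrum (equivalently, an infinite-dimensional commutative C*-subalgebra); that the bidual of a C*-algebra carries a von Neumann algebra structure extending the given one; that von Neumann algebras have a unique isometric predual; and Sakai's characterisation of W*-algebras among C*-algebras. The main thing to be careful about is the bookkeeping of which iterated duals $X^{*(n)}$ are themselves C*-algebras and which are preduals of von Neumann algebras, together with the (routine) passage from the complex Banach space $E$ to its underlying real Banach space so that the two corollaries of the previous section become applicable.
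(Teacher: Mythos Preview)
Your proof is correct and follows the same overall plan as the paper: in each case exhibit either a copy of $c_0$ inside $E$ and invoke Corollary~\ref{cor:bilipschitz_c0}, or a complemented copy of $\ell_1$ and invoke Corollary~\ref{cor:l1inXnoGrothendieck}. For locating $c_0$ in a C*-algebra the two arguments are essentially equivalent: the paper passes to an infinite-dimensional maximal abelian $*$-subalgebra and applies Gelfand--Naimark, while you take a self-adjoint element with infinite spectrum and use the functional calculus---both produce an infinite-dimensional commutative C*-subalgebra and hence $c_0$.

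The one substantive divergence is in the odd-$n$ and predual cases. You invoke Sakai's theorem and the uniqueness of the predual of a von Neumann algebra to identify $E$ with $\mathcal{M}_*$, and then construct the complemented copy of $\ell_1$ by hand from orthogonal projections and normal states. The paper takes a shorter route: it simply observes that $E^*$ is an infinite-dimensional C*-algebra, hence contains $c_0$ by the first step, and then applies Rosenthal's \cite[Corollary~1.2]{Rosenthal} (already used elsewhere in the paper) to conclude that $E$ contains a complemented copy of $\ell_1$. The paper's path avoids the heavier operator-algebraic machinery (Sakai's theorem, uniqueness of preduals, existence of sufficiently many normal states) at the cost of relying on the black box of Rosenthal's result; your path is more self-contained on the Banach-space side and makes the complemented $\ell_1$ explicit.
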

\begin{proof}
Assume first that $E$ is an infinite-dimensional C*-algebra. Then, it is well-known that $E$ contains an infinite-dimensional maximal abelian subalgebra $M$ (see \cite[Exercise 4.6.12]{KR83}). By the classical Gelfand--Naimark theorem $M$ is isomorphic to the complex space $C_0(X)$ for some infinite locally compact space $X$. Consequently, $M$ contains a copy of the real Banach space $c_0$ and so $E$ contains such a copy, too. By Corollary~\ref{cor:bilipschitz_c0} we get a continuous operator from $\lip_0(E)$ onto $\ell_1$.

Assume that $E=X^{*(n)}$ for some infinite-dimensional C*-algebra $X$ and $n>0$. If $n$ is even, then $E$ carries a compatible C*-algebraic structure (see \cite[Section 1.17]{Sak71}) and so, by the previous argument, $E$ contains a copy of the real Banach space $c_0$. If $n$ is odd, then similarly the complex Banach space $c_0$ can be embedded into complex $X^{*(n+1)}=E^*$ and hence, by \cite[Corollary 1.2]{Rosenthal}, the complex Banach space $E$ contains a complemented copy of the complex Banach space $\ell_1$. Consequently, the real Banach space $E$ contains a complemented copy of the real Banach space $\ell_1$. In either case, referring to Corollary~\ref{cor:bilipschitz_c0} or to Corollary~\ref{cor:l1inXnoGrothendieck}, we get a continuous operator from $\lip_0(E)$ onto $\ell_1$.

If $E$ is a predual of a C*-algebra, then we proceed similarly as in the previous paragraph for odd $n$ to show that $E$ contains a complemented copy of $\ell_1$ and we use Corollary~\ref{cor:l1inXnoGrothendieck}.
\end{proof}

Recall that for a metric space $M$ a function $f\colon M\rightarrow\mathbb{R}$ is called \emph{locally flat} if for every $x\in M$ and every $\varepsilon >0$ there is $\delta>0$ such that $|f(y)-f(z)| \le \varepsilon \rho(y,z)$ for all points~$y,z\in B(x,\delta)$. For a compact metric space $M$ with the base point, the space~$\mathrm{lip}_0(M)$ is defined as the subspace of $\lip_0(M)$ consisting of all locally flat functions. For non-compact metric spaces $M$ additional flatness assumptions are required in the definition of $\mathrm{lip}_0(M)$, see~\cite[Definition~4.15]{Weaver}. The space $\mathrm{lip}_0(M)$ is said to \emph{separate points uniformly} if there is a constant $a\in(0,1]$ such that for all $x,y\in M$ there is a function $f\in\mathrm{lip}_0(M)$ with $\|f\|_{\lip_0(M)}\le 1$ and $|f(x)-f(y)|=a\rho(x,y)$.  The class of all metric spaces $M$ for which the spaces $\mathrm{lip}_0(M)$ separate points uniformly contains e.g. the Cantor space $2^\mathbb{N}$ and all countable compact spaces, see \cite[Section 4.1]{Weaver}.

By \cite[Theorem~4.38]{Weaver}, if a metric space $M$ is infinite and the space $\mathrm{lip}_0(M)$ separates points uniformly, then the dual space of $\mathrm{lip}_0(M)$ is isometrically isomorphic to $\mathcal{F}(M)$, and so, since $\mathcal{F}(M)$ contains a complemented copy of $\ell_1$, by Proposition~\ref{prop:lip0dualell1}, $\lip_0(\mathrm{lip}_0(M))$ admits a continuous operator onto $\ell_1$.

\begin{proposition}
For an infinite metric space $M$, if the space $\mathrm{lip}_0(M)$ separates points uniformly, then there is a continuous linear surjection $T\colon \lip_0(\mathrm{lip}_0(M))\to\ell_1$.
\end{proposition}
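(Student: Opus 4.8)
The plan is to read the statement as a direct instance of Proposition~\ref{prop:lip0dualell1} at $n=0$, once we have recognised $\mathrm{lip}_0(M)$ as a suitable predual. So the proof assembles two ingredients already available: Weaver's duality theorem and the Hájek--Novotný complementation result.

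First I would set $E:=\mathrm{lip}_0(M)$ and invoke \cite[Theorem~4.38]{Weaver}: since $M$ is infinite and the space $\mathrm{lip}_0(M)$ separates points uniformly, the dual Banach space $\mathrm{lip}_0(M)^*$ is isometrically isomorphic to the Lipschitz-free space $\mathcal{F}(M)$. In other words $E$ is a predual of $\mathcal{F}(M)$, that is, $E^*=\mathcal{F}(M)$.

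Next I would note that, by Corollary~\ref{hano_ell1}, the space $\mathcal{F}(M)$ of the infinite metric space $M$ contains a complemented copy of $\ell_1(d(M))$, hence in particular a complemented copy of $\ell_1$; composing the bounded projection onto this copy with the isomorphism identifying it with $\ell_1$ produces a continuous linear surjection $\mathcal{F}(M)\to\ell_1$. Therefore the dual space $E^*=\mathcal{F}(M)$ admits a continuous operator onto $\ell_1$, which is exactly the hypothesis of Proposition~\ref{prop:lip0dualell1}. Applying that proposition with this $E$ and $n=0$, and using $E^{*(0)}=E$, we conclude that $\lip_0(\mathrm{lip}_0(M))=\lip_0(E^{*(0)})$ admits a continuous operator onto $\ell_1$, as claimed.

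I do not expect any genuine obstacle: the whole argument is a bookkeeping composition of cited results. The only points that need a word of care are checking that the hypotheses of \cite[Theorem~4.38]{Weaver} coincide precisely with the standing assumptions ($M$ infinite, $\mathrm{lip}_0(M)$ separating points uniformly), and recording that the "continuous operator onto $\ell_1$'' delivered by Proposition~\ref{prop:lip0dualell1} is, by the paper's terminology, exactly a continuous linear surjection $T\colon\lip_0(\mathrm{lip}_0(M))\to\ell_1$.
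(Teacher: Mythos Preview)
Your proof is correct and essentially identical to the paper's own argument: the paper also sets $E=\mathrm{lip}_0(M)$, invokes \cite[Theorem~4.38]{Weaver} to identify $E^*$ with $\mathcal{F}(M)$, notes that $\mathcal{F}(M)$ contains a complemented copy of $\ell_1$, and then applies Proposition~\ref{prop:lip0dualell1}. There is nothing to add.
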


Recall that a subspace $\mathcal{N}$ of a metric space $M$ is a \emph{net} if there are $\varepsilon,\delta>0$ such that $\rho(x,y)\ge\varepsilon$ for every $x\neq y\in\mathcal{N}$ and for every $x\in M$ there is $y\in\mathcal{N}$ with $\rho(x,y)<\delta$. Candido, C\'{u}th, and Doucha \cite[Remark 3.6]{C-C-D} observed that the space $\lip_0(\mathcal{N})$, where $\mathcal{N}$ is a net in either $c_0$ or $\ell_1$, admits a continuous operator onto~$\ell_1$ (cf. Remark \ref{rem:ccd}). Of course, no such net $\mathcal{N}$ can contain any bilipschitz copy of the unit sphere of an infinite-dimensional Banach space. The next proposition shows that also for some infinite-dimensional connected complete metric spaces $M$ the existence of the unit sphere of an infinite-dimensional normed space is not necessary for the space $\lip_0(M)$ to admit a continuous operator onto $\ell_1$. Recall that a topological space $X$ is \emph{arcwise-connected} if every two distinct points of $X$ can be connected by a homeomorphic image of the unit interval $[0,1]$.

\begin{proposition}\label{prop:NoGrothendieckNoc0Ball}
  There is an arcwise-connected $\sigma$-compact complete metric space $M$ of infinite covering dimension which does not contain any bilipschitz copy of the unit sphere of an infinite-dimensional normed space but $\lip_0(M)$ still admits a continuous operator onto $\ell_1$.
\end{proposition}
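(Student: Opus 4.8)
The plan is to exhibit $M$ as an explicit closed subset of $c_{0}$. Fix a partition $\mathbb{N}=O\sqcup\bigsqcup_{n\ge1}B_{n}$ with $O$ infinite and $|B_{n}|=n$, and identify $c_{0}(O)$ with $c_{0}$. Let $M_{1}\subseteq c_{00}(O)\subseteq c_{0}$ be the $1$-skeleton of the integer lattice supported in $O$, that is, the union of all segments $[p,p+e_{i}]$ with $p$ a finitely supported integer point and $i\in O$; equivalently $M_{1}$ is the set of finitely supported points of $c_{0}$, supported in $O$, having at most one non-integer coordinate. For each $n\ge1$ let $Q_{n}=[0,2^{-n}]^{B_{n}}\subseteq c_{0}$, a small $n$-dimensional cube living in the block $B_{n}$; note $0\in Q_{n}$ and $\mathrm{diam}\,Q_{n}\le 2^{-n}$. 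Put $M=M_{1}\cup\bigcup_{n\ge1}Q_{n}$, with the metric inherited from $c_{0}$ and base point $e=0$. Then $M$ is arcwise-connected (the graph $M_{1}$ is arcwise-connected, each $Q_{n}$ is convex, and all pieces contain $0$); it is $\sigma$-compact (a countable union of the compact edges and vertices of $M_{1}$ and the compact cubes $Q_{n}$); it has infinite covering dimension (it contains $Q_{n}\cong[0,1]^{n}$ for all $n$); and it is closed in $c_{0}$, hence complete. For the last point one checks first that $M_{1}$ is closed in $c_{0}$ — along a Cauchy sequence the integer coordinates stabilise — and then that a Cauchy sequence in $M$ either has infinitely many terms in $M_{1}$, or eventually lies in $\bigcup_{n}Q_{n}$, where it either stays in a single compact $Q_{n}$ or has indices tending to infinity and hence tends to $0$ (as $\|x\|_{\infty}\le2^{-n}$ on $Q_{n}$).

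Next I would produce the continuous linear surjection $\lip_{0}(M)\to\ell_{1}$. The point is that the vertex set $\mathcal{N}$ of $M_{1}$ — the finitely supported integer points supported in $O$ — with the metric inherited from $c_{0}$ (which is the $\ell_{\infty}$-metric) is a net in $c_{0}\cong c_{0}(O)$: it is $1$-separated and every element of $c_{0}$ lies within $\tfrac12$ of its coordinatewise rounding, which is finitely supported. By the McShane Extension Theorem the restriction map $R\colon\lip_{0}(M)\to\lip_{0}(\mathcal{N})$, $f\mapsto f|_{\mathcal{N}}$, is a continuous linear surjection (each $g\in\lip_{0}(\mathcal{N})$ extends to some $\tilde g\in\lip_{0}(M)$ with the same Lipschitz constant and $\tilde g(e)=0$). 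By \cite[Remark 3.6]{C-C-D} there is a continuous linear surjection $S\colon\lip_{0}(\mathcal{N})\to\ell_{1}$, and $S\circ R\colon\lip_{0}(M)\to\ell_{1}$ is as required.

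The hard part, and the main obstacle, is to show that $M$ contains no bilipschitz copy of the unit sphere $S_{E}$ of an infinite-dimensional normed space $E$. Suppose $\varphi\colon S_{E}\to M$ were bilipschitz with constant $L$. For each $n$ pick an $n$-dimensional subspace $F\le E$; then $S_{F}\subseteq S_{E}$ is homeomorphic to $S^{n-1}$ and has diameter $2$, so $C_{n}:=\varphi(S_{F})$ is a compact connected topological $(n-1)$-manifold in $M$ with $\mathrm{diam}\,C_{n}\ge2/L$. The key local fact is that $\partial_{M}Q_{m}=\{0\}$ for each $m$: if $x\in Q_{m}\setminus\{0\}$ then $x_{i}\ne0$ for some $i\in B_{m}$, while every point of $M\setminus Q_{m}$ vanishes on $B_{m}$, so $B(x,|x_{i}|)\cap M\subseteq Q_{m}$; thus $Q_{m}\setminus\{0\}=\mathrm{int}_{M}Q_{m}$ is clopen in $M\setminus\{0\}$. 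Fix $m^{*}$ with $2^{-m}<2/L$ for $m>m^{*}$, so $C_{n}\not\subseteq Q_{m}$ for $m>m^{*}$. If $0\notin C_{n}$, then for $m>m^{*}$ the connected set $C_{n}$ meets neither $\partial_{M}Q_{m}=\{0\}$ nor $M\setminus Q_{m}$ only — being not contained in $Q_{m}$, it cannot meet $\mathrm{int}_{M}Q_{m}=Q_{m}\setminus\{0\}$ at all; hence $C_{n}\subseteq M_{1}\cup Q_{1}\cup\dots\cup Q_{m^{*}}$, a space of covering dimension at most $\max\{1,m^{*}\}$, contradicting $\dim C_{n}=n-1$ once $n$ is large. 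If $0\in C_{n}$, use $n-1\ge2$ to choose a connected neighbourhood $V$ of $0$ in the manifold $C_{n}$ with $V\setminus\{0\}$ connected; then $V\setminus\{0\}$ is a connected subset of $M\setminus\{0\}$ of covering dimension $n-1\ge2$, hence lies in a single clopen component of $M\setminus\{0\}$, which cannot be contained in $M_{1}$ and so must be some $Q_{j}\setminus\{0\}$ with $j\ge n-1$; therefore $V\subseteq Q_{j}$, so $C_{n}\cap Q_{j}$ is nonempty and clopen in the connected set $C_{n}$, forcing $C_{n}\subseteq Q_{j}$ — impossible since $j\ge n-1>m^{*}$.

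In summary, the soft properties of $M$ and the surjection onto $\ell_{1}$ are routine once the construction is set up, the only genuine subtlety there being the verification that $M_{1}$ (hence $M$) is closed in $c_{0}$; the technical heart of the argument is the last step, where one combines the countable-sum and subspace theorems for covering dimension with the precise component structure of $M\setminus\{0\}$ and the local analysis $\partial_{M}Q_{m}=\{0\}$ to rule out, for large $n$, any bilipschitz image of $S^{n-1}$, and therefore any bilipschitz image of the unit sphere of an infinite-dimensional normed space.
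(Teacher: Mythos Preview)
Your argument is correct, but it takes a genuinely different route from the paper. The paper's example is the abstract wedge $M=\bigsqcup_{n}\ell_\infty^n$ with all base points identified and the ``sum'' metric $\rho(x,y)=\|x\|_\infty+\|y\|_\infty$ for $x,y$ in different pieces. The no-sphere part is then a one-liner: removing the wedge point $e$ breaks $M$ into the connected pieces $\ell_\infty^n\setminus\{0\}$, while the unit sphere of an infinite-dimensional normed space has no cut point, so any homeomorphic (in particular bilipschitz) copy must sit inside a single $\ell_\infty^n$, which is impossible by dimension. The surjection onto $\ell_1$ comes from the isomorphism $\lip_0(M)\simeq\big(\bigoplus_n\lip_0(\ell_\infty^n)\big)_\infty\simeq\lip_0(c_0)$ of \cite[Remark~3.2]{C-C-D} and Proposition~\ref{prop:lip0dualell1}.

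Your construction, by contrast, realises $M$ isometrically inside $c_0$ as the $1$-skeleton $M_1$ of the integer lattice together with shrinking cubes $Q_n$ glued at the origin. This buys you an extra feature---$M$ is a concrete subset of a classical Banach space with the induced metric---and gives a different mechanism for the surjection (restriction to the integer lattice, which is a net in $c_0$, and then \cite[Remark~3.6]{C-C-D}). The price is that the no-sphere argument becomes noticeably heavier: since you attach the $1$-dimensional graph $M_1$ at the origin and the cubes have bounded diameter, you need the diameter estimate $\operatorname{diam}(C_n)\ge 2/L$ together with the clopen decomposition of $M\setminus\{0\}$ and the local analysis $\partial_M Q_m=\{0\}$, plus the manifold fact that $S^{n-1}\setminus\{\text{pt}\}$ is connected for $n\ge 3$, to force $C_n$ into a single small cube and reach a contradiction. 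The paper avoids all of this by letting the pieces be unbounded and using only the single cut point.
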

\begin{proof}
  We consider the metric space $M=\bigsqcup_{n\in\mathbb{N}} \ell_\infty^n$ which is defined in accordance with \cite[Definition 1.13]{Weaver} as follows: it is the quotient space of the disjoint union $\bigsqcup_{n\in\mathbb{N}}\ell_\infty^n$ with all the base points of the spaces $\ell_\infty^n$ identified as the new single base point, and for points $x\in \ell_\infty^n$ and $y\in\ell_\infty^{m}$ their distance is defined as
  \[
    \rho(x,y) =\begin{cases}
	 \|x\|_\infty + \|y\|_\infty&,\text{ if }n\neq m,\\
	 \|x-y\|_\infty&,\text{ if }n=m.
	\end{cases}
  \]
  
  Note that $M$ is an arcwise-connected $\sigma$-compact complete metric space. Since for each $n\in\mathbb{N}$ the space $M$ contains $\ell_\infty^n$ as a closed subset, it has infinite covering dimension, see e.g. \cite[Theorem~3.1.4]{Engelking}.
  
  We claim that $M$ contains no homeomorphic copy of the unit sphere of an infinite-dimensional normed space. To see this, note first that the sphere $S$ of an infinite-dimensional normed space has infinite covering dimension and does not contain any point $x$ such that $S\setminus\{x\}$ is disconnected, so every homeomorphic copy of $S$ would have to be contained in one of the $n$-dimensional pieces $\ell_\infty^n$ of $M$. Since homeomorphisms preserve the covering dimension, see e.g. \cite[p.~54]{Engelking}, this would result in a closed infinite-dimensional subset of $\ell_\infty^n$ for some $n$, which is impossible.
  
  By \cite[Proposition 2.8]{Weaver}, we have the isomorphism
  \[
    \lip_0(M) \simeq \Big(\bigoplus_{n\in\mathbb{N}} \lip_0(\ell_\infty^n)\Big)_\infty.
  \]
  The space on the right hand side is isomorphic to $\lip_0(c_0)$ by \cite[Remark 3.2]{C-C-D}, so, by Proposition~\ref{prop:lip0dualell1}, $\lip_0(M)$ admits a continuous operator onto $\ell_1$.
\end{proof}

\subsection{The Grothendieck property of spaces $\lip_0(M)$\label{sec:grothendieck}}

Using results obtained in the first half of Section \ref{sec:ell_1}, we will now provide several sufficient conditions for metric spaces $M$ implying that $\lip_0(M)$ is not a Grothendieck space. Recall that a Banach space $X$ is \emph{Grothendieck} (or has the \emph{Grothendieck property}) if every weak* convergent sequence in the dual space $X^*$ is also weakly convergent. We refer the reader to \cite{Kania} for an extensive survey on Grothendieck Banach spaces.

Typical examples of Grothendieck spaces are reflexive spaces, the space $\ell_\infty$, or, more generally, spaces $C(K)$ for $K$ extremally disconnected. On the other hand, spaces $C(K)$ for $K$ metric are never Grothendieck. R\"{a}biger \cite[Theorem 2.2]{Rab} proved that a Banach space $X$ is a Grothendieck space if and only if $X^*$ is weakly sequentially complete and $X$ has no quotient isomorphic to $c_0$. Since for every compact space $K$ the dual space $C(K)^*$ is always weakly sequentially complete, R\"{a}biger's theorem, with an aid of the results of Cembranos \cite{Cem84} and Schachermayer \cite{Sch82}, implies that a space $C(K)$ is not Grothendieck if and only if $C(K)$ has a quotient isomorphic to $c_0$ if and only if $C(K)$ contains a complemented copy of $c_0$. For spaces $\lip_0(M)$ the situation is different---each space $\lip_0(M)$ is isometrically isomorphic to the dual Banach space $\mathcal{F}(M)^*$, hence it cannot contain any complemented copy of $c_0$ (see e.g. \cite[Theorem 2.4.15]{DDLS}), even though we know by Theorem~\ref{cor1} that it contains \emph{some} copy of $c_{0}$. Consequently, to prove that a given space $\lip_0(M)$ is not Grothendieck, using R\"{a}biger's criterion, we really need either to look for a quotient of $\lip_0(M)$ isomorphic to $c_0$ or to show that the dual space $\lip_0(M)^*$ is not weakly sequentially complete. To achieve the former, the results obtained in the previous section together with the standard fact that $\ell_1$ has a quotient isomorphic to $c_0$ might be useful.

Since the Grothendieck property is preserved by continuous linear surjections, we immediately obtain that $\lip_0(E)$ does not have the Grothendieck property whenever~$E^*$ fails to have it (as, recall, $\lip_0(E)$ contains a complemented copy of $E^*$). We thus immediately get the following examples of non-Grothendieck spaces $\lip_0(E)$ with $E$ Banach. 
For the definitions of properties $(V)$ and $(V^*)$, see e.g. \cite{HWW}.

\begin{proposition}\label{prop:dual_not_groth}
  Let $E$ be a Banach space satisfying any the following assumptions:
  \begin{enumerate}
  \item $E$ has property~$(V)$ and is not Grothendieck,
  \item $E$ is non-reflexive and $E^*$ has property $(V^*)$,
  \item $E$ is non-reflexive and $E^*$ is separable.
  \end{enumerate}
  Then, $\lip_0(E)$ is not a Grothendieck space.
\end{proposition}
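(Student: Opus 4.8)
The strategy is to reduce the whole statement to the single assertion that in each of the three cases the \emph{dual} space $E^*$ fails to be a Grothendieck space. Granting this, the Proposition is immediate: as recalled just before Proposition~\ref{prop:lip0dualell1}, $\lip_0(E)$ contains a complemented isomorphic copy of $E^*$ (by \cite[Theorem~2]{Lindenstrauss} or \cite[Corollary~5.4]{Weaver}), so the associated projection is a continuous linear surjection of $\lip_0(E)$ onto a space isomorphic to $E^*$; since the Grothendieck property is preserved by continuous linear surjections, $\lip_0(E)$ cannot be Grothendieck. I also record at the outset that $E$ is non-reflexive in all three cases: this is part of the hypothesis in (2) and (3), and in (1) it follows since every reflexive space is Grothendieck, so ``not Grothendieck'' forces ``non-reflexive''.

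Cases (1) and (3) are short. For (1): since $E$ has property~$(V)$ and is non-reflexive, the identity operator on $E$ is not weakly compact, hence---by the definition of property~$(V)$, see \cite{HWW}---it is not unconditionally converging; thus $E$ carries a weakly unconditionally Cauchy series which is not unconditionally convergent, and by the Bessaga--Pe\l{}czy\'nski criterion $E$ contains an isomorphic copy of $c_0$. The restriction operator from $E^*$ onto the dual of that subspace, a space isomorphic to $c_0^*\simeq\ell_1$, is a continuous linear surjection; since $\ell_1$ is not Grothendieck (it has $c_0$ as a quotient---indeed every separable Banach space is a quotient of $\ell_1$---and $c_0$ is not Grothendieck), $E^*$ is not Grothendieck either. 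For (3): if $E^*$ is separable, then the closed unit ball of the bidual $E^{**}=(E^*)^*$ is weak*-metrizable and weak*-compact, hence weak*-sequentially compact; were $E^*$ Grothendieck, every weak*-convergent sequence in $(E^*)^*$ would be weakly convergent, so $B_{E^{**}}$ would be weakly sequentially compact and, by the Eberlein--\v{S}mulian theorem, weakly compact, forcing $E^{**}$ and hence $E$ to be reflexive---a contradiction. (Briefly: a separable Grothendieck space is reflexive.)

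Case (2) requires more care, and I expect it to be the main obstacle. Here I would argue by contradiction: suppose $E^*$ is Grothendieck. By R\"abiger's theorem \cite[Theorem~2.2]{Rab} applied to $E^*$, its dual $(E^*)^*=E^{**}$ is weakly sequentially complete, so in particular $E^{**}$ contains no isomorphic copy of $c_0$; by the Bessaga--Pe\l{}czy\'nski criterion every weakly unconditionally Cauchy series $\sum_n z_n^*$ in $E^{**}$ is then unconditionally convergent, whence $\|z_n^*\|\to 0$. Since $\sup_{\varphi\in B_{E^*}}|z_n^*(\varphi)|=\|z_n^*\|$, this says precisely that the closed unit ball $B_{E^*}$ is a $(V^*)$-set in $E^*$; as $E^*$ has property~$(V^*)$, $B_{E^*}$ is relatively weakly compact, hence weakly compact, so $E^*$---and therefore $E$---is reflexive, contradicting the standing assumption. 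Thus $E^*$ is not Grothendieck.

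The genuinely delicate point, as indicated, is case (2): one must pass to the bidual, use the ``$\Rightarrow$'' half of R\"abiger's characterisation of Grothendieck spaces, and then correctly unwind the definition of a $(V^*)$-set to recognise $B_{E^*}$ as one. Cases (1) and (3) reduce quickly to two classical facts---a space with property~$(V)$ is either reflexive or contains $c_0$, and a separable Grothendieck space is reflexive---and the final passage from ``$E^*$ not Grothendieck'' to ``$\lip_0(E)$ not Grothendieck'' is routine given the complementation result quoted above. I note that case (1) could also be handled without mentioning $E^*$ at all: once $c_0$ has been located inside $E$, since $c_0$ is a predual of $\ell_1$, Lemma~\ref{lemma:Lip0CKNotGP} already produces a continuous linear surjection of $\lip_0(E)$ onto $\ell_1$, and composing with a quotient map of $\ell_1$ onto $c_0$ exhibits $c_0$ as a quotient of $\lip_0(E)$.
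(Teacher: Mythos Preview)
Your proof is correct and follows exactly the paper's overall strategy: reduce to showing that $E^*$ fails the Grothendieck property in each case, then invoke the complementation of $E^*$ in $\lip_0(E)$. Case~(3) is identical to the paper's argument. In case~(1) the paper quotes the characterisation that a space with property~$(V)$ is Grothendieck if and only if it contains no \emph{complemented} copy of $c_0$, obtaining a complemented $\ell_1$ inside $E^*$; your route via the non-unconditionally-converging identity and Bessaga--Pe\l{}czy\'nski yields only a (possibly uncomplemented) copy of $c_0$, but your restriction-map argument then still produces $\ell_1$ as a \emph{quotient} of $E^*$, which is all that is needed. The only substantive difference is case~(2): the paper simply cites \cite[Corollary~3.3.C]{HWW}, namely that a non-reflexive space with property~$(V^*)$ contains a complemented copy of $\ell_1$, whereas you effectively re-prove a version of this fact from the definitions (via R\"abiger, weak sequential completeness of $E^{**}$, and the observation that $B_{E^*}$ becomes a $(V^*)$-set). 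Your argument is longer but self-contained; the paper's is a one-line citation.
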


\begin{proof}
  Since $\lip_0(E)$ contains a complemented copy of $E^*$ and the Grothendieck property is inherited by complemented subspaces, we only have to check that in each of the above cases $E^*$ is not Grothendieck.

(1) Recall that a Banach space with property (V) is a Grothendieck space if and only if it does not contain a complemented copy of $c_0$, see e.g.~\cite[Proposition~3.1.13]{Kania}. It follows that $E$ contains a complemented copy of $c_0$ and hence $E^*$ contains a complemented copy of $\ell_1$. This implies that $E^*$ is not Grothendieck.

(2) As $E$ is non-reflexive, $E^*$ is non-reflexive, too. By~\cite[Corollary~3.3.C]{HWW}, a non-reflexive space with property $(V^*)$ contains a complemented copy of $\ell_1$ and hence is not Grothendieck.

(3) $E^*$ is non-reflexive, because $E$ is non-reflexive. Since a separable Banach space is a Grothendieck space if and only if it is reflexive, $E^*$ is not Grothendieck.
\end{proof}

\begin{remark}\label{Re}
The space $\lip_0(M)$ for infinite $M$ has never property $(V^*)$, since, by \cite[Proposition~3.3.E]{HWW}, property $(V^*)$ is inherited by subspaces and, by Theorem~\ref{thm:linfLip}, $\lip_0(M)$ contains a copy of $\ell_\infty$, which does not have property $(V^*)$ (since non-reflexive spaces with property $(V^*)$ contain complemented copies of $\ell_1$, by \cite[Corollary~3.3.C]{HWW}).

Since every dual space with property $(V)$ has the Grothendieck property, the examples considered above of spaces $\lip_0(M)$ without the Grothendieck property are also examples of spaces $\lip_0(M)$ without property $(V)$.
\end{remark}

\begin{remark}\label{rem:linden}
Note that in the first two cases in Proposition~\ref{prop:dual_not_groth} we have chains of continuous linear surjections $\lip_0(E)\rightarrow E^*\rightarrow\ell_1\rightarrow c_0$, so in fact these two cases extend the list provided in Theorem~\ref{theorem:examples_onto_ell1}. In the third case we obtain \emph{a priori} only a chain of continuous operators $\lip_0(E)\xrightarrow{T} E^*\xrightarrow{S} c_0$, where $T$ is a projection and $S$ is non-weakly compact; whether there exists a continuous linear surjection $\lip_0(E)\rightarrow\ell_1$ is not clear in this case. It might happen that there is no such surjection but there still exists a continuous linear surjection $\lip_0(E)\rightarrow c_0$, not only a non-weakly compact operator. A possible example could be the space $E=Z^*$, where $Z$ is the separable Banach space constructed by Lindenstrauss in \cite{Lindjames} for $X=c_0$. Note that the third dual space $Z^{***}$ is also separable and hence there cannot exist a continuous linear surjection from the (bi)dual space $E^*=Z^{**}$ onto $\ell_1$, yet by the construction there exists a continuous linear surjection $E^*=Z^{**}\rightarrow c_0$.
\end{remark}

Results obtained in the first part of this section may be quickly used to obtain further sufficient conditions for a Banach space $E$ implying that the space $\lip_0(E)$ is not Grothendieck, thus completing Proposition~\ref{prop:dual_not_groth}.

\begin{theorem}
Let $E$ be a Banach space satisfying any of the following conditions:
\begin{enumerate}
	\item there is a continuous linear surjection $T\colon E^*\to\ell_1$,
	\item $E$ is separable and contains an isomorphic copy of a predual of $\ell_1$,
	\item $E$ contains a complemented copy of $\ell_1$.
\end{enumerate}
Then, $\lip_0(E)$ is not a Grothendieck space.
\end{theorem}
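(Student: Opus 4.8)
The plan is to reduce all three cases to one elementary observation: the Grothendieck property is inherited by complemented subspaces and preserved by continuous linear surjections, while $c_0$ is not a Grothendieck space and $\ell_1$ admits a (canonical) quotient isomorphic to $c_0$. The common starting point is that, for any Banach space $E$, the space $\lip_0(E)$ contains a $1$-complemented copy of $E^*$ (by \cite[Theorem~2]{Lindenstrauss} or \cite[Corollary~5.4]{Weaver}); in particular, composing the projection onto that subspace with the isomorphism identifying it with $E^*$ gives a continuous linear surjection $\lip_0(E)\to E^*$. Hence, as already noted before Proposition~\ref{prop:dual_not_groth}, it suffices in each case to exhibit a continuous linear surjection from $\lip_0(E)$ onto $c_0$.

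For (1), compose the surjection $\lip_0(E)\to E^*$ with the given $T\colon E^*\to\ell_1$ and with the canonical quotient map $\ell_1\to c_0$ to obtain a continuous linear surjection $\lip_0(E)\to c_0$; since $c_0$ is not Grothendieck and the Grothendieck property passes to continuous linear images, $\lip_0(E)$ is not Grothendieck. For (2), apply Lemma~\ref{lemma:Lip0CKNotGP}: since $E$ is separable and contains an isomorphic copy of a predual of $\ell_1$, there is a continuous linear surjection $\lip_0(E)\to\ell_1$, and composing with $\ell_1\to c_0$ brings us back to the previous situation. For (3), Corollary~\ref{cor:l1inXnoGrothendieck} shows that $\lip_0(E)$ contains a complemented copy of $\ell_1$; since $\ell_1$ is not Grothendieck (it surjects onto $c_0$) and the Grothendieck property is inherited by complemented subspaces, $\lip_0(E)$ is not Grothendieck. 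Equivalently, one may compose the projection onto this copy of $\ell_1$ with $\ell_1\to c_0$ to produce the required surjection directly.

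There is no genuine obstacle here; the argument is a bookkeeping exercise combining results already established in this section. The only point worth stating carefully is the one behind case (1), namely that a complemented copy of $E^*$ inside $\lip_0(E)$ is in particular a \emph{quotient} of $\lip_0(E)$ — which is immediate from the fact that a continuous linear projection onto a complemented subspace, followed by the identifying isomorphism, is a continuous linear surjection $\lip_0(E)\to E^*$. One should also remember to invoke the standard facts (recalled in the introduction and at the start of this subsection) that $c_0$ fails the Grothendieck property and that $\ell_1$ surjects linearly onto $c_0$, since these are what make the chains of surjections terminate correctly.

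Finally, it is worth observing, as in Remark~\ref{rem:linden}, that cases (1)--(3) in fact produce continuous linear surjections of $\lip_0(E)$ onto $\ell_1$ (not merely onto $c_0$), so they also extend the list of examples in Theorem~\ref{theorem:examples_onto_ell1}; this however is not needed for the statement and can be left as a remark rather than folded into the proof.
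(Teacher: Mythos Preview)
Your proof is correct and takes essentially the same approach as the paper, which simply cites Proposition~\ref{prop:lip0dualell1}, Lemma~\ref{lemma:Lip0CKNotGP}, and Corollary~\ref{cor:l1inXnoGrothendieck} for the three cases respectively. You have merely unpacked case~(1) explicitly (the content of Proposition~\ref{prop:lip0dualell1}) rather than citing it, but the underlying argument is identical.
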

\begin{proof}
(1) follows from Proposition~\ref{prop:lip0dualell1}, (2) from Lemma~\ref{lemma:Lip0CKNotGP}, and (3) from Corollary~\ref{cor:l1inXnoGrothendieck}.
\end{proof}

\begin{corollary}
If $E=c_0$ or $E=\ell_1$, then $\lip_0(E)$ is not a Grothendieck space.
\end{corollary}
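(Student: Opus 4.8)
The plan is to read off both cases directly from the preceding theorem, checking in each instance which of its three hypotheses is satisfied. No genuinely new argument is needed; the only facts invoked beyond the theorem are the classical identifications of duals of $c_0$ and $\ell_1$.

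First I would treat $E=c_0$. Here $c_0$ is separable and contains an isomorphic copy of a predual of $\ell_1$, namely $c_0$ itself, since $c_0^*\simeq\ell_1$ isometrically; thus hypothesis~(2) of the theorem holds and $\lip_0(c_0)$ is not Grothendieck. (Equivalently, $c_0^*\simeq\ell_1$ admits the identity map as a continuous linear surjection onto $\ell_1$, so hypothesis~(1) applies as well; either route gives the conclusion.)

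Next I would treat $E=\ell_1$. The space $\ell_1$ trivially contains a complemented copy of $\ell_1$ — it \emph{is} a complemented (improper) subspace of itself, the identity being a continuous linear projection — so hypothesis~(3) of the theorem is satisfied, whence $\lip_0(\ell_1)$ is not a Grothendieck space.

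There is no real obstacle here: the statement is an immediate specialization of the theorem, and the verification of its hypotheses in the two cases reduces to the well-known facts $c_0^*\simeq\ell_1$ and that every Banach space is complemented in itself. One could also phrase the $E=\ell_1$ case via the preceding section (Dalet's result that $\lip_0(\ell_1)$ contains a complemented copy of $\ell_1$, as in Corollary~\ref{cor:l1inXnoGrothendieck}), but invoking hypothesis~(3) of the theorem is the shortest path.
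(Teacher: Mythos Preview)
Your proposal is correct and matches the paper's intent exactly: the corollary is stated immediately after the theorem without a separate proof, so it is meant to be read off from hypotheses~(1)/(2) for $c_0$ and hypothesis~(3) for $\ell_1$, just as you do.
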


Theorem~\ref{thm:BiLipCK} immediately yields the following results.

\begin{theorem}\label{Fo}
Let $E$ be a separable Banach space which is an absolute Lipschitz retract and contains an isomorphic copy of $c_0$. If a metric space $M$ contains a bilipschitz copy of the unit sphere $S_{E}$ of $E$, then the space $\lip_0(M)$ is not a Grothendieck space.
\end{theorem}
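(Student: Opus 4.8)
The plan is to reduce the statement immediately to Theorem~\ref{thm:BiLipCK} together with the standard permanence properties of the Grothendieck property under quotients. First I would note that since $E$ contains an isomorphic copy of $c_0$, the space $E$ is infinite-dimensional, so its unit sphere $S_E$ is an infinite metric space and hence so is $M$; in particular $\lip_0(M)$ is an infinite-dimensional Banach space to which the question of the Grothendieck property applies non-trivially. Applying Theorem~\ref{thm:BiLipCK} to the pair $(E,M)$ — whose hypotheses are exactly those assumed here — yields a continuous linear surjection $T\colon\lip_0(M)\rightarrow\ell_1$.

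Next I would invoke the classical fact that $\ell_1$ has a quotient isomorphic to $c_0$ (indeed every separable Banach space is a quotient of $\ell_1$, so in particular there is a continuous linear surjection $Q\colon\ell_1\rightarrow c_0$). Composing, $Q\circ T\colon\lip_0(M)\rightarrow c_0$ is a continuous linear surjection. Since the Grothendieck property is preserved under continuous linear surjections and $c_0$ is not a Grothendieck space, it follows that $\lip_0(M)$ is not a Grothendieck space.

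There is essentially no obstacle intrinsic to this proof: all of the substance is contained in Theorem~\ref{thm:BiLipCK} (which in turn rests on the isomorphism $\lip_0(S_E)\simeq\lip_0(E)$ coming from \cite{C-K} via $E\simeq E\oplus\mathbb{R}$, on Lemma~\ref{lemma:Lip0CKNotGP} to produce the surjection onto $\ell_1$, and on Proposition~\ref{prop:Retract1} to transfer the conclusion along the bilipschitz embedding and the Lipschitz retraction of $E$ onto $S_E$). The only elementary point to spell out is the passage from a continuous linear surjection onto $\ell_1$ to one onto $c_0$, and from there the non-Grothendieck conclusion is immediate.
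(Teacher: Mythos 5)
Your proposal is correct and follows essentially the same route as the paper, which derives Theorem~\ref{Fo} immediately from Theorem~\ref{thm:BiLipCK} together with the preservation of the Grothendieck property under continuous linear surjections. Your extra composition with a quotient map $\ell_1\rightarrow c_0$ is harmless but unnecessary, since $\ell_1$ itself (being separable and non-reflexive) is already not a Grothendieck space.
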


\begin{corollary}\label{Gro}
If a metric space $M$ contains a bilipschitz copy of the unit sphere $S_{c_0}$ of $c_0$, then $\lip_0(M)$ is not a Grothendieck space.
\end{corollary}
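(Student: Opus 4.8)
The plan is to obtain Corollary~\ref{Gro} as the special case $E=c_0$ of Theorem~\ref{Fo}. First I would verify that $c_0$ meets every hypothesis of Theorem~\ref{Fo}: it is a separable Banach space, it trivially contains an isomorphic copy of $c_0$ (namely itself), and, by Lindenstrauss's classical result recalled above, $c_0$ is an absolute Lipschitz retract. Hence, whenever a metric space $M$ contains a bilipschitz copy of the unit sphere $S_{c_0}$, Theorem~\ref{Fo} applies verbatim and gives that $\lip_0(M)$ is not a Grothendieck space.

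For the reader's convenience I would also unwind the argument through Theorem~\ref{thm:BiLipCK}: under the stated hypothesis that theorem furnishes a continuous linear surjection $\lip_0(M)\to\ell_1$; composing it with a quotient map $\ell_1\to c_0$ (which exists since $c_0$ is separable and every separable Banach space is a quotient of $\ell_1$) produces a continuous linear surjection $\lip_0(M)\to c_0$. Since the Grothendieck property is preserved by continuous linear surjections and $c_0$ is not Grothendieck, $\lip_0(M)$ cannot be Grothendieck.

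There is essentially no obstacle at the level of this corollary: all the substance is carried by Theorem~\ref{thm:BiLipCK} (equivalently Theorem~\ref{Fo}), which in turn relies on the isomorphism $\lip_0(S_E)\simeq\lip_0(E)$ from \cite{C-K}, on $S_E$ being an absolute Lipschitz retract, and on Lemma~\ref{lemma:Lip0CKNotGP}. The only point to check here is that $c_0$ is a legitimate instance of the space $E$ appearing in Theorem~\ref{Fo}, which is immediate from the three observations in the first paragraph.
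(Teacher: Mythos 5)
Your proposal is correct and follows the paper's own route: the corollary is exactly the instance $E=c_0$ of Theorem~\ref{Fo}, with $c_0$ separable, containing itself, and an absolute Lipschitz retract by Lindenstrauss's result, and the underlying mechanism (a surjection $\lip_0(M)\to\ell_1$ from Theorem~\ref{thm:BiLipCK}, composed with a quotient onto $c_0$, plus preservation of the Grothendieck property under continuous linear surjections) is precisely the one the paper uses.
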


In Theorem~\ref{theorem:examples_onto_ell1} we have provided an extensive list of examples of Banach spaces $E$ such that the spaces $\lip_0(E)$ admit continuous operators onto $\ell_1$. Consequently, these spaces $\lip_0(E)$ do not have the Grothendieck property.

\begin{corollary}\label{cor:many_non_grothendieck_spaces}
If $E$ is an infinite-dimensional Banach space satisfying any of conditions (1)--(8) of Theorem~\ref{theorem:examples_onto_ell1}, then $\lip_0(E)$ is not a Grothendieck space.

In particular, if $E$ is a $C(K)$-space, $L_1(\mu)$-space, $\lip_0(M)$-space, or $\mathcal{F}(M)$-space, then $\lip_0(E)$ is not Grothendieck.
\end{corollary}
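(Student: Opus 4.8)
The plan is to deduce everything from Theorem~\ref{theorem:examples_onto_ell1} together with the fact, already used in the introduction and at the start of Subsection~\ref{sec:grothendieck}, that the Grothendieck property is preserved by continuous linear surjections. First I would observe that if $E$ satisfies any of the conditions (1)--(8) of Theorem~\ref{theorem:examples_onto_ell1}, then that theorem supplies a continuous linear surjection $T\colon\lip_0(E)\to\ell_1$. Fix once and for all a continuous linear surjection $q\colon\ell_1\to c_0$; such a map exists because it is classical that every separable Banach space, in particular $c_0$, is a quotient of $\ell_1$ (send the $n$-th unit vector of $\ell_1$ to the $n$-th term of a sequence dense in $B_{c_0}$). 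The composition $q\circ T\colon\lip_0(E)\to c_0$ is then a continuous linear surjection.

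Next I would invoke the preservation of the Grothendieck property under continuous linear surjections: if $S\colon X\to Y$ is such a surjection, then its adjoint $S^*\colon Y^*\to X^*$ is an isomorphic embedding which is simultaneously a weak*--weak* and a weak--weak homeomorphism onto its norm-closed range, so it carries a weak*-null sequence in $Y^*$ to a weak*-null sequence in $X^*$; the latter is weakly null whenever $X$ is Grothendieck, and then the original sequence is weakly null in $Y^*$. Applying this with $X=\lip_0(E)$ and $Y=c_0$, and using that $c_0$ is not a Grothendieck space, we conclude that $\lip_0(E)$ is not Grothendieck. This is precisely the route indicated at the start of Subsection~\ref{sec:grothendieck}, namely producing a quotient of $\lip_0(E)$ isomorphic to $c_0$, now made available by Theorem~\ref{theorem:examples_onto_ell1}.

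For the second assertion of the corollary I would simply match each listed class with one of the conditions of Theorem~\ref{theorem:examples_onto_ell1} in the case $n=0$: an infinite-dimensional $C(K)$-space with $K$ compact is $C_0(K)$, hence satisfies~(1); an infinite-dimensional $L_1(\mu)$-space over a $\sigma$-finite measure satisfies~(4); $\lip_0(M)$ for infinite $M$ satisfies~(7); and $\mathcal{F}(M)$ for infinite $M$ satisfies~(8). In each case the first part of the corollary applies.

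There is essentially no serious obstacle here, since the substantial content has already been established in Theorem~\ref{theorem:examples_onto_ell1}; the only points requiring a remark are the elementary observations that $c_0$ is a quotient of $\ell_1$ and that the Grothendieck property is inherited by quotients, both of which are standard and the latter already used elsewhere in the paper. One should only take the mild care of restricting the second assertion to infinite-dimensional members of each class, as finite-dimensional Banach spaces are reflexive and therefore trivially Grothendieck.
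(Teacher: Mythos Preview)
Your proposal is correct and follows essentially the same approach as the paper: the corollary is stated without proof, being immediate from Theorem~\ref{theorem:examples_onto_ell1} together with the preservation of the Grothendieck property under continuous linear surjections and the standard fact (explicitly noted at the start of Subsection~\ref{sec:grothendieck}) that $\ell_1$ has $c_0$ as a quotient. Your explicit spelling-out of the adjoint argument for preservation and the matching of each listed class to the corresponding item of Theorem~\ref{theorem:examples_onto_ell1} is more detailed than what the paper records, but the route is identical.
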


Proposition~\ref{prop:Retract1}, which was the main ingredient of the proof of Theorem~\ref{thm:BiLipCK} and hence also of several of the above results, has an immediate non-Grothendieck analogon as well.

\begin{proposition}\label{prop:Retract Grothendieck}
Let $N$ be a metric space and $M\subseteq N$. If $M$ is a Lipschitz retract of $N$, then:
\begin{enumerate}
  \item if $\lip_0(N)$ is weakly sequentially complete, so is $\lip_0(M)$,
  \item if there is a continuous linear surjection $\lip_0(M)\rightarrow c_0$, then there is one from $\lip_0(N)$ onto $c_0$, and
  \item if $\lip_0(N)$ has the Grothendieck property, so does $\lip_0(M)$.
  \end{enumerate}
  In particular, if $M$ is an absolute Lipschitz retract such that $\lip_0(M)$ does not have the Grothendieck property (e.g. $M=c_0$), then for every metric space $N$ containing a bilipschitz copy of $M$ the space $\lip_0(N)$ does not have the Grothendieck property.
\end{proposition}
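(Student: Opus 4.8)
The plan is to deduce all three items, and then the final ``in particular'' clause, from Proposition~\ref{prop:Retract1} and the two operators exhibited in its proof. Writing $r\colon N\to M$ for a Lipschitz retraction, these are the restriction operator $T\colon\lip_0(N)\to\lip_0(M)$, $g\mapsto g|_M$, which is a \emph{continuous linear surjection} (onto $\lip_0(M)$ by the McShane Extension Theorem), and the composition operator $J\colon\lip_0(M)\to\lip_0(N)$, $f\mapsto f\circ r$, which is an isomorphism onto its image with $JT$ a bounded linear projection of $\lip_0(N)$ onto $J[\lip_0(M)]$. Thus $\lip_0(M)$ is simultaneously a quotient of $\lip_0(N)$ (via $T$) and isomorphic to a complemented---hence norm-closed---subspace of $\lip_0(N)$ (via $J$). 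Each item is then obtained by feeding the appropriate one of these two facts into a standard permanence property.

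For (1) I would argue that weak sequential completeness passes to norm-closed subspaces: a weakly Cauchy sequence in the closed subspace $J[\lip_0(M)]$ is weakly Cauchy in $\lip_0(N)$ (restrict functionals via Hahn--Banach), hence weakly convergent in $\lip_0(N)$, and its limit lies in $J[\lip_0(M)]$ since a norm-closed convex set is weakly closed; pulling back along the isomorphism $J$ gives that $\lip_0(M)$ is weakly sequentially complete. For (2), if $Q\colon\lip_0(M)\to c_0$ is a continuous linear surjection, then $Q\circ T\colon\lip_0(N)\to c_0$ is again a continuous linear surjection. For (3) I would simply invoke the fact, used repeatedly in this paper, that the Grothendieck property is preserved by continuous linear surjections: since $T$ is such a surjection, $\lip_0(N)$ Grothendieck forces $\lip_0(M)$ Grothendieck (alternatively, (3) also follows by combining (1), (2), and R\"{a}biger's criterion).

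For the final statement, suppose $M$ is an absolute Lipschitz retract with $\lip_0(M)$ not Grothendieck and let $N$ contain a bilipschitz copy $M'$ of $M$. As recalled before Proposition~\ref{prop:Retract1}, a bilipschitz copy of an absolute Lipschitz retract is an absolute Lipschitz retract, so $M'$ is a Lipschitz retract of $N$; moreover a surjective bilipschitz map $M\to M'$ induces a Banach-space isomorphism $\lip_0(M')\simeq\lip_0(M)$, so $\lip_0(M')$ is not Grothendieck. The contrapositive of (3), applied to $M'\subseteq N$, then yields that $\lip_0(N)$ is not Grothendieck. Finally, $c_0$ qualifies because it is a separable absolute Lipschitz retract (Lindenstrauss) and $\lip_0(c_0)$ admits a continuous linear surjection onto $\ell_1$ (Proposition~\ref{prop:lip0dualell1}), hence onto $c_0$, and is therefore not a Grothendieck space.

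I do not expect a genuine obstacle: everything is a matter of assembling known permanence properties. The one point requiring care is that item (1) must be run through the \emph{complemented-subspace} structure supplied by $J$ and \emph{not} through the quotient map $T$, since weak sequential completeness---unlike the Grothendieck property and the existence of a $c_0$-quotient---is not inherited by arbitrary quotients, the canonical surjection of $\ell_1$ onto $c_0$ being the standard counterexample.
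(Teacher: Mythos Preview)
Your proposal is correct and matches the paper's approach: the paper gives no explicit proof, introducing the proposition simply as an ``immediate non-Grothendieck analogon'' of Proposition~\ref{prop:Retract1}, and your argument does exactly this---reading off (1)--(3) and the final clause from the complemented-subspace and quotient structure supplied by the operators $J$ and $T$ in that proof. Your closing remark that (1) must use the embedding $J$ rather than the quotient $T$ is a worthwhile clarification that the paper leaves implicit.
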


By \cite[Theorem 2.2]{Rab}, if the dual $E^*$ of a Banach space $E$ is not weakly sequentially complete, then $E$ is not a Grothendieck space. Using this theorem and the lifting property of separable Banach spaces, we make the following observation (cf. \cite[Remark 3.6]{C-C-D}), which can be applied to the James space $\mathcal{J}$.

\begin{proposition}\label{prop:wsc}
  Let $E$ be a separable Banach space which is not weakly sequentially complete. Then, $\lip_0(E)$ is not a Grothendieck space.
\end{proposition}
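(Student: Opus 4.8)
The plan is to reduce the statement, via R\"{a}biger's criterion, to a question about weak sequential completeness of a bidual. By \cite[Theorem~2.2]{Rab}, recalled just above, it suffices to show that the dual Banach space $\lip_0(E)^*$ is not weakly sequentially complete in order to conclude that $\lip_0(E)$ is not a Grothendieck space. Since $\lip_0(E)$ is isometrically isomorphic to $\mathcal{F}(E)^*$, its dual is precisely the bidual $\mathcal{F}(E)^{**}$, so the whole task boils down to checking that $\mathcal{F}(E)^{**}$ fails weak sequential completeness.

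Here is where the separability of $E$ enters. As $E$ is separable, it has the lifting property \cite[Theorem~3.1]{GK}, and therefore $E$ embeds linearly into $\mathcal{F}(E)$ as a norm-closed subspace, exactly as \cite[Theorem~2.12]{GK} is applied in the proof of Lemma~\ref{lemma:Lip0CKNotGP}. Denoting such an embedding by $\iota\colon E\hookrightarrow\mathcal{F}(E)$, its bidual $\iota^{**}\colon E^{**}\to\mathcal{F}(E)^{**}$ is again a linear isomorphic embedding (the bidual of an into-isomorphism is an into-isomorphism), so $\lip_0(E)^*=\mathcal{F}(E)^{**}$ contains a norm-closed subspace linearly isomorphic to $E^{**}$. (Alternatively, and without even using separability, one may note that $E^*$ is $1$-complemented in $\lip_0(E)$ by \cite[Corollary~5.4]{Weaver}, whence $E^{**}$ is a complemented, hence closed, subspace of $\lip_0(E)^*$; either route produces a closed copy of $E^{**}$ inside $\lip_0(E)^*$.)

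It then remains to observe that this forces $\lip_0(E)^*$ to be non-weakly-sequentially-complete. Weak sequential completeness passes to norm-closed subspaces (a weakly Cauchy sequence in a closed subspace is weakly Cauchy in the big space, hence weakly convergent there, and its limit lies in the subspace since the latter, being norm-closed and convex, is weakly closed). Now $E$ is a norm-closed subspace of its bidual $E^{**}$, so the hypothesis that $E$ is not weakly sequentially complete yields that $E^{**}$ is not weakly sequentially complete; and since $E^{**}$ embeds as a closed subspace of $\lip_0(E)^*$, the same holds for $\lip_0(E)^*$. An application of \cite[Theorem~2.2]{Rab} then gives that $\lip_0(E)$ is not a Grothendieck space.

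I do not expect any genuine obstacle: the argument is short and every step is standard. The only points requiring a little care are the identification $\lip_0(E)^*=\mathcal{F}(E)^{**}$ (immediate from $\lip_0(E)$ being a dual of $\mathcal{F}(E)$), the fact that bidualising an into-isomorphism (or a projection) again yields an into-isomorphism (resp.\ a projection), and the elementary inheritance of weak sequential completeness by norm-closed subspaces together with the triviality that $E$ is norm-closed in $E^{**}$. The one external ingredient beyond these is the embedding of a separable Banach space into its Lipschitz-free space, which is due to Godefroy and Kalton and is already used elsewhere in the paper.
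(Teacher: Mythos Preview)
Your proposal is correct and follows essentially the same approach as the paper: use the Godefroy--Kalton lifting property to embed $E$ as a closed subspace of $\mathcal{F}(E)$, identify $\lip_0(E)^*$ with $\mathcal{F}(E)^{**}$, deduce that the latter is not weakly sequentially complete, and apply R\"{a}biger's criterion. The only cosmetic difference is that the paper transfers the failure of weak sequential completeness directly along the chain $E\subseteq\mathcal{F}(E)\subseteq\mathcal{F}(E)^{**}$, whereas you bidualise the embedding first to place $E^{**}$ inside $\mathcal{F}(E)^{**}$ and then use $E\subseteq E^{**}$; both routes are equivalent, and your alternative via the complemented copy of $E^*$ in $\lip_0(E)$ is a legitimate shortcut as well.
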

\begin{proof}
Since separable spaces have the lifting property (see \cite[Theorem~3.1]{GK}), the Lipschitz-free space $\mathcal{F}(E)$ contains $E$ as a closed linear subspace (by \cite[Theorem 2.12]{GK}), and hence $\lip_0(E)^*\simeq\mathcal{F}(E)^{**}$ cannot be weakly sequentially complete. Consequently, $\lip_0(E)$ does not have the Grothendieck property.
\end{proof}

Recall that the James space $\mathcal{J}$ is an example of a non-reflexive quasi-reflexive separable infinite-dimensional Banach space (see \cite[Section 3.4]{AK}).

\begin{corollary}\label{cor:james}
$\lip_0(\mathcal{J})$ is not Grothendieck.
\end{corollary}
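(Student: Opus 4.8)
The plan is to deduce the statement immediately from Proposition~\ref{prop:wsc}, which asserts that $\lip_0(E)$ fails the Grothendieck property whenever $E$ is a separable Banach space that is not weakly sequentially complete. Thus the only thing to check is that the James space $\mathcal{J}$ meets these two hypotheses. Separability is part of the definition of $\mathcal{J}$ (see \cite[Section 3.4]{AK}), so the real task is to record why $\mathcal{J}$ is not weakly sequentially complete; this is classical, but I would spell it out for completeness.

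For the failure of weak sequential completeness I would use the standard structure of $\mathcal{J}$: the canonical unit vectors $(e_n)_{n=1}^{\infty}$ form a monotone Schauder basis, the partial sums $s_n=e_1+\dots+e_n$ have $\|s_n\|_{\mathcal{J}}=1$ for all $n$, and the sequence $(s_n)$ is weakly Cauchy. However $(s_n)$ has no weak limit in $\mathcal{J}$: since $\mathcal{J}$ is quasi-reflexive of order one, $(s_n)$ converges in the weak$^*$ topology of $\mathcal{J}^{**}$ to a functional $s^{**}$ which, together with the canonical image of $\mathcal{J}$, spans $\mathcal{J}^{**}$, and a weak limit in $\mathcal{J}$ would have to coincide with $s^{**}\notin\mathcal{J}$. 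Equivalently, one can argue abstractly: any separable, non-reflexive, quasi-reflexive Banach space is not weakly sequentially complete, because picking $x^{**}\in\mathcal{J}^{**}\setminus\mathcal{J}$ and using weak$^*$ metrizability of bounded subsets of $\mathcal{J}^{**}$ (by separability) together with Goldstine's theorem produces a bounded sequence in $\mathcal{J}$ that is weakly Cauchy but has no weak limit in $\mathcal{J}$.

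Having verified both hypotheses, Proposition~\ref{prop:wsc} applies verbatim and yields that $\lip_0(\mathcal{J})$ is not a Grothendieck space. There is no genuine obstacle here: the entire content of the corollary is carried by Proposition~\ref{prop:wsc} (hence ultimately by the lifting property of separable Banach spaces, \cite[Theorems 2.12 and 3.1]{GK}, and R\"abiger's criterion \cite[Theorem 2.2]{Rab}); the only care needed is to cite correctly the well-known fact that $\mathcal{J}$, being non-reflexive and quasi-reflexive, fails weak sequential completeness.
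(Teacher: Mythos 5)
Your proof is correct and follows essentially the same route as the paper: the paper's own argument also applies Proposition~\ref{prop:wsc} to $\mathcal{J}$, verifying the failure of weak sequential completeness via quasi-reflexivity --- though it does so by observing that $\mathcal{J}$ contains no copy of $\ell_1$ and invoking Rosenthal's $\ell_1$-theorem, rather than exhibiting the weakly Cauchy sequence of partial sums of the basis (or the Goldstine argument) as you do; both are standard and valid. The paper additionally records a second, even shorter justification you did not use: since $\mathcal{J}^*$ is separable and $\mathcal{J}$ is non-reflexive, Proposition~\ref{prop:dual_not_groth} applies directly.
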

\begin{proof}
The statement can be justified twofold. First, by the quasi-reflexivity, the dual space $\mathcal{J}^*$ is also separable, hence, by Proposition \ref{prop:dual_not_groth}, $\lip_0(\mathcal{J})$ is not Grothendieck. Second, again by the quasi-reflexivity, $\mathcal{J}$ does not contain any copies of $c_0$ or $\ell_1$. Consequently, by the non-reflexivity and Rosenthal's $\ell_1$-theorem, $\mathcal{J}$ is not weakly sequentially complete. Thus, Proposition \ref{prop:wsc} applies to $\mathcal{J}$, too.
\end{proof}

\begin{remark}\label{rem:ccd}
Let us mention that the argument presented in the proof of Proposition \ref{prop:wsc} was first used in \cite[Remark 3.6]{C-C-D} to show that for any net $\mathcal{N}$ in the Banach space $c_0$ the space $\lip_0(\mathcal{N})$ is not Grothendieck.
\end{remark}

Summing up the above results, we have the following cases of metric space $M$ for which the space $\lip_0(M)$ does not have the Grothendieck property:
\begin{enumerate}
\item $M$ is a separable Banach space which is not weakly sequentially complete, e.g. $M$ is the James space $\mathcal{J}$;
\item $M$ contains a bilipschitz copy of the unit sphere $S_{c_0}$, e.g. $M=C(K)$ for some infinite compact space $K$;
\item $M=\bigsqcup_{n\in\mathbb{N}} \ell_\infty^n$;
\item $M$ is a net in $c_0$ or $\ell_1$.
\end{enumerate}

An infinite dimensional Banach space for which the space of Lipschitz functions does not have the Grothendieck property, but neither (1) nor (2) above is satisfied, is~$\ell_1$. By \cite[Proposition~79]{Dalet2} the space $\ell_1$ is complemented in~$\lip_0(\ell_1)$ and hence the latter space cannot have the Grothendieck property. Since $\ell_1$ has the Schur property, it is weakly sequentially complete, so (1) is not satisfied. In order to prove that (2) is not satisfied as well, we use that by the corollary to ~\cite[Theorem~5.1]{Raynaud} there is no uniform and hence no bilipschitz embedding of $S_{c_0}$ into a stable Banach space and that a direct consequence of \cite[Theorem~0.1]{Raynaud} is that $\ell_1$ is stable\footnote{The authors would like to thank Bunyamin Sari for pointing out Raynaud's paper to them.}.

\section{Open problems and final remarks\label{sec:problems}}

In this final section of our paper we provide several open questions concerning the existence of continuous surjections from spaces of Lipschitz functions.

We start with the following problem related to Theorem~\ref{theorem_lipp_onto_lipw} and Proposition~\ref{prop_lipp_seq_onto_lipw}.

\begin{problem}
  Assume that  metric spaces $M$ and $N$ have the same density.
  Are the spaces $\lip_0(M)_w$ and   $\lip_0(N)_w$  homeomorphic?
\end{problem}

Theorem~\ref{theorem_cpm_homeo_fmw} provides several criteria implying that, for given infinite metric spaces $M$ and $N$, the spaces $C_p(M)$ and $\mathcal{F}(N)_w$ are not homeomorphic. We are, however, not aware of any examples of spaces $M$ and $N$ such that $C_p(M)$ and $\mathcal{F}(M)_w$ \emph{are} homeomorphic. Conditions (1)--(6) in Theorem~\ref{theorem_cpm_homeo_fmw} imply that if such spaces $M$ and $N$ do exist, then they both must be separable, $M$ must be non-scattered $\sigma$-compact and of cardinality $2^{\aleph_0}$, and $\mathcal{F}(N)$ cannot have the Schur property (which, by \cite[Theorem C]{AGPP}, is equivalent for the completion of $N$ to contain a bilipschitz copy of a subset of $\mathbb{R}$ of non-zero Lebesgue measure), nor it can be embeddable into a separable $L_1(\mu)$-space. Natural candidates could be then, e.g., $M=N=\mathbb{R}^2$.

\begin{problem}\label{LL}
  Do there exist infinite metric spaces $M$ and $N$ such that the spaces $C_{p}(M)$ and  $\mathcal{F}(N)_w$ are homeomorphic?
\end{problem}

In Section~\ref{sec:ell_1} we studied for which infinite metric spaces $M$ their spaces $\lip_0(M)$ admit continuous linear surjections onto the space $\ell_1$ or the space $c_0$. It is well-known that for $M=\mathbb{R}$ such mappings do not exist, as $\lip_0(M)\simeq\ell_\infty$ in this case. Unfortunately, we do not know any examples of Banach spaces $E$ of dimension at least $2$ for which such operators from $\lip_0(E)$ do not exist (cf. also Remark \ref{rem:linden}).

\begin{problem}\label{c_0quotient}
Does there exist a Banach space $E$ of dimension at least $2$ and such that the space $\lip_0(E)$ admits no continuous linear  surjection onto $\ell_1$? Can such $\lip_0(E)$ still admit a continuous linear surjection onto $c_0$?
\end{problem}

We also do not know any example of an infinite-dimensional Banach space $E$ such that the space $\lip_0(E)$ has the Grothendieck property.

\begin{problem}\label{banach_gr_problem}
  Is there an infinite-dimensional Banach space $E$ for which the space $\lip_0(E)$ is a Grothendieck space?
\end{problem}

Note that a positive answer to Problem \ref{banach_gr_problem} would also give a positive answer to the first question in Problem \ref{c_0quotient}, as the Grothendieck property is preserved by quotients.

Natural variants of Problem~\ref{banach_gr_problem} are: 1) Is there a reflexive Banach space $X$ such that $\lip_0(X)$ is Grothendieck? 2) Is the space $\lip_0(\ell_2)$ Grothendieck? 3) Is the space $\lip_0(\mathbb{R}^2)$ Grothendieck? Let us discuss briefly the latter two questions as both positive and negative answers to them would have several interesting consequences.
\begin{itemize}
   \item If $\lip_0(\ell_2)$ is a Grothendieck space, then for any $d\in\mathbb{N}$ the space $\lip_0(\mathbb{R}^d)$ is Grothendieck as well, since $\mathbb{R}^d$ is a Lipschitz retract of $\ell_2$. This would answer \cite[Question 9]{C-C-D}.
   \item If $\lip_0(\ell_2)$ is a Grothendieck space, then we have an interesting situation, since by Corollary \ref{cor:james} for the James space $\mathcal{J}$ the space $\lip_0(\mathcal{J})$ is not Grothendieck and it is known that $\mathcal{J}$ is $\ell_2$-saturated, that is, each of its closed infinite-dimensional linear subspaces contains an isomorphic copy of the space $\ell_2$ (see \cite[Lemma 1]{HW67}).
   \item If $\lip_0(\ell_2)$ is not Grothendieck, then for any separable Banach space $E$ with non-trivial type (e.g. $E=L_p([0,1]$) for $1<p<\infty$) the space $\lip_0(E)$ is not Grothendieck. This follows from \cite[Theorem 3.1]{C-C-D} which asserts that for such $E$ the space $\lip_0(E)$ contains a complemented copy of $\lip_0(\ell_2)$.
  \item If $\lip_0(\ell_2)$ is not Grothendieck and one can find a separable infinite-dimensional Banach space $E$ such that $\lip_0(E)$ is Grothendieck, then this would answer in negative \cite[Question~7]{C-C-D}, asking whether $\lip_0(\ell_2)$ is complemented in $\lip_0(F)$ for every separable infinite-dimensional Banach space $F$.
  \item Recall that the space $\lip_0(\mathbb{R}^2)$ is isomorphic to the space $\lip_0([0,1]^2)$ (see \cite[Corollary 3.5]{Kaufmann}), where the unit square $[0,1]^2$ carries the metric induced by the maximum norm on $\mathbb{R}^2$ and is an absolute Lipschitz retract. Consequently, if $\lip_0(\mathbb{R}^2)$ is not Grothendieck, then for every metric space $M$ containing a bilipschitz copy of $[0,1]^2$ the space $\lip_0(M)$ does not have the Grothendieck property.
 \end{itemize}

\begin{remark}
For any $n\ge1$, the space $\lip_0(\mathbb{R}^{n})^*$ cannot have the Grothendieck property, since, by \cite[Corollary~1.2]{C-K-K}, it contains $\mathcal{F}(\mathbb{R}^{n})$ as a complemented subspace and hence, by Corollary \ref{hano_ell1}, also a complemented copy of $\ell_1$.
\end{remark}

We finish the paper with the following important and well-known question (cf. \cite[Introduction]{LeandroGuzman}). Note that a positive answer to Problem~\ref{c_0quotient} or Problem~\ref{banach_gr_problem} would provide a positive answer to Problem~\ref{non_isomorph_lip0} as well.

\begin{problem}\label{non_isomorph_lip0}
Do there exist two infinite-dimensional Banach spaces $E$ and $F$ having the same density and such that $\lip_0(E)$ and $\lip_0(F)$ are not isomorphic?
\end{problem}

By the fact that for every Banach space $E$ the space $\lip_0(E)$ contains a complemented copy of the dual space $E^*$, if for every two infinite-dimensional Banach spaces $E$ and $F$ of the same density the spaces $\lip_0(E)$ and $\lip_0(F)$ were isomorphic, then we would have the following interesting situation: given an infinite-dimensional Banach space $E$ of density $\kappa$, the space $\lip_0(E)$ would contain a complemented copy of the dual space $F^*$ for every infinite-dimensional Banach space $F$ of density $\le\kappa$. In particular, the space $\lip_0(c_0)$ would contain a complemented copy of the dual space $F^*$ for every separable Banach space $F$, which would provide a simple example of a Banach space with such a property and a separable predual. Let us note that such Banach spaces have been already constructed, though they are non-trivial, see  e.g. \cite[Theorem 1]{Johnson}.

\section*{Acknowledgements}
The authors wish to thank the anonymous referee for carefully reading the paper and for a number of valuable suggestions.

The first and the third named authors were supported by the Austrian Science Fund (FWF):~I~4570-N. The third named author was also supported by the Austrian Science Fund (FWF):~ESP~108-N.

This research was funded in whole or in part by the Austrian Science Fund (FWF), grant DOI: 10.55776/ESP108, 10.55776/I4570. For open access purposes, the authors have applied a CC BY public copyright license to any author-accepted manuscript version arising from this submission.

\section{Statements and Declarations}

\noindent\textbf{Funding.} The first and the third named authors were supported by the Austrian Science Fund~(FWF): I~4570-N. The third named author was also supported by the Austrian Science Fund~(FWF): ESP~108-N. 

This research was funded in whole or in part by the Austrian Science Fund (FWF), grant DOIs: 10.55776/ESP108, 10.55776/I4570. For open access purposes, the authors have applied a CC BY public copyright license to any author-accepted manuscript version arising from this submission.

\noindent\textbf{Competing Interests.} The authors have no relevant financial or non-financial interests to disclose.

\noindent\textbf{Author Contributions.} All authors contributed equally and to all parts of the article.

\noindent\textbf{Data availability statement.} Data sharing not applicable to this article as no datasets were generated or analysed during the current study.

\end{document}